\definecolor{citegreen}{rgb}{0,0.6,0}
\definecolor{refred}{rgb}{0.8,0,0}
\title{Bubble-Tree Convergence and Local Diffeomorphism Finiteness for Gradient Ricci Shrinkers}
\author{Reto Buzano and Louis Yudowitz}
\date{}
\newtheorem{theorem}{Theorem}[section]
\newtheorem{lemma}[theorem]{Lemma}
\newtheorem{corollary}[theorem]{Corollary}
\newtheorem{proposition}[theorem]{Proposition}
\theoremstyle{definition}
\newtheorem{remark}[theorem]{Remark}
\newtheorem{definition}[theorem]{Definition}
\numberwithin{equation}{section}
\newtheorem{claim}[theorem]{Claim}
\newcolumntype{P}[1]{>{\centering\arraybackslash}p{#1}}
\newcommand{\R}{\mathbb R}
\newcommand{\eps}{\varepsilon}
\newcommand{\Lap}{\Delta}
\newcommand{\sW}{\mathcal{W}}
\newcommand{\sQ}{\mathcal{Q}}
\newcommand{\Sc}{\mathrm{R}}
\newcommand{\Ric}{\mathrm{Ric}}
\newcommand{\Rm}{\mathrm{Rm}}
\providecommand{\abs}[1]{\left| #1\right|}
\providecommand{\norm}[1]{\left|\left|#1\right|\right|}
\providecommand{\scal}[1]{\left\langle #1\right\rangle}
\providecommand{\wt}[1]{\widetilde{#1}}
\let \div \relax
\DeclareMathOperator{\Vol}{Vol}
\DeclareMathOperator{\id}{id}
\DeclareMathOperator{\div}{div}
\DeclareMathOperator*{\argmin}{arg\,min}
\newcommand\printaddress{{
\setlength{\parindent}{15pt}
\footnotesize~
\par
{\scshape Reto Buzano}
\newline 
Universit\`a degli Studi di Torino, 
Dipartimento di Matematica,
Via Carlo Alberto 10, 
10123 Torino, Italy 
\newline
\emph{E-mail address:} 
\texttt{reto.buzano@unito.it}
\par
{\scshape Louis Yudowitz}
\newline 
Queen Mary University of London, 
School of Mathematical Sciences, 
Mile End Road, 
London E1 4NS, UK
\newline
\emph{E-mail address:} 
\texttt{l.yudowitz@qmul.ac.uk}
\par
}}
\begin{document}
\maketitle
\begin{abstract}
We prove bubble-tree convergence of sequences of gradient Ricci shrinkers with uniformly bounded entropy and uniform local energy bounds, refining the compactness theory of Haslhofer--M\"uller \cite{HM11_comp1,HM15_comp2}. In particular, we show that no energy concentrates in neck regions, a result which implies a local energy identity for the sequence. Direct consequences of these results are an identity for the Euler characteristic and a local diffeomorphism finiteness theorem.
\end{abstract}


\section{Introduction and Main Results}
In this paper, we refine the compactness theory for gradient Ricci shrinkers in general dimensions. A smooth, connected, complete, $n$-dimensional Riemannian manifold $(M^n,g)$ is called a \emph{gradient Ricci shrinker} if there exists a function $f:M\to\R$, called the \emph{potential} of the shrinker, such that
\begin{equation}\label{eq.shrinker}
\Ric_g + \nabla^2_g f = \tfrac{1}{2}g.
\end{equation} 
This notion, introduced by Hamilton in \cite{Ha88_surf}, naturally generalises the concept of positive Einstein manifolds (satisfying \eqref{eq.shrinker} with $f\equiv const.$). Gradient shrinkers have been very heavily studied, particularly in the last two decades. It is not hard to see that \eqref{eq.shrinker} is equivalent to $g(t):=(1-t)\phi_t^*{g}$ satisfying Hamilton's Ricci flow equation
\begin{equation*}
\partial_t g(t) = -2\Ric_{g(t)},
\end{equation*}
where $\phi_t$ is the family of diffeomorphisms generated by $(1-t)^{-1}\nabla f$ with $\phi_0=\id_M$. That is, a gradient shrinker evolves under Ricci flow only by diffeomorphisms and scaling and becomes singular at time $t=1$. Hence, gradient shrinkers yield some of the most basic examples of singular Ricci flows.

Their importance however stems from the fact that gradient shrinkers model finite time singularities of the Ricci flow. As shown by Enders, Topping, and the first author \cite{EMT11_typeI}, for a so-called Type~I Ricci flow $(M,g(t))_{t\in [0,T)}$, a sequence of parabolic rescalings $\big(M,g_j(t):= \lambda^{-2}_j g\big(T + \lambda^2_j(t-1)\big),p\big)$ with scaling factors $\lambda_j \to 0$ will subconverge smoothly in the pointed Cheeger--Gromov sense to a gradient shrinker which is non-trivial (i.e.~non-flat) if and only if $p$ is a singular point. We also refer the interested reader to an earlier result of Naber \cite{Na10_solitons} (without the non-triviality statement) and the work of Mantegazza and the first author \cite{MM15_typeI} for an alternative proof which yields additional information about the entropy of the limiting gradient shrinker. Recently, in his spectacular trilogy \cite{Ba20_compactness, Ba20_nash, Ba20_structure}, Bamler generalised this blow-up result to the case of general Ricci flows without the Type~I assumption. Instead, one must work with a new concept of weak convergence and limiting gradient shrinkers that may have a co-dimension 4 singular set. In the special case of dimension $n=4$, Bamler has shown that one obtains \emph{orbifold} Cheeger--Gromov convergence to an \emph{orbifold Ricci shrinker} with isolated singularities modelled on $\mathbb{R}^n/\Gamma$ for some finite $\Gamma \subset O\left(n\right)$. This yields a parabolic version of the ($4$-dimensional) shrinker compactness result by Haslhofer and the first author \cite{HM11_comp1,HM15_comp2} which we will recall now.

It is now well known (see \cite{HM11_comp1}) that every gradient shrinker comes with a natural basepoint, namely a point $p := \argmin_M f$ where the potential attains its minimum. Such a point always exists and the distance between two such points is bounded by a constant depending only on the dimension. From $p$, the potential grows like one-quarter distance squared and the volume growth of geodesic balls around $p$ is at most Euclidean, see Section \ref{sec_blowup} for more details. It is therefore always possible to normalise $f$ by adding a constant so that
\begin{equation}\label{eq.normalisation}
\int_M \left(4\pi\right)^{-\frac{n}{2}}e^{-f}dV_g = 1.
\end{equation}
In this article, we always assume that the potential has been normalised this way. The gradient shrinker then has a well defined \emph{entropy},
\begin{equation*}
\mu\left(g\right)= \sW\left(g,f\right) = \int_M\left(\abs{\nabla f}_g^2 + \Sc_g + f -n\right)\left(4\pi\right)^{-\frac{n}{2}}e^{-f}dV_g>-\infty.
\end{equation*}
The entropy, introduced by Perelman in \cite{Pe02_entropy}, is non-decreasing along a general Ricci flow (in the compact case or under some technical assumptions) and assuming a lower bound for the entropy of singularity models is therefore quite natural. An additional local scalar curvature bound, which is always available for gradient shrinkers, implies no local-collapsing.

The main \emph{compactness theorem for $n$-dimensional Ricci shrinkers} from \cite{HM11_comp1} (and its improvement from \cite{HM15_comp2} that shows the condition \eqref{eq.energybounds} always holds in dimension $n=4$) then states the following.
\begin{theorem}[Theorem 1.1 in \cite{HM11_comp1} and Theorem 1.1 in \cite{HM15_comp2}] \label{hm_compactness_thm}
Let $n\geq 4$ and let $(M_i,g_i,f_i)$ be a sequence of $n$-dimensional gradient Ricci shrinkers with entropy uniformly bounded below $\mu(g_i)\geq\underline{\mu}>-\infty$. If $n>4$, then assume in addition that we have uniform local energy bounds,
\begin{equation}\label{eq.energybounds}
\int_{B_{g_i}(p_i,r)} \abs{\Rm_{g_i}}^{n/2}_{g_i} dV_{g_i} \leq E\left(r\right) < \infty, \quad \forall i,r.
\end{equation}
Then $\left(M_i,g_i,f_i,p_i\right)$ subconverges to an orbifold Ricci shrinker $(M_\infty,g_\infty,f_\infty,p_\infty)$ in the pointed orbifold Cheeger--Gromov sense where $p_i := \argmin_{M_i} f_i$. 
\end{theorem}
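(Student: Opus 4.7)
The proof follows the orbifold Cheeger--Gromov framework. I would first extract uniform geometric data from the shrinker structure and the entropy bound: the trace identities $\Sc_{g_i}+\Lap_{g_i} f_i = \tfrac{n}{2}$ and $\abs{\nabla f_i}_{g_i}^2+\Sc_{g_i} = f_i$ (after the natural constant shift) force $f_i$ to grow like one quarter of the squared distance to $p_i$, giving uniform polynomial bounds on $f_i$ and $\nabla f_i$ and uniform Euclidean-type volume upper bounds for the balls $B_{g_i}(p_i,r)$. Combined with Perelman's $\kappa$-noncollapsing coming from $\mu(g_i)\geq \underline{\mu}$, this provides uniform lower injectivity radius bounds on any ball with bounded curvature.

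The heart of the argument is an $\eps$-regularity theorem: there is $\eps_0=\eps_0(n,\underline{\mu})>0$ such that whenever $\int_{B_{g_i}(x,r)}\abs{\Rm_{g_i}}^{n/2}\,dV_{g_i}<\eps_0$ on a ball with controlled geometry, then $\sup_{B_{g_i}(x,r/2)}\abs{\Rm_{g_i}}\leq Cr^{-2}$. I would prove this by Moser iteration on a Simons-type identity for $\abs{\Rm}^2$ derived from the shrinker equation, using the Sobolev inequality supplied by noncollapsing. The local energy bound \eqref{eq.energybounds} then limits the number of \emph{bad points} in $B_{g_i}(p_i,r)$ to at most $E(r)/\eps_0$, and after passing to a subsequence these converge in Hausdorff distance to a finite set $S_\infty$. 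Away from any neighbourhood of the bad set, $\eps$-regularity together with Shi's derivative estimates provide uniform $C^\infty$ bounds on $\Rm_{g_i}$, and Cheeger--Gromov compactness yields pointed smooth subconvergence to a smooth shrinker $(M_\infty\setminus S_\infty, g_\infty, f_\infty)$. In the special case $n=4$, one must additionally show that \eqref{eq.energybounds} holds automatically: this follows from combining Chern--Gauss--Bonnet with weighted integral identities on shrinkers and the entropy bound to control $\int\abs{\Rm_{g_i}}^2$ on large regions up to a topological term.

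The main obstacle is upgrading the smooth limit across each $x_\infty\in S_\infty$ to a smooth orbifold metric. I would perform a blow-up analysis: rescale $g_i$ around approximants of $x_\infty$ by a factor $\lambda_i\to 0$ detecting the curvature concentration, so that the rescaled equation $\Ric_{\wt g_i}+\nabla^2_{\wt g_i}\wt f_i = \tfrac{\lambda_i^2}{2}\wt g_i$ degenerates in the limit to the Ricci-flat equation, and the rescaled sequence subconverges to a complete Ricci-flat manifold with finite $L^{n/2}$ curvature energy, Euclidean volume growth, and at least one end. Classical removable singularities and ALE theory (Bando--Kasue--Nakajima, Tian, Anderson, Uhlenbeck) then identify this bubble as a Ricci-flat ALE orbifold asymptotic to $\R^n/\Gamma$ for some finite $\Gamma\subset O(n)$, and noncollapsing plus volume comparison force $g_\infty$ to extend as a smooth orbifold metric on a chart $\R^n/\Gamma$ around $x_\infty$. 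The shrinker equation, valid smoothly on the regular part, then extends to the orbifold charts by elliptic regularity, completing the proof.
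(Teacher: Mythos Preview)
Your outline is correct and matches the strategy the paper summarises in Section~\ref{sec_blowup} (note that Theorem~\ref{hm_compactness_thm} is quoted from \cite{HM11_comp1,HM15_comp2} rather than proved in this paper): quarter-distance-squared growth of $f$, Euclidean volume upper bounds, noncollapsing from the entropy lower bound, Gromov--Hausdorff precompactness, a local Sobolev inequality, the $\eps$-regularity lemma via Moser iteration (Lemma~\ref{shrinker_eps_reg}), finitely many bad balls from the energy bound, and smooth Cheeger--Gromov convergence away from them. The $n=4$ case via Chern--Gauss--Bonnet plus weighted shrinker identities is also exactly the approach of \cite{HM15_comp2}.

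The one place your sketch deviates is the final orbifold-extension step. You propose to blow up at $x_\infty$, obtain a Ricci-flat ALE bubble, read off $\Gamma$ from its asymptotic cone, and then conclude that $g_\infty$ extends as an orbifold metric modelled on $\R^n/\Gamma$. The route in \cite{HM11_comp1} (following Anderson, Bando--Kasue--Nakajima, Tian) is more direct and does not pass through the bubble: one shows $g_\infty$ has bounded curvature on punctured neighbourhoods of $x_\infty$, identifies the tangent cone of $g_\infty$ at $x_\infty$ as a flat cone $\R^n/\Gamma$, lifts to the local $\Gamma$-cover, and applies Uhlenbeck-type removable singularities and elliptic regularity to extend $(g_\infty,f_\infty)$ smoothly over the origin. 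Your blow-up limit carries information about the curvature that concentrated and escaped, not directly about the local structure of $g_\infty$; the group at infinity of the ALE bubble need not a priori agree with the tangent-cone group of $g_\infty$ without an intervening neck analysis (which is precisely what Theorem~\ref{blow-up.thm} and Theorem~\ref{shrinker_neck_thm} of the present paper supply, going beyond Theorem~\ref{hm_compactness_thm}). So the sentence ``noncollapsing plus volume comparison force $g_\infty$ to extend as a smooth orbifold metric'' is where the real work hides, and it should be argued intrinsically for $g_\infty$ rather than via the bubble.
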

In particular, this means that a subsequence converges in the pointed Gromov--Hausdorff sense and in the smooth Cheeger--Gromov sense away from the isolated point singularities, see Section \ref{sec_blowup} for precise definitions of the different notions of convergence as well as for the definition of an orbifold Ricci shrinker. We denote the set of isolated singularities by $\sQ$.

This compactness result generalised earlier shrinker compactness theorems for \emph{compact} shrinkers by Cao-Sesum \cite{CS07_compactnesss}, Weber \cite{We11_compactness}, and Zhang \cite{Z06_compactness} that furthermore rely on additional conditions such as pointwise curvature bounds or positivity assumptions for the curvature. This type of orbifold compactness theorem goes back to the fundamental work on sequences of Einstein manifolds by Anderson, Bando--Kasue--Nakajima, and Tian \cite{An89_einstein, BKN89_alecoords,Na88_bubbling, Ti90_calabi}. See also Uhlenbeck \cite{U82_remove} and Cheeger--Naber \cite{CN15_regularity}.

Our aim is to further extend Theorem \ref{hm_compactness_thm} by investigating precisely what happens at the points where orbifold singularities form. In addition to the work cited above, our main results are in particular inspired by bubbling theorems for Einstein manifolds by Anderson--Cheeger \cite{AC91_diffeofin} and Bando \cite{Ba90_bubbling,Ba90_correction}. Our first main result is the following.
\begin{theorem}[Bubble Tree Convergence]\label{shrinker_bubbling}
Let $n\geq 4$, $\left(M_i, g_i, f_i, p_i\right)$ be a sequence of $n$-dimensional oriented gradient Ricci shrinkers as in Theorem \ref{hm_compactness_thm}, and $\sQ$ be the set of orbifold points of the limiting orbifold Ricci shrinker $\left(M_\infty,g_\infty,f_\infty\right)$. Then, given $q\in \sQ$, there exist point-scale sequences $\{(q^k_i, r^k_i)\}_{k=1}^{N_q}$ where $M_i \ni q^k_i \to q$, $r^k_i\to 0$, and ALE bubbles $\{(V^k,h^k,q_\infty^k)\}_{k=1}^{N_q}$ (see Definition \ref{ale}), such that, up to passing to a subsequence, the following is true.
\begin{enumerate}
\item\label{point1} For all $k\neq \ell$, we have 
\begin{equation*}
\frac{r^k_i}{r^\ell_i} + \frac{r^\ell_i}{r^k_i} + \frac{d_{g_i}(q^k_i, q^\ell_i)}{r^k_i + r^\ell_i} \to \infty
\end{equation*}
as $i\to\infty$.
\item\label{point2} For every fixed $1\leq k\leq N_q$, the pointed rescaled manifolds $(M_i, (r^k_i)^{-2}g_i,q^k_i)$ converge in the pointed orbifold Cheeger-Gromov sense to $(V^k,h^k,q_\infty^k)$ as $i\to\infty$.
\item\label{point3} Given any other sequences $M_i \ni q_i \to q$ and $\varrho_i \to 0$ such that 
\begin{equation*}
\min_{k=1,\dots N_q} \Big(\frac{\varrho_i}{r^k_i} + \frac{r^k_i}{\varrho_i} + \frac{d_{g_i}(q_i,q^k_i)}{\varrho_i + r^k_i}\Big)\to \infty
\end{equation*}
then the pointed rescaled manifolds $(M_i, (\varrho_i)^{-2}g_i,q_i)$ converge to a flat limit. 
\item\label{point4} The number of ALE bubbles forming is locally finite, in particular for every $r\geq 2$ there exists $N=N(\underline{\mu}, E(2r))$ such that $\sum_{q \in \sQ_r} N_q \leq N$, where $\sQ_r := \sQ \cap B_{g_\infty}(p_\infty,r)$. 
\item\label{point5} Finally, the following energy identity holds:
\begin{equation*}
\lim_{i \to \infty} \int_{B_{g_i}(p_i,r)} \abs{\Rm_{g_i}}^{n/2}_{g_i} dV_{g_i} = \int_{B_{g_\infty}(p_\infty,r)} \abs{\Rm_{g_\infty}}^{n/2}_{g_\infty} dV_{g_\infty} + \sum_{q \in \sQ_r} \sum_{k = 1}^{N_q} \int_{V^k}\abs{\Rm_{h^k}}^{n/2}_{h^k} dV_{h^k},
\end{equation*}
whenever $r \geq 2$ is such that $\sQ \cap \partial B_{g_\infty}(p_\infty,r) = \emptyset$.
\end{enumerate}
\end{theorem}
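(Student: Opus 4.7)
The plan is to carry out a bubble-tree construction in the spirit of Anderson--Cheeger \cite{AC91_diffeofin} and Bando \cite{Ba90_bubbling,Ba90_correction}, adapted to Ricci shrinkers. Fix $q\in\sQ$. Since convergence to the orbifold limit fails to be smooth exactly at the points of $\sQ$, there must exist sequences $M_i\ni\tilde q_i\to q$ along which $|\Rm_{g_i}|_{g_i}(\tilde q_i)\to\infty$. A standard point-picking lemma then produces a first point-scale sequence $(q^1_i,r^1_i)$ with $r^1_i\to 0$, $(r^1_i)^2|\Rm_{g_i}|_{g_i}(q^1_i)$ bounded from below, and $(r^1_i)^2|\Rm_{g_i}|_{g_i}$ uniformly bounded from above on rescaled balls about $q^1_i$ of radius tending to infinity. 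Setting $\tilde g_i:=(r^1_i)^{-2}g_i$, the shrinker equation rescales to $\Ric_{\tilde g_i}+\Hess_{\tilde g_i}f_i=\tfrac{(r^1_i)^2}{2}\tilde g_i$, while $|\nabla_{\tilde g_i}f_i|^2_{\tilde g_i}=(r^1_i)^2|\nabla_{g_i}f_i|^2_{g_i}\to 0$ on bounded sets; thus the rescaled sequence still satisfies the hypotheses of Theorem~\ref{hm_compactness_thm} and, by a subsequence, converges in the pointed orbifold Cheeger--Gromov sense to a non-flat, complete, Ricci-flat orbifold $(V^1,h^1,q^1_\infty)$. Combining the at-most Euclidean volume growth inherited from the shrinker bound with finiteness of $\int_{V^1}|\Rm_{h^1}|^{n/2}dV_{h^1}$ then forces $V^1$ to be ALE in the sense of Definition~\ref{ale}.

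I would then iterate. After extracting the first bubble, I restart the point-picking procedure on the residual curvature concentration, which by construction occurs at scales and locations essentially disjoint from $(q^1_i,r^1_i)$ in the sense of~\ref{point1}; this yields $(q^2_i,r^2_i)$, and so on. The dichotomy built into point-picking --- either select a scale comparable to some already-chosen $r^k_i$ but at a point at distance $\gg r^k_i$ from $q^k_i$, or select a genuinely smaller scale --- gives~\ref{point1} directly, and it simultaneously delivers~\ref{point3}: any other sequence $(q_i,\varrho_i)$ satisfying the separation condition must produce a flat blow-up, for otherwise it would have been selected as a bubble centre by maximality of the procedure, contradicting the construction.

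Local finiteness~\ref{point4} follows from an $\eps$-regularity / energy-gap lemma: every complete, non-flat, Ricci-flat ALE orbifold satisfies $\int_V|\Rm_h|^{n/2}dV_h\geq\eps_0(n)>0$. Combined with the uniform local bound $E(2r)$ and the mutual disjointness of the bubble regions guaranteed by~\ref{point1}, this caps $\sum_{q\in\sQ_r}N_q$ in terms of $\underline{\mu}$ and $E(2r)$. The main obstacle is~\ref{point5}, which is equivalent to the statement that no $L^{n/2}$ curvature is lost in the \emph{neck regions} between successive scales. I expect to establish this by a quantitative neck analysis: on annular regions in which all deeper bubble centres have been excised, the rescaled curvature is uniformly bounded, and a harmonic-radius estimate combined with a decay / Moser iteration for $|\Rm|$ on long bounded-curvature annuli of (almost) Ricci-flat geometry should yield that the $L^{n/2}$ energy on such annuli tends to zero as the conformal width of the annulus grows. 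The potential $f_i$ enters only through lower-order terms at the small scales of interest, so the scheme from the Einstein / Ricci-flat setting essentially carries over; implementing it carefully in the weighted geometry and uniformly across nested bubbles is where the bulk of the technical work lies. Once no-neck-energy is in place, decomposing $B_{g_i}(p_i,r)$ into the orbifold part, the bubble regions, and the necks, and using $\sQ\cap\partial B_{g_\infty}(p_\infty,r)=\emptyset$ to preclude energy escaping across the outer boundary, immediately yields the energy identity.
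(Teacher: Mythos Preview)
Your outline is broadly aligned with the paper's strategy---extract bubbles from the smallest scale outward and decompose the energy into body, bubble, and neck contributions---but it underestimates the main analytic obstacle in~\ref{point5}. The statement that ``a decay / Moser iteration for $|\Rm|$ on long bounded-curvature annuli \ldots\ should yield that the $L^{n/2}$ energy on such annuli tends to zero'' hides a genuine failure: applying the standard Moser-type annulus estimate (Proposition~\ref{grs_bando_lemma2}) to the inequality $\Lap_f|\Rm|\geq -C|\Rm|^2$ requires an exponent $\alpha>1$, with the constant blowing up as $\alpha\searrow 1$. To land on $u^{\alpha\gamma}=|\Rm|^{n/2}$ with $\gamma=\tfrac{n}{n-2}$ one needs $\alpha=\tfrac{n-2}{2}$, which equals $1$ exactly when $n=4$. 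The paper's fix is an improved Kato inequality for shrinkers, $(1+\delta_K)\,|\nabla|\Rm||^2\leq|\nabla\Rm|^2$ (Lemma~\ref{Br00_kato_grs}), proved via Branson's framework using $d\Rm=0$ and the shrinker identity $d^\ast_f\Rm=0$. This yields $\Lap_f|\Rm|^{1-\delta_K}\geq -C_K|\Rm|^{2-\delta_K}$ and hence allows $\alpha=\tfrac{n-2}{2(1-\delta_K)}>1$, which is what makes Theorem~\ref{no_neck_energy} work. Your sketch does not identify this obstruction or its resolution, and without it the no-neck-energy step does not close in dimension four.

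Two smaller points. First, your curvature-maximum point-picking detects the first (leaf) bubble cleanly, but for subsequent bubbles the residual concentration may sit at a \emph{larger} scale where the first bubble has already collapsed to an orbifold point; a $\max|\Rm|$ selection will keep re-detecting the deepest bubble. The paper instead selects by \emph{energy}: after excising $B_{g_i}(q^1_i,K^1r^1_i)$, one takes the smallest $r$ for which some $r$-ball carries energy $\geq\bar\eps/2$ outside the excised set, which is robust to orbifold points in intermediate bubbles and directly gives~\ref{point1}. Second, the rescaled sequences $(M_i,(r^k_i)^{-2}g_i)$ are not shrinkers, so you cannot literally invoke Theorem~\ref{hm_compactness_thm}; one needs a blow-up version (Theorem~\ref{blow-up.thm}) verifying that $\eps$-regularity, non-collapsing, and the ALE conclusion via \cite{BKN89_alecoords} survive the rescaling---and for the latter it is the \emph{lower} volume bound, not the upper one, that is required.
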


Bubble tree constructions as in Theorem \ref{shrinker_bubbling} are an important tool in the study of geometric PDEs and have been employed in a variety of situations. In addition to the work on Einstein manifolds cited above, we would like to mention the classical works of Sacks--Uhlenbeck \cite{SU81_bubbling} for harmonic maps, as well as the articles by Brezis--Coron \cite{BC85_bubbling} and Struwe \cite{St84_bubbling} for certain elliptic systems, all of which have inspired us. 

From a technical point of view, our proof of Theorem \ref{shrinker_bubbling} differs from the ones by Anderson--Cheeger \cite{AC91_diffeofin} and Bando \cite{Ba90_bubbling,Ba90_correction} in that we start the process from the \emph{deepest} bubble (or \emph{leaf bubble} which corresponds to the \emph{smallest} scale) and then work our way outwards, while their argument goes the other direction. This follows the first author's bubbling analysis for minimal surfaces obtained jointly with Sharp in \cite{BS18_minsurf}, as well as the beautiful work of Chang--Qing--Yang in \cite{CQY07_bubbling}.

Furthermore, working with non-compact manifolds, Theorem \ref{shrinker_bubbling} only states that the number of orbifold points in $\sQ_r = \sQ \cap B_{g_\infty}(p_\infty, r)$ is bounded by $N=N(\underline{\mu}, E(2r))$. This means that if one wishes to apply the result for large $r$, or even the entire shrinker, the number of orbifold points can become arbitrarily large and as a consequence, the bubble tree construction in Section \ref{sec_bubbling} may \emph{not} terminate.

A final important difference to the work of Anderson--Cheeger is Point \ref{point5} of Theorem \ref{shrinker_bubbling}, which is not present in \cite{AC91_diffeofin}. While proving an energy \emph{inequality} is relatively easy, proving the claimed energy \emph{identity} requires a more delicate argument to show that no energy is lost in the intermediate regions between the different bubble scales. A significant part of the present paper is therefore focusing on these so-called \emph{neck regions}, see in particular Sections \ref{sec_neck} and \ref{sec_Kato}. Once the energy identity is proved, it can easily be translated into an identity for the Euler characteristic: 

\begin{corollary}\label{cor.eulerchar}
Under the assumptions of Theorem \ref{shrinker_bubbling} and using the same notation, we have the identity
\begin{equation}\label{euler_identity}
\lim_{i \to \infty} \chi(B_{g_i}(p_i,r)) = \chi(B_{g_\infty}(p_\infty,r)\setminus \mathcal{Q}_r) + \sum_{q \in \sQ_r} \sum_{k = 1}^{N_q} \chi(V^k \setminus \mathcal{Q}^k)
\end{equation}
where $\mathcal{Q}^k$ is the (possibly empty) set of orbifold points of the ALE bubble $(V^k,h^k)$.
\end{corollary}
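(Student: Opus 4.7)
The proof proceeds by a topological decomposition of $B_{g_i}(p_i,r)$ mirroring the bubble-tree structure of Theorem \ref{shrinker_bubbling}, combined with additivity of the Euler characteristic under gluings along spherical space-form boundaries. The guiding principle is that the diffeomorphism type of $B_{g_i}(p_i,r)$ stabilises for large $i$ (the local diffeomorphism finiteness alluded to in the paper's title), so that $\chi(B_{g_i}(p_i,r))$ becomes eventually constant and can be read off from the gluing combinatorics.

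To set this up, I would fix small radii $\sigma_q > 0$ for each $q \in \sQ_r$ with $B_{g_\infty}(q, 2\sigma_q) \cap \sQ_r = \{q\}$, together with large truncation radii $R^k$ for each bubble $V^k$, so that for $i$ sufficiently large the ball $B_{g_i}(p_i, r)$ decomposes into three kinds of pieces glued along spherical boundaries. First, an outer region $\mathcal{B}_i$ which, by smooth Cheeger--Gromov convergence on $M_\infty \setminus \sQ$, is diffeomorphic to $B_{g_\infty}(p_\infty, r) \setminus \bigcup_q B_{g_\infty}(q, \sigma_q)$; its Euler characteristic converges to $\chi(B_{g_\infty}(p_\infty, r) \setminus \sQ_r)$ as $\sigma_q \to 0$, since each deleted ball retracts to $q$. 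Second, for each $q \in \sQ_r$ and each $k = 1, \dots, N_q$, a bubble region $\mathcal{V}^k_{q,i}$ which by Point \ref{point2} is diffeomorphic to a compact truncation of $V^k \setminus \sQ^k$, with Euler characteristic tending to $\chi(V^k \setminus \sQ^k)$ as $R^k \to \infty$. Third, intermediate neck regions $\mathcal{N}^j_{q,i}$ between successive bubble scales which, by Point \ref{point3} and the quantitative neck analysis of Sections \ref{sec_neck}--\ref{sec_Kato}, are diffeomorphic to cylinders $(S^{n-1}/\Gamma) \times [0,1]$ over spherical space forms. The scale separation of Point \ref{point1} guarantees that all of these pieces are mutually disjoint.

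I would then apply inclusion--exclusion, observing that each gluing of two pieces $A$ and $B$ along a common boundary sphere via a cylindrical neck contributes
\[
\chi(A \cup B) = \chi(A) + \chi(B) - \chi(S^{n-1}/\Gamma).
\]
Iterating this identity through the bubble tree for every $q \in \sQ_r$, the $\chi(S^{n-1}/\Gamma)$ terms cancel exactly against the Euler characteristic contributions of the removed balls around orbifold points and ALE ends, which after careful bookkeeping yields \eqref{euler_identity}.

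The main obstacle is to verify the asserted diffeomorphism type of the neck regions uniformly in $i$. This is where the analysis underpinning the energy identity of Point \ref{point5} does the heavy lifting: the necks carry metrics close to a flat cone metric on an annulus in $\R^n/\Gamma$, from which the cylindrical diffeomorphism type can be extracted. An alternative and arguably cleaner route in even dimensions $n = 2m$ is to apply the Chern--Gauss--Bonnet theorem with boundary to $B_{g_i}(p_i,r)$ directly: scale invariance of the Pfaffian top form, the estimate $|\mathrm{Pf}(\Rm)| \lesssim |\Rm|^{n/2}$, and the energy identity of Point \ref{point5} together distribute the bulk integral among the orbifold limit and the bubbles with vanishing neck contribution, while smooth convergence near $\partial B_{g_\infty}(p_\infty,r)$ handles the boundary transgression term.
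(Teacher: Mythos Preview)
Your primary route—decompose $B_{g_i}(p_i,r)$ into body, bubble and neck pieces and use additivity of $\chi$—is correct and differs from the paper's proof, which is precisely your ``alternative route'' via Chern--Gauss--Bonnet with boundary. There, for a Ricci-flat ALE orbifold $(V^k,h^k)$ with end group $\Theta_k$ and orbifold groups $\{\Gamma_{k,j}\}$ one has (in $n=4$) $\tfrac{1}{32\pi^2}\int_{V^k}\abs{\Rm}^2 = \chi(V^k\setminus\sQ^k)-\abs{\Theta_k}^{-1}+\sum_j\abs{\Gamma_{k,j}}^{-1}$; the neck theorem matches each child's end group with its parent's orbifold group, so the $\abs{\Gamma}^{-1}$ terms telescope through the tree, and the energy identity (Point~\ref{point5}, adapted to the Chern--Gauss--Bonnet integrand) assembles the pieces. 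Your topological argument is more elementary—it bypasses the Pfaffian and is essentially the same decomposition that drives Corollary~\ref{shrinker_diffeo_fin}—whereas the paper's route exhibits the Euler identity as a direct consequence of the energy identity. One simplification to your bookkeeping: in the nontrivial even-dimensional case the gluing hypersurfaces $S^{n-1}/\Gamma$ are odd-dimensional with $\chi=0$, so the cylindrical necks contribute nothing and the Euler characteristics simply add; no cancellation against ``removed balls'' is needed. Likewise, for small $\sigma_q$ and large $R^k$ the truncated pieces are already homotopy equivalent to the punctured ones, so equality (not mere convergence) of Euler characteristics holds at each finite stage.
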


The concept of an ALE bubble is defined as follows.
\begin{definition}[ALE Bubble]\label{ale}
A manifold (or an orbifold with finitely many singularities) $(M^n, g)$ with one end is \emph{asymptotically locally Euclidean (ALE) of order $\tau > 0$} if there is a compact set $K \subset M^n$, a constant $R > 0$, a finite group $\Gamma \subset O\left(n\right)$ acting freely on $\mathbb{R}^n \setminus B(0,R)$, as well as a $C^\infty$ diffeomorphism $\psi: M^n \setminus K \to \left(\mathbb{R}^n \setminus B(0,R)\right)/\Gamma$ such that the following estimates hold:
\begin{align*}
(\varphi^\ast g)_{ij}(x) &= \delta_{ij} + O (\abs{x}^{-\tau})\\
\partial^k (\varphi^\ast g)_{ij}(x) &= O (\abs{x}^{-\tau-k}), \quad \forall k\geq 1
\end{align*}
for all $x,y \in \mathbb{R}^n \setminus B(0,R)$. Here $\varphi := \psi^{-1} \circ \pi$ where $\pi: \mathbb{R}^n \to \mathbb{R}^n/\Gamma$ is the natural projection. We say that an $n$-dimensional manifold (or orbifold with finitely many singularities) is an \emph{ALE bubble}, if it is complete and non-compact with one end, Ricci-flat, non-flat with bounded $L^{n/2}$ Riemannian curvature, and ALE of order $n-1$ in general. If $n = 4$ or the manifold/orbifold is K\"ahler then we require it to be ALE of order $n$. (In Definition \ref{bubble_types}, we will further distinguish between leaf and intermediate bubbles.)
\end{definition}

A further consequence of Theorem \ref{shrinker_bubbling} is a (local) diffeomorphism finiteness result for Ricci shrinkers.
\begin{corollary}[Local Diffeomorphism Finiteness]\label{shrinker_diffeo_fin}
Let $\mathcal{M}$ denote the collection of $n$-dimensional gradient Ricci shrinkers $(M,g,f)$ with entropy uniformly bounded below $\mu(g) \geq \underline{\mu} > -\infty$ and uniform local energy bounds as in \eqref{eq.energybounds} whenever $n>4$. Moreover, for any $r>0$, set $\mathcal{M}^r$ to be the collection of $M^r:=M \cap B_{g}(p,r)$, where $M\in\mathcal{M}$ and $p:= \argmin_{M} f$. Then $\mathcal{M}^r$ contains only a finite number of diffeomorphism types.
\end{corollary}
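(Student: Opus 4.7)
The plan is to argue by contradiction, using Theorem \ref{shrinker_bubbling} to extract from any supposedly infinite collection of distinct diffeomorphism types a subsequence whose bubble-tree decomposition is fixed, and then to show that this decomposition determines the diffeomorphism type of $M^r$ unambiguously. So suppose, for contradiction, there exists a sequence $(M_i, g_i, f_i) \in \mathcal{M}$ with $M_i^r = M_i \cap B_{g_i}(p_i,r)$ pairwise non-diffeomorphic. By Theorem \ref{hm_compactness_thm} and Theorem \ref{shrinker_bubbling}, after passing to a subsequence, $(M_i,g_i,f_i,p_i)$ converges in the pointed orbifold Cheeger--Gromov sense to a limit $(M_\infty,g_\infty,f_\infty,p_\infty)$ with orbifold singular set $\sQ$; moreover, $\sQ_{r+1}$ is finite (Point \ref{point4}), so after a small perturbation of $r$ we may assume $\sQ \cap \partial B_{g_\infty}(p_\infty,r) = \emptyset$, and for each $q \in \sQ_r$ we have finitely many point-scale sequences $(q_i^k, r_i^k)$ producing ALE bubbles $(V^k,h^k,q_\infty^k)$ as in the theorem.

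Next, I would build explicit diffeomorphisms $\Phi_{ij}: M_i^r \to M_j^r$ for all sufficiently large $i,j$ by stitching together three kinds of pieces. First, the orbifold Cheeger--Gromov convergence on any fixed compact subset $K \subset B_{g_\infty}(p_\infty,r) \setminus \bigcup_{q \in \sQ_r} B_{g_\infty}(q,\rho)$ gives a smooth diffeomorphism from $K$ onto its image in $M_i$ for $i$ large, and composing these yields a diffeomorphism between the corresponding "thick" regions in $M_i^r$ and $M_j^r$. Second, the smooth orbifold convergence in Point \ref{point2} produces analogous diffeomorphisms between corresponding compact subsets of the rescaled bubble regions, away from the singular sets $\mathcal{Q}^k$. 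The nontrivial piece is the collection of annular neck regions: those sitting between the scale $\rho$ (where the thick part ends) and the outermost bubble scale at each $q \in \sQ_r$, and those interpolating between consecutive bubble scales along each branch of the tree.

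The diffeomorphism rigidity of the necks is the main obstacle, and it is here that the no-neck-energy conclusion, i.e. Point \ref{point5} of Theorem \ref{shrinker_bubbling}, is essential. On any neck region, the $L^{n/2}$ curvature energy tends to zero, and Point \ref{point3} identifies the pointed rescaled limit on such an annulus as flat; combined with the standard $\eps$-regularity and harmonic-radius arguments (in the spirit of Anderson--Cheeger \cite{AC91_diffeofin}), this forces each neck annulus to be diffeomorphic, for large $i$, to a truncated cone $(B_{R_2} \setminus \bar B_{R_1})/\Gamma \subset \mathbb{R}^n/\Gamma$ with the group $\Gamma$ dictated by the orbifold structure at $q$ (respectively by the ALE group at infinity of the adjacent bubble). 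In particular the diffeomorphism type of each neck annulus is fixed along the subsequence, and the transition diffeomorphisms produced by orbifold convergence on each side can be adjusted (using the bi-Lipschitz control on nested shells at the neck endpoints) to glue smoothly with the standard annular model.

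Once this gluing is verified, the three families of diffeomorphisms assemble into global diffeomorphisms $\Phi_{ij}: M_i^r \to M_j^r$ for all sufficiently large $i,j$, contradicting the assumption that the $M_i^r$ represent infinitely many diffeomorphism types. The hardest technical step is, as indicated, the control of the neck regions and the compatibility of the neck diffeomorphisms with those coming from orbifold Cheeger--Gromov convergence on the bubbles and on the limiting orbifold; this compatibility is exactly what the Kato-type estimates and neck analysis of Sections \ref{sec_neck} and \ref{sec_Kato} (used to prove Point \ref{point5}) provide, so the corollary should follow essentially by reorganising that analysis.
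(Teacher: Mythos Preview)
Your approach is essentially the paper's: argue by contradiction, decompose $B_{g_i}(p_i,r)$ into body, bubble, and neck regions, show each piece has eventually constant diffeomorphism type, and glue on overlapping dyadic annuli.

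One small correction of emphasis: you call Point~\ref{point5} (the energy identity) ``essential'' for the neck step and plan to reach the cone model via Point~\ref{point3} plus $\eps$-regularity. The paper does not use Point~\ref{point5} here at all. It invokes the Neck Theorem (Theorem~\ref{shrinker_neck_thm}) directly: once $R>\eps_{\mathrm{neck}}^{-1/2}$ and the neck annulus carries energy below $\eps_{\mathrm{neck}}$ (which already follows from the choice of the $K^k$ in the bubble-tree construction, not from the full no-neck-energy statement), Theorem~\ref{shrinker_neck_thm} gives an $\eps$-quasi-isometry of the entire annulus onto $\mathcal{C}_{s_1,s_2}(S^{n-1}/\Gamma)$, hence a fixed diffeomorphism type. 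Your route through Points~\ref{point3} and~\ref{point5} would also work, but it is more than is needed and somewhat circular, since the Anderson--Cheeger style argument you allude to \emph{is} the Neck Theorem. In short: cite Theorem~\ref{shrinker_neck_thm} for the necks rather than the energy identity, and your proof matches the paper's.
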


The corollary shows in particular that the collection of \emph{closed} Ricci shrinkers with a \emph{uniform upper bound on the diameter} as well as uniform lower entropy bounds and uniform energy bounds contains only a finite number of diffeomorphism types. Recently, Munteanu--Wang \cite{MW15_cone} have shown such diameter bounds follows from the other assumptions.  Proving a \emph{global} diffeomorphism finiteness result is a more delicate issue; even in the case where one knows that all orbifold singularities form in a compact region (which is for example the case under a scalar curvature bound), one still needs to control the number of ends of the shrinkers, a problem which we will study elsewhere.

The paper is organised as follows: In Section \ref{sec_blowup}, we recall some basic concepts and collect some facts about gradient Ricci shrinkers before proving a blow-up version of Theorem \ref{hm_compactness_thm} (see Theorem \ref{blow-up.thm}). In Section \ref{sec_neck}, we first show a connectedness result for small annuli in Ricci shrinkers (Lemma \ref{small_annuli}) which implies that bubbles have precisely one end (Corollary \ref{cor.oneend}), and then we proceed to prove a neck theorem (Theorem \ref{shrinker_neck_thm}) controlling the geometry of intermediate regions in our bubbling result. Section \ref{sec_Kato} is dedicated to proving an energy estimate in these neck regions (Theorem \ref{no_neck_energy}) via an improved Kato inequality for Ricci shrinkers. Theorem \ref{shrinker_bubbling} is then proved in Section \ref{sec_bubbling}. In Section \ref{sec_further}, we prove the two corollaries from above.

\paragraph{Acknowledgements.} RB has been partially supported by the EPSRC grant EP/S012907/1 and LY has been supported by a studentship from the QMUL Faculty of Science and Engineering Research Support Fund. We also thank an anonymous referee for some valuable comments.

\section{A Blow-up Version of the Compactness Theorem}\label{sec_blowup}
Let us start this section with the precise notions of pointed Gromov--Hausdorff convergence and pointed orbifold Cheeger--Gromov convergence and a quick overview of the main results from \cite{HM11_comp1,HM15_comp2}.
\begin{definition}[Pointed Gromov--Hausdorff Convergence]\label{GHA}
A pointed map $f: (X,p) \to (Y,q)$ between two metric spaces $\left(X, d_X, p\right)$, $\left(Y,d_Y,q\right)$ is an \emph{$\eps$-pointed Gromov--Hausdorff approximation} ($\eps$-PGHA) if it is almost an isometry and almost onto in the following sense
\begin{enumerate}[i)]
\item $\abs{d_X(x_1, x_2) - d_Y(f(x_1), f(x_2))} \leq \eps$, for all $x_1,x_2 \in B_{d_X}(p,1/\eps)$,
\item for all $y \in B_{d_Y}(q,1/\eps)$ there exists $x\in B_{d_X}(p,1/\eps)$ with $d_Y(y,f(x))\leq \eps$.
\end{enumerate}
We say $\left(X_i,p_i\right) \to \left(Y,q\right)$ as $i\to\infty$ in the \emph{pointed Gromov--Hausdorff} sense if 
\begin{align*}
d_{\mathrm{pGH}}((X_i,p_i), (Y,q)) &:= \inf \{\eps > 0 : \exists \text{ $\eps$-pGHA } f_1:(X_i,p_i) \to (Y,q) \text{ and } f_2:(Y,q) \to (X_i,p_i)\}\\ 
&\to 0 \qquad (i\to\infty).
\end{align*}
\end{definition}
As explained in \cite{HM11_comp1}, Lemma 2.1 and Lemma 2.2, under the normalisation \eqref{eq.normalisation}, one obtains the following growth condition for the potential $f$ from the basepoint $p$,
\begin{equation*}
\frac{1}{4}(d(x,p)-5n)_{+}^2 \leq f(x) - \mu(g) \leq \frac{1}{4}(d(x,p)+\sqrt{2n})^2.
\end{equation*}
This in turn implies the volume growth estimate
\begin{equation}\label{eq.volumegrowth}
\Vol_g(B_g(p,r)) \leq V_0 r^n, \quad \forall r>0
\end{equation}
with $V_0$ being a constant depending only on the dimension of the shrinker. Finally, similar to Perelman's non-collapsing result, under the entropy bound $\mu(g)\geq \underline{\mu}>-\infty$ one obtains for every $r$ the existence of $v_0 =v_0(r,n,\underline{\mu})$ such that
\begin{equation}\label{eq.noncollapsing}
\Vol_g(B_g(q,\delta)) \geq v_0 \delta^n,
\end{equation}
for every ball $B_g(q,\delta)\subset B_g(p,r)$, $0<\delta\leq 1$, see Lemma 2.3 in \cite{HM11_comp1}. Pointed Gromov--Hausdorff convergence of a sequence of Ricci shrinkers $(M_i,g_i,f_i,p_i)$ with entropy $\mu(g_i)\geq \underline{\mu}>-\infty$ to a complete metric space $(M_\infty,d_\infty,p_\infty)$ then follows directly from \eqref{eq.volumegrowth}--\eqref{eq.noncollapsing} and Gromov's compactness theorem, see Theorem 2.4 in \cite{HM11_comp1} for details.

The main work of \cite{HM11_comp1,HM15_comp2} then goes into improving the regularity of the convergence and of the limit metric space $M_\infty$. 
\begin{definition}[Orbifold Ricci Shrinker]
A complete metric space $M_\infty$ is called an \emph{orbifold Ricci shrinker} if it is a smooth Ricci shrinker away from a locally finite set $\sQ$ of singular points and at every $q\in\sQ$, $M_\infty$ is modelled on $\mathbb{R}^n/\Gamma$ for a finite group $\Gamma \subset O(n)$. Moreover, there exists an associated covering $\mathbb{R}^n \supset B(0,\varrho)\setminus \{0\} \stackrel{\pi}{\to} U \setminus \{q\}$ of some neighbourhood $U\subset M_\infty$ of $q$ such that $(\pi^*g_\infty,\pi^*f_\infty)$ can be extended smoothly to a gradient shrinker over the origin.
\end{definition}
\begin{definition}[Pointed Orbifold Cheeger--Gromov Convergence]
A sequence of gradient shrinkers $\left(M^n_i, g_i, f_i, p_i\right)$ converges to an orbifold gradient shrinker $\left(M^n_\infty, g_\infty, f_\infty, p_\infty\right)$ in the \emph{pointed orbifold Cheeger--Gromov sense} if the following properties hold:
\begin{enumerate}
\item There exist a locally finite set $\mathcal{Q} \subset M_\infty$, an exhaustion of $M_\infty \setminus \mathcal{Q}$ by open sets $U_i$, and smooth embeddings $\varphi_i: U_i \to M_i$ such that $\left(\varphi^\ast_i g_i, \varphi^\ast_i f_i\right)$ converges to $\left(g_\infty, f_\infty\right)$ in the $C^\infty_{\mathrm{loc}}$-sense on $M_\infty \setminus \mathcal{Q}$.
\item Each of the above maps $\varphi_i$ can be extended to an $\eps$-pGHA which yield a convergent sequence $\left(M_i, d_i, p_i\right) \to \left(M_\infty, d_\infty, p_\infty\right)$ in the pointed Gromov--Hausdorff sense.
\end{enumerate}
Pointed orbifold Cheeger--Gromov convergence to a Ricci-flat orbifold is defined analogously.
\end{definition}
The main result of \cite{HM11_comp1} improves the Gromov--Hausdorff convergence to orbifold Cheeger--Gromov convergence under the energy bound \eqref{eq.energybounds} and the main result of \cite{HM15_comp2} shows that the energy bound assumption is in fact always satisfied in dimension $n=4$, see Theorem \ref{hm_compactness_thm} from the introduction. It turns out that the set $\sQ$ is the same in the two above definitions, meaning that the convergence is bad (or non-smooth) around a point $q\in M_\infty$ if and only if $M_\infty$ is non-smooth at $q$. We can therefore use the expressions that $q$ is a singular point or a point of bad convergence interchangeably. A key ingredient in the proof of this improved convergence result is an $\eps$-regularity theorem that follows from local Sobolev constant bounds via a Moser iteration argument. We recall these two results here as we will need them later.
\begin{lemma}[Local Sobolev constant bounds, Lemma 3.2 in \cite{HM11_comp1}]\label{Sobolev_bounds}
There exist $C_S(r)<\infty$ and $\delta_0(r)>0$ depending on $r$, $n$ and $\underline{\mu}$, such that for every gradient shrinker with normalised weighted volume and $\mu(g) \geq \underline{\mu} > -\infty$, and for every ball $B_g(x,\delta) \subset B_g(p,r)$ with $0 < \delta \leq \delta_0(r)$ and $p := \argmin_M f$, we have
\begin{equation*}
\norm{\varphi}_{L^{2^*}} < C_S(r)\norm{\nabla \varphi}_{L^2},
\end{equation*}
for all functions $\varphi\in C^1_c(B_g(x,\delta))$, where $2^*=\tfrac{2n}{n-2}$.
\end{lemma}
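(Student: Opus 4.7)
The plan is to derive a global Sobolev-type inequality on $(M,g)$ from the lower bound on Perelman's entropy, and then to localise it to balls of small radius by absorbing a lower-order term via a volume estimate.

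Since the normalised potential $f$ realises $\mu(g) = \sW(g,f,1)$, substituting $u^2 = (4\pi)^{-n/2} e^{-f}$ in Perelman's $\sW$-functional translates the bound $\mu(g)\geq \underline\mu$ into the logarithmic Sobolev inequality
\begin{equation*}
\int_M u^2 \log u^2 \, dV_g \leq \int_M \left(4\abs{\nabla u}^2 + \Sc_g\, u^2\right) dV_g + C(n, \underline{\mu}),
\end{equation*}
valid whenever $\int_M u^2 \, dV_g = 1$. For $u$ supported in $B_g(p,r)$, the scalar curvature is bounded on the support in terms of $r$, $n$ and $\underline{\mu}$ (via the shrinker identity $\Sc_g+\abs{\nabla f}^2 - f = \underline\mu$ and the quadratic growth of $f$ from $p$), so the $\Sc_g\, u^2$ term can be absorbed into a constant times $\norm{u}_{L^2}^2$. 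A classical consequence of such a log-Sobolev inequality---via Rothaus' argument, Davies' ultracontractivity, or Moser iteration applied to the heat semigroup---is then a Sobolev-type inequality
\begin{equation*}
\norm{u}_{L^{2^*}}^2 \leq A\norm{\nabla u}_{L^2}^2 + B\norm{u}_{L^2}^2
\end{equation*}
for all $u \in W^{1,2}(M)$ supported in $B_g(p,r)$, with $A$ and $B$ depending only on $n$, $r$ and $\underline\mu$.

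To localise, I would observe that for $B_g(x,\delta)\subset B_g(p,r)$, the Wei--Wylie weighted Bishop--Gromov comparison (applied with $\Ric_g+\Hess_g f = \tfrac12 g$) together with the bound on $\abs{\nabla f}$ on $B_g(p,2r)$ yields $\Vol_g(B_g(x,\delta)) \leq C'(n,r)\,\delta^n$ for small $\delta$. For $\varphi\in C^1_c(B_g(x,\delta))$, H\"older's inequality then gives
\begin{equation*}
\norm{\varphi}_{L^2}^2 \leq \Vol_g(B_g(x,\delta))^{2/n}\norm{\varphi}_{L^{2^*}}^2 \leq C'(n,r)^{2/n}\,\delta^2\,\norm{\varphi}_{L^{2^*}}^2.
\end{equation*}
Choosing $\delta_0(r)$ so small that $B\cdot C'(n,r)^{2/n}\,\delta_0^2 \leq \tfrac12$ allows one to absorb the lower-order term into the left-hand side, yielding $\norm{\varphi}_{L^{2^*}}^2 \leq 2A\norm{\nabla \varphi}_{L^2}^2$ and hence $C_S(r)=\sqrt{2A}$. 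The main technical obstacle is the second step: extracting an honest $L^{2^*}$-Sobolev inequality from the log-Sobolev while keeping explicit control on the dependence of the constants on $r$, $n$ and $\underline\mu$. Once this is in place, the volume comparison and absorption in the localisation step are routine.
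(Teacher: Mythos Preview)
The paper does not prove this lemma; it merely quotes it from Haslhofer--M\"uller \cite{HM11_comp1} (Lemma~3.2 there), so there is no in-paper proof to compare against. Your outline is essentially the argument given in \cite{HM11_comp1}: the entropy lower bound $\mu(g)\geq\underline{\mu}$ yields a uniform logarithmic Sobolev inequality (for arbitrary test functions, not just the shrinker potential), one passes from log-Sobolev to an $L^{2^*}$-Sobolev inequality with an additive $L^2$ term, and then absorbs the lower-order term on small balls using a volume upper bound.

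Two small points. First, the auxiliary identity is $\Sc_g + \abs{\nabla f}^2 - f = -\mu(g)$, not $=\underline{\mu}$; combined with $\Sc_g\geq 0$ and the growth estimate for $f$, this still gives the scalar curvature bound you need on $B_g(p,r)$. Second, the log-Sobolev inequality you want is the one coming from $\mu(g,1)\geq \underline{\mu}$ applied to an \emph{arbitrary} normalised $u$, not from the fact that the shrinker potential realises $\sW$; the statement ``the normalised potential $f$ realises $\mu(g)=\sW(g,f,1)$'' is true but not the mechanism here. With these corrections your sketch matches the approach in \cite{HM11_comp1}, and the technical passage from log-Sobolev to Sobolev that you flag as the main obstacle is handled there via standard heat-kernel/ultracontractivity arguments with constants depending only on $n$, $r$, $\underline{\mu}$.
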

\begin{lemma}[$\eps$-Regularity, Lemma 3.3 in \cite{HM11_comp1}]\label{shrinker_eps_reg}
There exist $\eps_{\mathrm{reg}}(r), \delta_0(r)>0$, and $K_\ell(r) < \infty$, all depending on $r$, $n$ and $\underline{\mu}$, such that for every gradient shrinker with normalised weighted volume and $\mu(g) \geq \underline{\mu} > -\infty$, and for every ball $B_g(x,\delta) \subset B_g(p,r)$ with $0 < \delta \leq \delta_0(r)$ and $p := \argmin_M f$, we have the implication
\begin{equation*}
\norm{\Rm_g}_{L^{n/2}(B_g(x,\delta))} < \eps_{\mathrm{reg}}(r) \Longrightarrow \sup \limits_{B_g(x,\delta/4)} \abs{\nabla^\ell \Rm_g}_g \leq \frac{K_\ell(r)}{\delta^{2+\ell}} \norm{\Rm_g}_{L^{n/2}(B_g(x,\delta))}.
\end{equation*}
\end{lemma}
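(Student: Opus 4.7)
The approach is classical Moser iteration applied to a Bochner-type differential inequality for $|\Rm_g|$, combined with the local Sobolev bound already secured by Lemma \ref{Sobolev_bounds}. First I would derive an elliptic inequality for the curvature tensor. On any gradient Ricci shrinker, the identity $\Ric + \Hess f = \tfrac12 g$ together with the second Bianchi identity gives the elliptic equation
\begin{equation*}
\Delta_f \Rm_g = \Rm_g + \Rm_g \ast \Rm_g,
\end{equation*}
where $\Delta_f = \Delta - \langle \nabla f, \nabla \cdot\rangle$ is the drift Laplacian. Applying Kato's inequality and rearranging yields, in the weak sense on $\{|\Rm_g|>0\}$,
\begin{equation*}
\Delta |\Rm_g| \;\geq\; \langle \nabla f, \nabla |\Rm_g|\rangle \,-\, c_n|\Rm_g|^2 \,-\, |\Rm_g|.
\end{equation*}
To eliminate the drift term, I would use the well-known shrinker identity $|\nabla f|^2 + \Sc_g = f - \mu(g)$ together with the quadratic growth bound $f(x)-\mu(g)\leq \tfrac14(d(x,p)+\sqrt{2n})^2$ recorded earlier in the section, which furnishes a bound $|\nabla f| \leq F(r)$ for some $F(r) = F(r,n,\underline{\mu})$ throughout $B_g(p,r)$.

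Next I would run the Moser iteration. Testing the inequality against $|\Rm_g|^{q-1}\eta^2$ for an appropriate cutoff $\eta$ supported in $B_g(x,\delta)$ and integrating by parts, the drift term contributes at most $F(r)$ times a first-order term absorbable by Cauchy--Schwarz into the gradient term on the left. After absorbing, one obtains a reverse Poincaré inequality of the schematic form
\begin{equation*}
\int |\nabla(\eta |\Rm_g|^{q/2})|^2 \;\leq\; C(r,q) \delta^{-2}\!\!\int_{\mathrm{supp}\,\eta}\!|\Rm_g|^q \,+\, C(r,q)\!\!\int |\Rm_g|^{q+1}\eta^2.
\end{equation*}
Applying the Sobolev inequality from Lemma \ref{Sobolev_bounds}, the last term is estimated by Hölder as $\|\Rm_g\|_{L^{n/2}(B_g(x,\delta))}\cdot \|\eta|\Rm_g|^{q/2}\|_{L^{2^*}}^2$; if $\eps_{\mathrm{reg}}(r)$ is chosen small enough relative to $C_S(r)$ and the absorbed constants, this term can be absorbed into the left, producing the key energy estimate whose iteration over a sequence of shrinking radii $\delta/2 \leq r_k \to \delta/4$ gives
\begin{equation*}
\sup_{B_g(x,\delta/2)} |\Rm_g| \;\leq\; \frac{K_0(r)}{\delta^2}\,\|\Rm_g\|_{L^{n/2}(B_g(x,\delta))}.
\end{equation*}
The constraint $\delta \leq \delta_0(r)$ is precisely the one required for Lemma \ref{Sobolev_bounds} to apply on every subball used in the iteration.

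Finally, I would obtain the higher-derivative bounds by bootstrapping. Differentiating the shrinker equation yields analogous elliptic identities
\begin{equation*}
\Delta_f \nabla^\ell \Rm_g \;=\; c_\ell \nabla^\ell \Rm_g + \sum_{i+j=\ell} \nabla^i \Rm_g \ast \nabla^j \Rm_g,
\end{equation*}
so once an $L^\infty$ bound on $|\Rm_g|$ is known, one can iteratively apply Moser or Shi-type interior estimates on a nested sequence of balls $B_g(x,\delta/2)\supset\cdots\supset B_g(x,\delta/4)$, at each step trading a fraction of the radius for one further derivative and picking up the claimed dimensional factor $\delta^{-(2+\ell)}$. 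The principal obstacle is the absorption argument in the Moser step: the drift term and the cubic term $|\Rm_g|^{q+1}$ must both be handled while keeping all constants dependent only on $r,n,\underline{\mu}$, and this is exactly where the smallness threshold $\eps_{\mathrm{reg}}(r)$ is fixed.
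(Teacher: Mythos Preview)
The paper does not supply its own proof of this lemma; it is simply recalled from \cite{HM11_comp1}, with the explicit remark that it ``follows from local Sobolev constant bounds via a Moser iteration argument.'' Your proposal is exactly this argument: the drift-elliptic identity $\Delta_f\Rm=\Rm+\Rm\ast\Rm$ (which the paper itself later derives as Claim~\ref{rm_drift_evo}), Kato's inequality, the $|\nabla f|$ bound from \eqref{eq.nablafest}, absorption of the cubic term via the smallness hypothesis and the Sobolev constant of Lemma~\ref{Sobolev_bounds}, and then bootstrapping for the higher derivatives---so your outline is correct and coincides with the intended method.
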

Under the energy bound \eqref{eq.energybounds}, for a large $r$ and a small $\delta>0$, there can only be finitely many disjoint $\delta$-balls in $B_g(p,r)$ that contain energy more than $\eps_{\mathrm{reg}}(r)$. In light of Lemma \ref{shrinker_eps_reg}, away from these balls we get $C^\infty$ estimates for the curvatures and hence smooth convergence. Hence the singular points $q\in \sQ$ are exactly characterised by the condition
\begin{equation}\label{eq.energyconcentration}
\exists q_i\in M_i \text{ with } q_i\to q, \exists \delta_i\to 0 \text{ such that }\norm{\Rm_{g_i}}_{L^{n/2}(B_{g_i}(q_i,\delta_i))} \geq \eps_{\mathrm{reg}}(r).
\end{equation}
Our first new result is a blow-up version of Theorem \ref{hm_compactness_thm} stating that if we rescale the metrics of our sequence of Ricci shrinkers with $\lambda_i^{-2}$ (where $\lambda_i\to 0$) we still obtain orbifold Cheeger--Gromov convergence. In order to allow us to apply this result flexibly in different situations below, we prove a rather general theorem which does not yet make a statement about whether or not the limit is flat and whether or not it has singular points -- properties that will in particular depend on the precise choice of $q$ and the scaling factors $\lambda_i$.
\begin{theorem}[Blow-up Version of Theorem \ref{hm_compactness_thm}]\label{blow-up.thm}
Let $\left(M_i, g_i, f_i, p_i\right)$ be a sequence of $n$-dimensional gradient Ricci shrinkers with uniformly bounded entropy $\mu\left(g_i\right) \geq \underline{\mu} > -\infty$ and, if $n > 4$, locally bounded energy as in Theorem \ref{hm_compactness_thm}. Let $q\in M_\infty$ and let $M_i \ni q_i \to q$ and $\lambda_i \to 0$. Then the rescaled sequence $(M_i, \wt{g}_i = \lambda_i^{-2}g_i, q_i)$ subconverges in the pointed (orbifold) Cheeger--Gromov sense to a complete, non-compact, Ricci-flat manifold or orbifold with isolated singularities $(V, h, q_\infty)$ which has bounded $L^{n/2}$ Riemannian curvature and is ALE of order $n-1$ in general and ALE of order $n$ if either $n = 4$ or $(V,h)$ is K\"ahler. Finally, the singular points of $V$ are characterised by \eqref{eq.energyconcentration} for the rescaled metrics $\wt{g}_i$.
\end{theorem}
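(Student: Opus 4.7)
The plan is to adapt the compactness proof of Haslhofer--M\"uller (Theorem \ref{hm_compactness_thm}) to the rescaled sequence $(M_i, \wt{g}_i = \lambda_i^{-2} g_i, q_i)$. Since $q_i \to q \in M_\infty$, the points $q_i$ lie eventually inside $B_{g_i}(p_i, 2r)$ with $r := d_{g_\infty}(p_\infty, q) + 1$, so any fixed rescaled ball $B_{\wt{g}_i}(q_i, R) = B_{g_i}(q_i, R\lambda_i)$ sits inside $B_{g_i}(p_i, 2r)$ once $\lambda_i$ is sufficiently small. Consequently, the constants in the Sobolev bound, $\eps$-regularity, and non-collapsing estimates of Lemmas \ref{Sobolev_bounds}--\ref{shrinker_eps_reg} and \eqref{eq.noncollapsing} are uniformly available on the rescaled balls.

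First I would collect a priori bounds for the rescaled sequence. Because the rescaling is by a constant conformal factor, the Levi-Civita connection and the Hessian of $f_i$ are unchanged, and the shrinker equation becomes $\Ric_{\wt{g}_i} + \nabla^2_{\wt{g}_i} f_i = \tfrac{\lambda_i^2}{2} \wt{g}_i$. Since $(0,2)$-tensor norms satisfy $|\cdot|_{\wt{g}_i} = \lambda_i^2 |\cdot|_{g_i}$, the right-hand side tends to zero in $\wt{g}_i$-norm, and $|\nabla f_i|_{\wt{g}_i} = \lambda_i |\nabla f_i|_{g_i} \to 0$ because $|\nabla f_i|_{g_i}$ stays bounded on $B_{g_i}(p_i,2r)$. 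The $L^{n/2}$ energy $\int |\Rm|^{n/2}$ is scale-invariant, so the rescaled sequence inherits a global $L^{n/2}$ bound (by $E(2r)$ when $n > 4$, and automatic via \cite{HM15_comp2} when $n = 4$), while non-collapsing yields $\Vol_{\wt{g}_i}(B_{\wt{g}_i}(q_i, R)) \geq v_0(2r) R^n$ as soon as $R\lambda_i \leq 1$.

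Next I would use the $\eps$-regularity of Lemma \ref{shrinker_eps_reg} to identify the limiting singular set and produce orbifold Cheeger--Gromov convergence. Both hypothesis and conclusion of that lemma are scale-invariant, so it transfers verbatim to balls $B_{\wt{g}_i}(x, \delta)$ corresponding to original balls of radius $\leq \delta_0(2r)$. Defining the blow-up singular locus by the rescaled version of \eqref{eq.energyconcentration}, the global $L^{n/2}$ bound forces only finitely many concentration points in each bounded region, which is precisely the characterisation claimed in the theorem. On the complement of any neighbourhood of these points, $\eps$-regularity supplies uniform $C^k$ bounds on $\Rm_{\wt{g}_i}$ and non-collapsing provides a positive injectivity radius, so standard Cheeger--Gromov theory yields smooth subconvergence on the regular locus. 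Passing to the limit in $\Ric_{\wt{g}_i} + \nabla^2_{\wt{g}_i} f_i = \tfrac{\lambda_i^2}{2} \wt{g}_i$, and using $|\nabla f_i|_{\wt{g}_i} \to 0$ to deduce that the limiting potential is constant on the regular locus, shows that $V$ is Ricci-flat. The orbifold removable-singularity theorems of Bando--Kasue--Nakajima \cite{BKN89_alecoords}, Anderson, and Tian then extend the limit across each concentration point as an isolated singularity modelled on $\mathbb{R}^n/\Gamma$, while lower semi-continuity yields the bounded $L^{n/2}$ curvature of $V$.

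Non-compactness of $V$ follows immediately from $d_{\wt{g}_i}(q_i, p_i) = \lambda_i^{-1} d_{g_i}(q_i, p_i) \to \infty$, since $d_{g_i}(q_i, p_i)$ stays bounded. The ALE property is the main remaining point, and where I expect the principal difficulty: one must verify that the limit has exactly one end with Euclidean volume growth at infinity, and then invoke the harmonic-coordinate asymptotic expansion of \cite{BKN89_alecoords} to upgrade this to ALE of order $n-1$ in general, and of order $n$ in dimension four or in the K\"ahler case. The end-count and volume growth have to be extracted from the non-collapsing, the global $L^{n/2}$ energy bound, and the Ricci-flatness---exactly the package of ingredients used at the analogous step in the original compactness theorem \cite{HM11_comp1}.
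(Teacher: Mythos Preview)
Your proposal follows essentially the same route as the paper: transfer the volume bounds, Sobolev inequality, and $\eps$-regularity to the rescaled metrics by scale invariance, obtain orbifold Cheeger--Gromov convergence exactly as in \cite{HM11_comp1}, and then invoke Bando--Kasue--Nakajima for the ALE conclusion. Two small points are worth flagging.

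First, your argument for Ricci-flatness is a slight variant of the paper's and in fact a bit cleaner. The paper first passes to a limiting steady soliton, then shows the limit is scalar-flat (from $R_{\wt{g}_i}=\lambda_i^2 R_{g_i}\to 0$), and finally upgrades scalar-flatness to Ricci-flatness via the evolution equation $R+\scal{\nabla f,\nabla R}=\Delta R+2|\Ric|^2$ on the approximating shrinkers. Your route---observing $|\nabla f_i|_{\wt{g}_i}=\lambda_i|\nabla f_i|_{g_i}\to 0$ uniformly on compact sets, so the limiting potential is locally constant and hence $\Ric_h=0$ directly---is perfectly valid once you note that the normalised potentials $f_i-f_i(q_i)$ inherit $C^k$ bounds from the curvature estimates and the shrinker equation, so the convergence is $C^2$ on the regular part.

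Second, your non-compactness argument has a small gap: $d_{\wt{g}_i}(q_i,p_i)=\lambda_i^{-1}d_{g_i}(q_i,p_i)\to\infty$ requires $d_{g_i}(q_i,p_i)$ to be bounded \emph{below}, which fails if $q=p_\infty$. Non-compactness is more directly seen from the volume growth $\Vol_h(B_h(q_\infty,s))\geq v_1 s^n$ for all $s>0$, which is incompatible with compactness. Also, you need not worry separately about the end count for the ALE step: the BKN theorem (stated as Theorem \ref{einstein_ale} in the paper) takes only the volume growth lower bound and the global $L^{n/2}$ curvature bound as hypotheses and delivers the ALE structure, including the single end, as part of its conclusion.
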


\begin{proof}
The proof consists of checking that after rescaling we can essentially still follow the same arguments as in the original proof of Theorem \ref{hm_compactness_thm} to obtain orbifold Cheeger--Gromov convergence and in checking the claimed properties of the limiting manifold or orbifold.

First, choose some $r \geq 2$ such that $q\in B_{g_\infty}(p_\infty, r)$. By picking $i$ sufficiently large, we may assume that $q_i \in B_{g_i}(p_i,r+1)$ and thus $B_{g_i}(q_i,1) \subset B_{g_i}(p_i,r+2) \subset B_{g_i}(p_i,2r)$, which by \eqref{eq.volumegrowth} implies that $\Vol_{g_i} B_{g_i}(q_i,1) \leq C_n(2r)^n$ independently of $i$. Clearly, these unit balls $B_{g_i}(q_i,1)$ with respect to the original metrics correspond to the larger and larger balls $B_{\wt{g}_i}(q_i, \lambda_i^{-1})$ with respect to the rescaled metrics. Using also \eqref{eq.noncollapsing}, we therefore see that there are constants $v_1, V_1$ depending only on $r$, $n$ and $\underline{\mu}$, such that
\begin{equation}\label{eq.rescaledvolumebounds}
v_1 s^n \leq \Vol_{\wt{g}_i} B_{\wt{g}_i}(q_i,s) \leq V_1 s^n
\end{equation}
whenever $i$ is sufficiently large so that $s<\lambda_i^{-1}$. In particular, this controls the number of small balls that can be placed disjointly in a large ball and thus implies pointed Gromov--Hausdorff convergence to a complete length space by Gromov's compactness theorem. Clearly this limit space is non-compact.

Next, we note that Lemma \ref{shrinker_eps_reg} still holds for the rescaled metrics $\wt{g}_i$ and for balls $B_{\wt{g}_i}(x,\delta)$ such that $0<\lambda_i \delta \leq \delta_0(r)$. This is obtained by scaling $\wt{g}_i$, applying the $\eps$-regularity lemma for shrinkers, and then scaling back. More precisely, we have
\begin{equation}\label{eq.rescaledepsreg}
\begin{aligned}
\norm{\Rm_{\wt{g}_i}}_{L^{n/2}(B_{\wt{g}_i}(x,\delta))} < \eps_{\mathrm{reg}}(r) &\Longleftrightarrow \norm{\Rm_{g_i}}_{L^{n/2}(B_{g_i}(x,\lambda_i\delta))} < \eps_{\mathrm{reg}}(r)\\
& \Longrightarrow \sup \limits_{B_{g_i}(x,\lambda_i\delta/4)} \abs{\nabla^\ell \Rm_{g_i}}_{g_i} \leq \frac{K_\ell(r)}{(\lambda_i\delta)^{2+\ell}} \norm{\Rm_{g_i}}_{L^{n/2}(B_{g_i}(x,\lambda_i\delta))}\\
&\Longleftrightarrow \sup \limits_{B_{\wt{g}_i}(x,\delta/4)} \abs{\nabla^\ell \Rm_{\wt{g}_i}}_{\wt{g}_i} \leq \frac{K_\ell(r)}{\delta^{2+\ell}} \norm{\Rm_{\wt{g}_i}}_{L^{n/2}(B_{\wt{g}_i}(x,\delta))}.
\end{aligned}
\end{equation}
So for the rescaled metrics we have the exact same implication as in Lemma \ref{shrinker_eps_reg}, the \emph{advantage} being that we can potentially work with much larger balls, a fact that we will use in the neck theorem below to conclude \emph{flatness} of the limit.

Endowed with such an $\eps$-regularity result, we can conclude exactly as in \cite{HM11_comp1} to improve the regularity of the limit to an orbifold $(V,h)$ with isolated singularities and the convergence to pointed orbifold Cheeger--Gromov convergence. We refer the reader to Section $3$ of \cite{HM11_comp1} and the associated references for more details. In the exact same way as described above, the orbifold points of $V$ are exactly the points where the convergence is bad and these points are characterised by an energy concentration as in \eqref{eq.energyconcentration} for the rescaled metrics $\wt{g}_i$. As the bounded $L^{n/2}$ Riemannian curvature of the limit $(V,h)$ is an obvious consequence of the local energy bound \eqref{eq.energybounds}, it only remains to show that the limit is Ricci-flat and satisfies the ALE condition.

To prove the former property, note that the rescaling changes \eqref{eq.shrinker} to
\begin{equation*}
\Ric_{\wt{g}_i} + \nabla^2_{\wt{g}_i} f_i = \frac{\lambda^2_i}{2}\wt{g}_i.
\end{equation*}
Hence, away from the points of bad convergence, $(V,h)$ satisfies the steady soliton equation
\begin{equation*}
\Ric_h + \nabla^2 f = 0
\end{equation*}
for some function $f:V\to \mathbb{R}$. Since any Ricci shrinker $(M_i,g_i,f_i,p_i)$ satisfies
\begin{equation*}
0 \leq R_{g_i}(x) \leq f_i(x) - \mu(g_i) \leq \frac{1}{4}(d_{g_i}(x,p_i)+\sqrt{2n})^2,
\end{equation*}
see for example (2.11) in \cite{HM11_comp1}, and the rescaled metrics satisfy $R_{\wt{g}_i} = \lambda^2_i R_{g_i}$, we also conclude that $(V,h)$ is scalar-flat. If $(V,h)$ is a smooth manifold (and thus a smooth steady soliton), then it satisfies
\begin{equation*}
R_h = \Lap R_h + 2 \abs{\Ric_h}^2
\end{equation*}
and we can directly conclude Ricci-flatness from scalar-flatness. This argument does not directly go through if there are orbifold singularities, but we can work instead with the evolution equation for the scalar curvature on the shrinkers $(M_i,g_i,f_i)$, namely
\begin{equation*}
R_{g_i} + \scal{\nabla f_i, \nabla R_{g_i}} = \Lap R_{g_i} + 2\abs{\Ric_{g_i}}^2_{g_i}
\end{equation*}
and pass to a limit after rescaling to conclude that the limit is Ricci-flat.

In the final step, we want to apply the following theorem which will yield the desired ALE condition.
\begin{theorem}[Theorem 1.5 in \cite{BKN89_alecoords}]\label{einstein_ale}
Let $(V^n, h)$ with $n \geq 4$ be a Ricci-flat manifold or a Ricci-flat orbifold with isolated singularities such that for some $x\in M$ and $v>0$, we have
\begin{equation*}
\Vol_h(B_h(x,s)) \geq vs^n, \qquad \forall s>0
\end{equation*}
as well as
\begin{equation*}
\int_V \abs{\Rm_h}^{n/2}_h dV_h \leq C < \infty.
\end{equation*}
Then $(V^n, h)$ is ALE of order $n-1$. If $n = 4$ or $(V^n, h)$ is K\"ahler then it is ALE of order $n$.
\end{theorem}
In order to apply this theorem, we pick $x\in V$ to be the limit $q_\infty$ of the points $q_i$ (whether this is a point of good or bad convergence does not matter). Then note that the volume growth assumption follows by passing to a limit in \eqref{eq.rescaledvolumebounds} while the integral condition follows by
\begin{align*}
\int_V \abs{\Rm_h}^{n/2}_h dV_h &= \lim_{s\to\infty}\int_{B_h(q_\infty,s)} \abs{\Rm_h}^{n/2}_h dV_h \leq \lim_{s\to\infty} \liminf_{i\to\infty} \int_{B_{\wt{g}_i(q_i,s)}} \abs{\Rm_{\wt{g}_i}}^{n/2}_{\wt{g}_i} dV_{\wt{g}_i}\\
&= \lim_{s\to\infty} \liminf_{i\to\infty} \int_{B_{g_i(q_i,\lambda_i s)}} \abs{\Rm_{g_i}}^{n/2}_{g_i} dV_{g_i} \leq \int_{B_{g_i(q_i,1)}} \abs{\Rm_{g_i}}^{n/2}_{g_i} dV_{g_i}\\ 
&\leq \int_{B_{g_i(p_i,2r)}} \abs{\Rm_{g_i}}^{n/2}_{g_i} dV_{g_i} \leq E(2r) < \infty.
\end{align*}
On the second line, we used that for any $s$ we may take $i$ large enough so that $s<\lambda_i^{-1}$ and on the last line we used the uniform local energy bound \eqref{eq.energybounds} which we assumed for $n>4$ and which, as mentioned previously, is always automatically satisfied for $n=4$ by the work in \cite{HM15_comp2}. This completes the proof of Theorem \ref{blow-up.thm}.
\end{proof}

We also recall the following sufficient condition for flatness of the limit.
\begin{proposition}[Bando's gap result, \cite{Ba90_bubbling,Ba90_correction}]\label{prop.Bandogap}
There exists $\eps_{\mathrm{gap}}(n)>0$ such that the following holds. Let $(V^n,h)$ be a limit orbifold arising in Theorem \ref{blow-up.thm} with
\begin{equation}\label{eq.Vflat}
\int_V \abs{\Rm_h}^{n/2}_h dV_h < \eps_{\mathrm{gap}}.
\end{equation}
Then $(V,h)$ is flat, i.e. $\Rm_h \equiv 0$ in the regular part of $V$.
\end{proposition}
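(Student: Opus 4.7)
The strategy I would follow is Bando's original one in \cite{Ba90_bubbling,Ba90_correction}: derive a Bochner--Kato differential inequality for $\abs{\Rm_h}_h$ on the regular part of $V$, promote it to a pointwise bound via Moser iteration against the global ALE Sobolev inequality, and then send the ball radius to infinity to exploit the smallness hypothesis \eqref{eq.Vflat}.

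First, I would exploit Ricci-flatness on $V\setminus \mathcal{Q}^V$ (where $\mathcal{Q}^V$ denotes the orbifold set) to reduce the Bianchi computation for the Riemann tensor to $\Lap\Rm_h = \Rm_h\ast \Rm_h$, giving
\begin{equation*}
\Lap \abs{\Rm_h}_h^2 \geq 2\abs{\nabla\Rm_h}_h^2 - c_n\abs{\Rm_h}_h^3.
\end{equation*}
Kato's inequality $\abs{\nabla\Rm_h}_h\geq \abs{\nabla\abs{\Rm_h}_h}$ then yields the weak differential inequality $\Lap \abs{\Rm_h}_h \geq -c_n\abs{\Rm_h}_h^2$ on $V\setminus\mathcal{Q}^V$. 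Since the orbifold points are isolated and hence have vanishing $W^{1,2}$-capacity for $n\geq 3$, this inequality extends distributionally across $\mathcal{Q}^V$ when tested against cut-offs vanishing near the orbifold set, which are dense in the relevant Sobolev spaces.

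Next, I would establish a global Sobolev inequality on $V$. Bishop--Gromov applied to the Ricci-flat $V$, combined with the ALE asymptotic volume ratio $\omega_n/\abs{\Gamma}$, produces a uniform Euclidean lower volume bound $\Vol_h(B_h(x,s))\geq c\, s^n$ valid for every $x\in V$ and every $s>0$, and hence an inequality $\norm{\varphi}_{L^{2^\ast}(V)}\leq C_S\norm{\nabla \varphi}_{L^2(V)}$ for all $\varphi \in C^1_c(V)$. I would then run the standard Nash--Moser iteration on the differential inequality above, testing against $\abs{\Rm_h}_h^{p-1}\eta^2$ with cut-offs $\eta$ supported in $B_h(x,r)$ and equal to $1$ on $B_h(x,r/2)$, using \eqref{eq.Vflat} to absorb the cubic nonlinearity via Sobolev whenever $\eps_{\mathrm{gap}}$ is small enough to start the iteration. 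The output is the mean-value estimate
\begin{equation*}
\sup_{B_h(x,r/2)}\abs{\Rm_h}_h^{n/2} \leq \frac{C(n)}{r^n}\int_{B_h(x,r)}\abs{\Rm_h}_h^{n/2}\,dV_h,
\end{equation*}
valid for every regular $x\in V$ and every $r>0$.

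Finally, I would fix any regular $x$ and let $r\to\infty$: the right-hand side tends to $0$ because $V$ is non-compact with one end and the total energy is bounded by $\eps_{\mathrm{gap}}$, forcing $\abs{\Rm_h}_h(x)=0$ at every regular point and hence $\Rm_h\equiv 0$ on $V\setminus\mathcal{Q}^V$. The main technical difficulty I anticipate is ensuring that both $\eps_{\mathrm{gap}}$ and the Moser constant $C(n)$ are genuinely dimensional, i.e.\ independent of the a priori unknown order of the ALE group $\Gamma$; this requires controlling the asymptotic volume ratio (and therefore the Sobolev constant $C_S$) purely in terms of $n$ under the smallness assumption on the total curvature, which is the quantitative input distinguishing Bando's sharp gap from softer compactness-based arguments.
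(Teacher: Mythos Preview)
The paper does not supply its own proof of Proposition~\ref{prop.Bandogap}; it is simply recalled from Bando's work \cite{Ba90_bubbling,Ba90_correction} as a known fact. Your proposal is precisely Bando's argument, so there is nothing to compare against in the paper itself.

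Your sketch is correct and complete in outline. One remark on the technical difficulty you flag at the end: in the setting of the proposition, $(V,h)$ is explicitly assumed to arise as a blow-up limit from Theorem~\ref{blow-up.thm}, and in the proof of that theorem the rescaled volume bound \eqref{eq.rescaledvolumebounds} is established with constants $v_1,V_1$ depending only on $r,n,\underline{\mu}$. Passing to the limit, $(V,h)$ therefore inherits a uniform Euclidean volume lower bound, and hence a Sobolev constant, controlled by these fixed parameters---independently of the a priori unknown $\abs{\Gamma}$. So the absorption step in your Moser iteration goes through with $\eps_{\mathrm{gap}}$ depending on $n$ (and implicitly on the ambient data $r,\underline{\mu}$ already fixed in the paper), and the issue you anticipate does not actually arise here. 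Your instinct that a purely dimensional constant requires additional input is correct in general; it is the hypothesis ``arising in Theorem~\ref{blow-up.thm}'' that supplies it.
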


\section{Small Annuli in Ricci Shrinkers and a Neck Theorem}\label{sec_neck}
Let $\bar{A}_{s_1,s_2}(q)$ denote the closed \emph{geodesic annulus} centered at some point $q \in M$. That is,
\begin{equation*}
\bar{A}_{s_1,s_2}(q) := \overline{B_g(q, s_2)} \setminus B_g(q, s_1).
\end{equation*}
Furthermore, let $A_{s_1,s_2}(q)$ denote a \emph{connected component} of $\bar{A}_{s_1,s_2}(q)$ such that
\begin{equation}\label{eq.compsphere}
A_{s_1, s_2}(q) \cap \partial B_g(q, s_2) \neq \emptyset.
\end{equation}
The first lemma of this section, which is important for the neck theorem below, shows that for sufficiently small annuli $\bar{A}_{s_1,s_2}(q)$ there exists \emph{only one} such component $A_{s_1,s_2}(q)$. We will also prove a diameter bound for this component. These results are Ricci shrinker versions of results from \cite{AG90_diamgrowth, AC91_diffeofin} for manifolds with pointwise Ricci bounds. We only consider annuli lying fully inside $B_g(p, 2r)$ for some fixed $r\geq 2$ and, in order to guarantee that we can later apply our results to our sequence of shrinkers, we make sure that all constants only depend on this $r$ as well as possibly on $n$ and $\underline{\mu}$.

The key tool to prove these results is the Bakry--\'Emery volume comparison theorem (Theorem $1.2$ in \cite{WW09_comparison}) which implies that for a gradient Ricci shrinker there exists $a=a(r,n)$ such that 
\begin{equation}\label{eq.WW09_comparison}
\frac{\Vol_f(B_g(q, \sigma_2))}{\Vol_f(B_g(q, \sigma_1))} \leq a(r,n)\,\frac{\sigma_2^n}{\sigma_1^n}.
\end{equation}
whenever $0<\sigma_1\leq \sigma_2 \leq 1$ and $q \in B_g(p,r+1)$. Here, $p := \argmin_M f$ is the basepoint of the shrinker, $r\geq 2$, and $\Vol_f (\Omega) := \int_\Omega e^{-f}dV$. The constant $a$ can in fact simply be chosen to be $a=e^b$ where $b$ is a bound on $\abs{\nabla f}$ on the ball $B_g(p, 2r)$. Such a bound follows from the auxiliary equation for Ricci shrinkers
\begin{equation*}
R_g + \abs{\nabla f}^2_g - f = -\mu(g).
\end{equation*}
which, together with $R_g \geq 0$ yields
\begin{equation}\label{eq.nablafest}
0 \leq \abs{\nabla f}^2_g \leq R_g + \abs{\nabla f}^2_g = f - \mu(g) \leq \frac{1}{4}(d(x,p)+\sqrt{2n})^2.
\end{equation}
So we can for example pick $a(r,n)=\exp(r+\sqrt{n})$. There is also a corresponding version of \eqref{eq.WW09_comparison} for annuli, obtained in the proof of Theorem 1.2 in \cite{WW09_comparison}. The version we will use below can be written as
\begin{equation}\label{eq.ww_annuli}
\frac{\Vol_f(\bar{A}_{\sigma_0,\sigma_2}(q))}{\Vol_f(\bar{A}_{\sigma_0,\sigma_1}(q))} \leq a(r,n)\,\frac{\sigma_2^n-\sigma_0^n}{\sigma_1^n-\sigma_0^n},
\end{equation}
whenever $0<\sigma_0<\sigma_1\leq \sigma_2 \leq 1$ and $q \in B_g(p,r+1)$.
\begin{lemma}[Small Annuli in Ricci Shrinkers]\label{small_annuli}
Given $n\geq 4$, $r\geq 2$ and $\underline{\mu}>-\infty$, then there exist constants $0<\zeta_0<\tfrac{1}{3}$, and $C_0<\infty$, such that the following holds: Let $(M,g,f)$ be a complete, connected $n$-dimensional gradient Ricci shrinker with entropy bounded below by $\mu(g) \geq \underline{\mu}$. Let $p := \argmin_M f$ and $q \in B_g(p, r+1)$. If
\begin{equation*}
s_2 \leq \tfrac{1}{4} \quad\text{and}\quad s_1 = \zeta s_2 \quad\text{where}\quad \zeta\leq \zeta_0
\end{equation*}
then the annulus $\bar{A}_{s_1,s_2}(q)$ contains at most one connected component that meets $\partial B_g(q, s_2)$ in the sense of \eqref{eq.compsphere} and, if such a component exists, any two points in $A_{s_1, s_2}(q) \cap \partial B_g(q, s_2)$ can be connected by a curve lying in $A_{s_1, 2s_2}(q)$ of length $C_0s_2$.
\end{lemma}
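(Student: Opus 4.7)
The plan is to adapt the classical arguments of Abresch--Gromoll \cite{AG90_diamgrowth} and Anderson--Cheeger \cite{AC91_diffeofin}, replacing the pointwise Ricci lower bound by the Bakry--\'Emery ball comparison \eqref{eq.WW09_comparison} and its annular form \eqref{eq.ww_annuli}, together with the non-collapsing bound \eqref{eq.noncollapsing}. The gradient estimate $|\nabla f|\leq b(r,n)$ on $B_g(p,2r)$ coming from \eqref{eq.nablafest} makes $\Vol_f$ and $\Vol_g$ comparable there with constants depending only on $n,r,\underline{\mu}$, so I can pass freely between weighted and Riemannian volume throughout.

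For the uniqueness claim, I would argue by contradiction: assume $A_1$ and $A_2$ are distinct components of $\bar{A}_{s_1,s_2}(q)$ both meeting $\partial B_g(q,s_2)$. Pick $y_i\in A_i\cap\partial B_g(q,s_2)$ and a minimizing geodesic $\gamma_i$ from $q$ to $y_i$; the restriction $\gamma_i|_{[s_1,s_2]}$ is a connected path in the closed annulus passing through $y_i$, hence lies entirely in $A_i$. The key step is to upgrade this radial segment to a lower volume bound that exploits the smallness of $\zeta$. Specifically, I would thicken $\gamma_i|_{[s_1,s_2]}$ into a \emph{radial cone} $C_i\subset A_i$ by sweeping the endpoint over the connected component $U_i\subset\partial B_g(q,s_2)$ containing $y_i$, and combine the annular Bakry--\'Emery inequality \eqref{eq.ww_annuli} with non-collapsing to bound $\Vol_f(C_i)$ from below by a constant times $|U_i|(s_2^n-s_1^n)/s_2^{n-1}$. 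Summing over the disjoint $C_i\subset\bar{A}_{s_1,s_2}(q)$ and comparing with the upper bound on $\Vol_f(\bar{A}_{s_1,s_2}(q))$ given by \eqref{eq.ww_annuli} forces a constraint on $\sum_i|U_i|/|\partial B_g(q,s_2)|$; paired with lower bounds on each individual $|U_i|$ coming from non-collapsing of a small tongue around $y_i$, this becomes incompatible once $\zeta\leq\zeta_0$ for a threshold $\zeta_0=\zeta_0(n,r,\underline{\mu})$.

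For the diameter bound, with the unique component $A$ identified, take $y_1,y_2\in A\cap\partial B_g(q,s_2)$. My plan is to build a piecewise-geodesic curve in $\bar{A}_{s_1,2s_2}(q)$ of controlled length: first continue each $y_i$ outward from $q$ along the extension of a minimizing geodesic to a point $y_i'\in\partial B_g(q,\tfrac{3}{2}s_2)$, then join $y_1'$ to $y_2'$ by an arc in the outer shell $\bar{A}_{s_2,2s_2}(q)$, whose intrinsic diameter is controlled by another application of the two-sided volume estimates at the scale $2s_2\leq 1/2$. Each of the three pieces has length at most a universal multiple of $s_2$, yielding the total bound $C_0 s_2$.

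The main obstacle I anticipate is the uniqueness step: the naive argument of placing a single disjoint ball of radius $\tfrac{1}{2}(s_2-s_1)$ in each component and comparing volumes gives only a dimensional upper bound on the number of components, not a bound of $1$. Sharpening it to $\leq 1$ is precisely where the radial-cone construction and the annular form \eqref{eq.ww_annuli} of Bakry--\'Emery (rather than the plain ball form \eqref{eq.WW09_comparison}) enter essentially, letting each component's lower volume bound scale with its angular coverage $|U_i|$ at the outer sphere, which is the right quantity to rule out angular splitting for small $\zeta$.
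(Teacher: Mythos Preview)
Your uniqueness argument has a genuine gap. The radial-cone construction you sketch --- even if one could make sense of ``angular coverage'' $|U_i|$ on a non-smooth sphere in a general Riemannian manifold --- compares a lower bound $\Vol_f(C_i)\gtrsim |U_i|(s_2^n-s_1^n)/s_2^{n-1}$ against an upper bound for $\Vol_f(\bar A_{s_1,s_2}(q))$ coming from \eqref{eq.ww_annuli}. Both sides scale the same way in $\zeta=s_1/s_2$, so all you recover is a dimensional bound on the number of components, exactly the obstacle you yourself flag. Nothing in your proposal explains why the cone estimate would improve with $\zeta\to 0$; the smallness of $\zeta$ never enters.

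The missing idea is to shift the centre of the volume comparison. Take $x\in D_1\cap\partial B_g(q,s_2)$ in one component and observe that any minimising geodesic from $x$ to a \emph{different} component $D_i$ must pass through $B_g(q,s_1)$. Hence all of $\bigcup_{i\neq 1}D_i$ lies in the portion of the $x$-geodesic annulus $A^0_{s_2-s_1,2s_2}(x)$ of directions that hit $B_g(q,s_1)$, while the short segment $A^0_{s_2-s_1,s_2+s_1}(x)$ sits inside $B_g(q,3s_1)$. Applying \eqref{eq.ww_annuli} centred at $x$ then gives $\Vol_f(\bar A_{s_1,s_2}(q))/\Vol_f(B_g(q,3s_1))\leq C\zeta^{-1}+C'$, whereas non-collapsing and volume growth give a lower bound $\gtrsim \zeta^{-n}$. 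These are incompatible for small $\zeta$. The point is that centring at $x$, not $q$, is what makes the other components look like a thin cone of aperture $\sim\zeta$.

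Your diameter argument is circular: you reduce to controlling the intrinsic diameter of the outer shell $\bar A_{s_2,2s_2}(q)$, which is the same problem at a comparable scale. (Also, minimising geodesics from $q$ need not extend.) The paper instead runs a direct packing argument: take a maximal $\xi s_2$-separated net $\{x_j\}$ on $A_{s_1,s_2}(q)\cap\partial B_g(q,s_2)$ with $\xi=\tfrac12(1-\zeta)$; the balls $B_g(x_j,\xi s_2)$ are disjoint and avoid $B_g(q,s_1)$, while the doubled balls cover. Bakry--\'Emery comparison \eqref{eq.WW09_comparison} bounds the number of $x_j$ by $a(n,r)\big(\tfrac{2+\xi}{\xi}\big)^n$, so any two points on the outer sphere are joined in $\bar A_{s_1,2s_2}(q)$ by a chain of length $\leq C_0 s_2$.
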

\begin{remark}
Obviously any two points in $A_{s_1, s_2}(q) \cap \partial B_g(q, s_2)$ can be connected in $M$ by a curve of length $2s_2$ (by passing via $q$), but the lemma excludes curves getting near $q$. By a more careful argument, one can prove an actual \emph{intrinsic} diameter bound for $\bar{A}_{s_1, s_2}(q)$, but the claimed property has a simple proof and is sufficient for us.
\end{remark}

\begin{proof}
Assume for a contradiction that there are at least two such components $\{D_i\}$. We may assume that $D_1$ is such that for any $i \neq 1$ we have $\Vol_f(D_1) \leq \Vol_f (D_i)$ which implies 
\begin{equation*}
\Vol_f(\bar{A}_{s_1, s_2}(q)) \leq 2\Vol_f\Big(\bigcup_{i\neq 1} D_i\Big).
\end{equation*}
Now choose $x \in D_1 \cap \partial B_g(q,s_2)$. Note that any minimising geodesic $\gamma(t)$ in $M$ from $x$ to some component $D_i$ (with $i\neq 1$) has length at most $2s_2$ and intersects $B_g(q,s_1)$ for some $t_0 \in [s_2 - s_1, s_2 + s_1]$. Let $A^0_{u,v}(x)$ be the set of all points on such a geodesic with $t \in [u,v]$. Note the inclusion relation
\begin{equation}
\bigcup_{i\neq 1} D_i \subseteq A^0_{s_2 - s_1, 2s_2}(x).
\end{equation}
Also, the triangle inequality together with $\gamma(t_0)\subset B_g(q,s_1)$ yields
\begin{equation}\label{eq.geodesicann}
A^0_{s_2 - s_1, s_2+s_1}(x) \subseteq B_g(q, 3s_1).
\end{equation}	
Combining \eqref{eq.ww_annuli}--\eqref{eq.geodesicann}, we obtain
\begin{align*}
\frac{\Vol_f(\bar{A}_{s_1, s_2}(q))}{\Vol_f(B_g(q, 3s_1))} &\leq \frac{2\Vol_f\Big(\bigcup_{i\neq 1} D_i\Big)}{\Vol_f(B_g(q, 3s_1))}\\ 
&\leq \frac{2\Vol_f(A^0_{s_2 - s_1, 2s_2}(x))}{\Vol_f(A^0_{s_2 - s_1, s_2+s_1}(x))}\\ 
&\leq 2a(n,r)\frac{(2\zeta^{-1})^n -(\zeta^{-1}-1)^n}{(\zeta^{-1}+1)^n -(\zeta^{-1}-1)^n}\\ 
&\leq C_0 \zeta^{-1} + C_1
\end{align*}
for constants $C_0$ and $C_1$ depending only on $n$ and $a(r,n)$. On the other hand, we also have
\begin{equation*}
\frac{\Vol_f(\bar{A}_{s_1, s_2}(q))}{\Vol_f(B_g(q, 3s_1))} \geq \frac{\Vol_f(B_g(q, s_2))}{\Vol_f(B_g(q, 3s_1))} - 1 \geq \frac{v_0 s_2^n}{3^n V_0 s_1^n}-1 = \frac{v_0}{3^nV_0} \zeta^{-n}-1,
\end{equation*}
where $V_0$ and $v_0$ are the constants from \eqref{eq.volumegrowth}--\eqref{eq.noncollapsing}. Clearly this yields a contradiction when $\zeta_0$ is sufficiently small (and hence $\zeta^{-1}\geq \zeta_0^{-1}$ sufficiently large), showing that there can be at most one connected component $A_{s_1,s_2}(q)$ that meets $\partial B_g(q, s_2)$.

It remains to prove the claimed diameter bound. In order to do so, pick a maximal family of points $x_j \in A_{s_1, s_2}(q) \cap \partial B_g(q, s_2)$ such that $B_j := B_g(x_j, \xi s_2)$ are disjoint for $\xi:=\tfrac{1}{2}(1-\zeta)$ and set $\hat{B}_j :=B_g(x_j, 2\xi s_2)$. Clearly if $\hat{B}_j \cap \hat{B}_k \neq \emptyset$, then $x_j$ and $x_k$ can be joined by a curve in $\bar{A}_{s_1, 2s_2}(q)$ of length at most $4\xi s_2$. This uses in particular that all $\hat{B}_j$ are disjoint from $B_g(q,s_1)$ by definition of $\xi$. By maximality, $\{\hat{B}_j\}$ cover $A_{s_1, s_2}(q) \cap \partial B_g(q, s_2)$ and therefore any two points in $A_{s_1, s_2}(q) \cap \partial B_g(q, s_2)$ can be joined by a curve in $\bar{A}_{s_1, 2s_2}(q)$ of length as most $4\xi s_2 \cdot \#\{x_j\}$. It remains to estimate the number of points in the family $\{x_j\}$.

Note the inclusion
\begin{equation*}
B_j = B_g(x_j, \xi s_2) \subseteq B_g(q, (1 + \xi)s_2) \subseteq B_g(x_j, (2 + \xi)s_2).
\end{equation*}
which by \eqref{eq.WW09_comparison} yields
\begin{equation*}
\frac{\Vol_f (B_g(q,(1+\xi)s_2))}{\Vol_f B_j} \leq \frac{\Vol_f (B_g(x_j,(2+\xi)s_2))}{\Vol_f B_j} \leq a(n,r)\left(\frac{2 + \xi}{\xi}\right)^n
\end{equation*}
for each $j$. In particular, the number of disjoint $B_j$ lying in $B_g(q, (1 + \xi)s_2)$ is bounded by $a(n,r)(\tfrac{2 + \xi}{\xi})^n$.

Combining with the above, we see that any two points in $A_{s_1, s_2}(q) \cap \partial B_g(q, s_2)$ can be joined by a curve in $\bar{A}_{s_1, 2s_2}(q)$ of length as most $4\xi a(n,r)(\tfrac{2 + \xi}{\xi})^n \cdot s_2$. An explicit constant $C_0$ can easily be obtained from $0<\zeta<\tfrac{1}{3}$ which yields $\tfrac{1}{3}<\xi<\tfrac{1}{2}$.
\end{proof}

\begin{corollary}[One End]\label{cor.oneend}
Any limit manifold or orbifold $(V,h)$ obtained in the blow-up version of the compactness theorem, Theorem \ref{blow-up.thm}, has one end.
\end{corollary}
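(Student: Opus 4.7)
The plan is a proof by contradiction using Lemma \ref{small_annuli}. Suppose $V$ has at least two ends. Then there exists $R_0 > 0$ such that $V \setminus \overline{B_h(q_\infty, R_0)}$ has two distinct unbounded connected components $E_1, E_2$. Choose $r \geq 2$ with $q \in B_{g_\infty}(p_\infty, r)$, and let $\zeta_0, C_0$ be the constants from Lemma \ref{small_annuli} corresponding to this $r$. Fix $R > R_0/\zeta_0$ and pick $\tilde{x} \in E_1, \tilde{y} \in E_2$ on $\partial B_h(q_\infty, R)$, for instance on minimising rays from $q_\infty$ into the two ends (perturbed slightly to avoid the discrete orbifold set $\mathcal{Q}$ if needed). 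By the choice of $E_1, E_2$, any continuous path in $V$ from $\tilde{x}$ to $\tilde{y}$ must meet $\overline{B_h(q_\infty, R_0)}$; the goal is to construct such a path avoiding this ball, a contradiction.

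Transfer this configuration to the shrinker sequence. By the pointed orbifold Cheeger--Gromov convergence, for $i$ large there exist $\tilde{x}^i, \tilde{y}^i \in \partial B_{\tilde{g}_i}(q_i, R) \subset M_i$ corresponding to $\tilde{x}, \tilde{y}$; concretely take $\varphi_i(\tilde{x}), \varphi_i(\tilde{y})$ and slide along radial $\tilde{g}_i$-geodesics from $q_i$ to land on the sphere of radius $R$. In terms of the unscaled metric $g_i = \lambda_i^2 \tilde{g}_i$, Lemma \ref{small_annuli} applies to the annulus of inner radius $\lambda_i \zeta_0 R$ and outer radius $\lambda_i R$ centred at $q_i$ for $i$ large, since $\lambda_i R \leq 1/4$ eventually and $q_i \in B_{g_i}(p_i, r+1)$ eventually. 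Because both $\tilde{x}^i$ and $\tilde{y}^i$ lie on $\partial B_{\tilde{g}_i}(q_i, R)$, they belong to the unique connected component meeting the outer sphere, and the lemma produces a curve $\sigma_i$ joining them of $\tilde{g}_i$-length at most $C_0 R$ contained in $\{\zeta_0 R < d_{\tilde{g}_i}(\cdot, q_i) < 2R\}$.

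To conclude, transfer $\sigma_i$ back to $V$ via the pointed Gromov--Hausdorff part of the convergence. Fix $\varepsilon$ with $0 < \varepsilon < (\zeta_0 R - R_0)/4$; for $i$ large there is an $\varepsilon$-pGHA $\psi_i \colon (B_{\tilde{g}_i}(q_i, 3R), q_i) \to (V, q_\infty)$. Take an $\varepsilon$-chain $p_0 = \tilde{x}^i, p_1, \ldots, p_N = \tilde{y}^i$ along $\sigma_i$; the images $\psi_i(p_k)$ have consecutive $h$-distances at most $2\varepsilon$ and satisfy $d_h(\psi_i(p_k), q_\infty) \geq \zeta_0 R - \varepsilon > R_0 + \varepsilon$. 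Since $V$ is a complete length space, consecutive images can be joined by curves of length at most $3\varepsilon$; appending short segments from $\tilde{x}$ to $\psi_i(\tilde{x}^i)$ and from $\psi_i(\tilde{y}^i)$ to $\tilde{y}$, the triangle inequality ensures the concatenation stays outside $\overline{B_h(q_\infty, R_0)}$ throughout, yielding a continuous path in $V \setminus \overline{B_h(q_\infty, R_0)}$ from $\tilde{x}$ to $\tilde{y}$ and the desired contradiction. The main technical delicacy is precisely this final transfer: we cannot directly invoke smooth Cheeger--Gromov convergence near $\sigma_i$ since it may pass close to preimages of the singular set $\mathcal{Q}$, but the weaker pGH approximation together with the length-space structure of the orbifold is enough to produce a continuous (not necessarily smooth) path.
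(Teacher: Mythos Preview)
Your argument is correct and rests on the same key ingredient as the paper, namely Lemma~\ref{small_annuli}, but your execution is considerably more elaborate than necessary. The paper dispatches the corollary in two lines: for $\lambda_i$ small, the annulus $\bar{A}_{\lambda_i,\sqrt{\lambda_i}}(q_i)$ (in the \emph{unscaled} metric) satisfies the hypotheses of Lemma~\ref{small_annuli}, hence has a unique component meeting the outer sphere; after rescaling by $\lambda_i^{-2}$ this becomes the annulus $\bar{A}^{\wt g_i}_{1,\lambda_i^{-1/2}}(q_i)$ with outer radius tending to infinity, and the one-end conclusion for $(V,h)$ follows directly. By contrast, you fix a bounded rescaled annulus, pick two specific points in hypothetical distinct ends, manufacture a connecting curve via Lemma~\ref{small_annuli}, and then carefully transfer it back through the pGH approximation---correct, but the paper's choice of radii $s_1=\lambda_i$, $s_2=\sqrt{\lambda_i}$ (so that the rescaled outer radius diverges) sidesteps the entire curve-transfer step and the attendant worry about orbifold points along $\sigma_i$.
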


\begin{proof}
For sufficiently small $\lambda_i$, the annuli $\bar{A}_{\lambda_i,\sqrt{\lambda_i}}(q_i)$ satisfy the assumptions of Lemma \ref{small_annuli} and therefore have only one connected component meeting the outer boundary. This immediately implies that $(V,h)$ has only one end.
\end{proof}

In the remainder of this section, we prove a so-called neck theorem. Generally, in bubbling results one usually encounters three different types of regions: regions where energy concentrates (and bubbles form), regions where there is no such concentration (and the convergence is smooth), and finally the intermediate or neck regions. The following result about these intermediate regions is a Ricci shrinker version of Theorem 1.8 in \cite{AC91_diffeofin} for manifolds with pointwise Ricci bounds.
\begin{theorem}[Neck Theorem for Ricci Shrinkers]\label{shrinker_neck_thm}
Let $n\geq 4$, $r\geq 2$, $\underline{\mu} > -\infty$, $k\in\mathbb{N}$ and $\eps>0$ be given constants. Then there exist $\eps_{\mathrm{neck}}>0$, $\sigma_1>0$ and $\gamma<\infty$ such that the following holds.

Let $(M, g, f)$ be a complete $n$-dimensional gradient Ricci shrinker such that $\mu(g) \geq \underline{\mu}$ and the local energy bounds \eqref{eq.energybounds} are satisfied if $n>4$. Take $q \in B_g(p,r+1)$ where $p := \argmin_M f$. Let $A_{s_1, s_2}(q) \subset M$ be the unique connected component of the geodesic annulus $\bar{A}_{s_1, s_2}(q)$ which satisfies the condition $A_{s_1, s_2}(q) \cap \partial B_g(q,s_2) \neq \emptyset$ (according to Lemma \ref{small_annuli}) and with
\begin{equation}\label{eq.neckcondition1}
s_2 \leq \sigma_1, \qquad s_1 \leq \eps_{\mathrm{neck}} s_2.
\end{equation}
Finally, assume that
\begin{equation}\label{eq.neckcondition2}
\int_{A_{s_1, s_2}(q)} \abs{\Rm_g}^{n/2}_g dV_g \leq \eps_{\mathrm{neck}}.
\end{equation}
Then there is some $\Gamma \subset O(n)$ acting freely on $S^{n-1}$ with $\abs{\Gamma} \leq \gamma$ and an $\eps$-quasi-isometry\footnote{We call a map $\psi:X\to Y$ between metric spaces an $\eps$-quasi-isometry if $\abs{d_X(x_1, x_2) - d_Y(\psi(x_1), \psi(x_2))} \leq \eps$, for all $x_1,x_2 \in X$.} $\psi$ with
\begin{equation}\label{eq.closetocone}
A_{(\eps_{\mathrm{neck}}^{-1/2} + \eps)s_1, (\eps_{\mathrm{neck}}^{1/2}-\eps)s_2}(q) \subset \psi \Big(\mathcal{C}_{\eps_{\mathrm{neck}}^{-1/2}s_1, \eps_{\mathrm{neck}}^{1/2}s_2}(S^{n-1}/\Gamma)\Big) \subset A_{(\eps_{\mathrm{neck}}^{-1/2} - \eps)s_1, (\eps_{\mathrm{neck}}^{1/2}+\eps)s_2}(q)
\end{equation}
such that for all $\mathcal{C}_{\frac{1}{2}s,s} (S^{n-1}/\Gamma) \subset \mathcal{C}_{\eps_{\mathrm{neck}}^{-1/2}s_1, \eps_{\mathrm{neck}}^{1/2}s_2}(S^{n-1}/\Gamma)$ in local coordinates one has
\begin{equation}\label{eq.epsclosetocone}
\abs{(\psi^\ast (s^{-2}g))_{ij} - \delta_{ij}}_{C^k} \leq \eps.
\end{equation}
\end{theorem}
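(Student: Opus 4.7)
The proof proceeds by contradiction and compactness, building on the blow-up compactness Theorem \ref{blow-up.thm}, the one-end Corollary \ref{cor.oneend}, and Bando's gap result (Proposition \ref{prop.Bandogap}).

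Suppose the conclusion fails for some fixed $(n,r,\underline{\mu},k,\eps)$. Then there exist sequences $\eps_i\to 0$, $\sigma_{1,i}\to 0$, $\gamma_i\to\infty$, Ricci shrinkers $(M_i,g_i,f_i,p_i)$ satisfying the standing hypotheses, points $q_i\in B_{g_i}(p_i,r+1)$, and scales $s_1^i\leq\eps_i s_2^i\leq\eps_i\sigma_{1,i}$ with
\begin{equation*}
\int_{A_{s_1^i,s_2^i}(q_i)} \abs{\Rm_{g_i}}^{n/2}_{g_i}\, dV_{g_i} \leq \eps_i,
\end{equation*}
but no admissible $\eps$-quasi-isometry $\psi$ satisfying \eqref{eq.closetocone}--\eqref{eq.epsclosetocone} exists. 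The natural blow-up scale is the geometric mean $\rho_i:=\sqrt{s_1^i s_2^i}\to 0$: in the rescaled metric $\wt{g}_i:=\rho_i^{-2}g_i$, the annulus $A_{s_1^i,s_2^i}(q_i)$ becomes $A_{\sqrt{s_1^i/s_2^i},\,\sqrt{s_2^i/s_1^i}}(q_i)$, whose inner radius tends to $0$ and outer radius tends to $\infty$.

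Apply Theorem \ref{blow-up.thm} to $(M_i,\wt{g}_i,q_i)$ and pass to a subsequence converging in the pointed orbifold Cheeger--Gromov sense to a complete non-compact Ricci-flat ALE orbifold $(V,h,q_\infty)$, which has one end by Corollary \ref{cor.oneend}. Scale-invariance of $\int\abs{\Rm}^{n/2}$ together with lower semicontinuity under orbifold Cheeger--Gromov convergence gives
\begin{equation*}
\int_V \abs{\Rm_h}^{n/2}_h\, dV_h \leq \liminf_{i\to\infty}\int_{A_{s_1^i,s_2^i}(q_i)} \abs{\Rm_{g_i}}^{n/2}_{g_i}\, dV_{g_i} = 0,
\end{equation*}
so $(V,h)$ is flat (alternatively, Proposition \ref{prop.Bandogap} applies for large $i$). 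The standard classification of complete one-ended flat orbifolds with Euclidean volume growth (guaranteed by \eqref{eq.rescaledvolumebounds} passed to the limit) identifies $(V,h)$ isometrically with $\mathbb{R}^n/\Gamma$ for some finite $\Gamma\subset O(n)$ acting freely on $S^{n-1}$, with a possible orbifold singularity only at $q_\infty$. Comparing the closed form $\Vol_h(B_h(q_\infty,1))=\omega_n/\abs{\Gamma}$ to the limit of the non-collapsing bound $v_1$ yields $\abs{\Gamma}\leq\omega_n/v_1=:\gamma$, depending only on $n,r,\underline{\mu}$.

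The embeddings $\varphi_i:U_i\to M_i$ provided by Theorem \ref{blow-up.thm} (with $U_i$ an exhaustion of $V\setminus\{q_\infty\}$) satisfy $\varphi_i^*\wt{g}_i\to h$ in $C^\infty_{\mathrm{loc}}(V\setminus\{q_\infty\})$. Identifying $V\setminus\{q_\infty\}$ with $\mathcal{C}_{0,\infty}(S^{n-1}/\Gamma)$, set $\psi$ to be the restriction of $\varphi_i$ to the rescaled neck region. The containment \eqref{eq.closetocone} follows from the Gromov--Hausdorff part of the convergence. For \eqref{eq.epsclosetocone} on each sub-annulus $\mathcal{C}_{s/2,s}(S^{n-1}/\Gamma)$, rescale further by $s^{-2}$ so that the sub-annulus becomes unit size: the claim becomes $C^k$-closeness of $(s/\rho_i)^{-2}\varphi_i^*\wt{g}_i$ to $(s/\rho_i)^{-2}h$ on the unit annulus in $V$. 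Since $h$ is exactly the flat cone metric, both sides reduce in natural local flat coordinates to $\delta_{ij}$, and the $C^\infty_{\mathrm{loc}}$ convergence on the compact unit annulus gives the required closeness for large $i$, contradicting the assumed failure.

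\textbf{Main obstacle.} The subtlest part is obtaining \eqref{eq.epsclosetocone} uniformly as $s$ ranges across the full neck: in $\wt{g}_i$-coordinates this scale ranges over $[\eps_i^{-1/2}\sqrt{s_1^i/s_2^i},\,\eps_i^{1/2}\sqrt{s_2^i/s_1^i}]$, which may span arbitrarily many orders of magnitude. Three ingredients resolve this: the symmetric choice $\rho_i=\sqrt{s_1^i s_2^i}$ captures the entire neck in a single blow-up; the \emph{globally} flat (not merely asymptotically flat) structure of $V=\mathbb{R}^n/\Gamma$ makes the cone approximation scale-invariantly exact in the limit, so the Cheeger--Gromov convergence transports directly to each rescaled sub-annulus; and the slack factors $\eps_{\mathrm{neck}}^{\pm 1/2}$ in \eqref{eq.closetocone} keep the rescaled sub-annuli strictly inside the convergence region. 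Independence of $\Gamma$ from the choice of rescaling within the neck follows from self-similarity: any other blow-up scale $\lambda_i\in[s_1^i,s_2^i]$ either produces $V$ up to bounded rescaling (if $\lambda_i/\rho_i$ is bounded) or zooms into the cone structure of $V=\mathbb{R}^n/\Gamma$ itself (if $\lambda_i/\rho_i\to 0$ or $\infty$), and in both cases yields the same group $\Gamma$.
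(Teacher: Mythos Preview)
Your overall strategy (contradiction plus blow-up compactness, yielding a flat cone $\mathbb{R}^n/\Gamma$ with $|\Gamma|$ bounded via volume comparison) matches the paper's, but the resolution of what you yourself call the ``Main obstacle'' has a genuine gap. The Cheeger--Gromov convergence of $(M_i,\wt g_i,q_i)$ to $V=\mathbb{R}^n/\Gamma$ furnished by Theorem~\ref{blow-up.thm} is $C^\infty_{\mathrm{loc}}$ on $V\setminus\{0\}$: for each \emph{fixed} compact $K\subset V\setminus\{0\}$ you get $\varphi_i^*\wt g_i\to h$ in $C^k(K)$, with a rate that depends on $K$. But the sub-annuli you must control in \eqref{eq.epsclosetocone} live at $\wt g_i$-scales ranging over the entire interval $[\eps_i^{-1/2}\sqrt{s_1^i/s_2^i},\,\eps_i^{1/2}\sqrt{s_2^i/s_1^i}]$, whose endpoints tend to $0$ and $\infty$; these sub-annuli do \emph{not} lie in any fixed compact subset of $V\setminus\{0\}$, so the single convergence statement gives you nothing there. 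Your Point~2 (``the globally flat structure of $V$ makes the cone approximation scale-invariantly exact in the limit, so the Cheeger--Gromov convergence transports directly to each rescaled sub-annulus'') conflates exactness of the \emph{limit} at all scales with uniformity of the \emph{convergence} at all scales, and the latter is precisely what is at stake.

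The paper handles this by a different organisation: it first proves (Claim~1) that for \emph{each} scale $s$ in the neck the sub-annulus $A_{s/2,s}(q)$ is $\eps$-close in $C^k$ to some $\mathcal{C}_{s/2,s}(S^{n-1}/\Gamma_s)$, by assuming failure at a sequence of scales $s_i$ and blowing up at $s_i$ itself, so that the bad sub-annulus is always the fixed unit annulus in the rescaled picture. This is the step that actually delivers scale-uniform $C^k$ control. Only then (Claim~2) does one argue that $\Gamma_s$ is locally constant in $s$ (via overlap quasi-isometries), hence constant, and patch the per-scale maps $\psi_s$ into a single $\psi$ along dyadic scales following Anderson--Cheeger. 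Your argument would become correct if you replaced the single geometric-mean blow-up by the per-scale blow-up of Claim~1, but then you also need the separate patching step of Claim~2, which your proposal omits.
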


\begin{proof}
The proof is in two steps. We first prove the following claim:
\begin{claim}
There exist $\eps_{\mathrm{neck}}, \sigma_1, \gamma$ such that for each $s$ as in the statement of the theorem, \eqref{eq.closetocone}--\eqref{eq.epsclosetocone} hold for some $\psi_s: \mathcal{C}_{\frac{1}{2}s,s}(S^{n-1} / \Gamma_s) \to A_{\eps_{\mathrm{neck}}^{-1/2} s_1, \eps_{\mathrm{neck}}^{1/2} s_2}(q)$ where $\psi_s, \Gamma_s$ may a-priori depend on $s$.
\end{claim}

\begin{proof}
Assume towards a contradiction that the claim is not true. Then for given $n\geq 4$, $r\geq 2$, $\underline{\mu} > -\infty$, $k\in\mathbb{N}$ and $\eps>0$ there exist sequences $\eps_i\to 0$, $\sigma_i\to 0$ and a family of complete $n$-dimensional Ricci shrinkers $(M_i,g_i,f_i)$ containing annuli $A_{s^i_1, s^i_2}(q_i)$ satisfying the assumptions of the theorem but containing some sub-annuli $A_{\frac{1}{2}s_i,s_i}(q_i)$ (with $s_i\in [2\eps_i^{-1/2}s^i_1, \eps_i^{1/2}s^i_2]$) that, after rescaling the metric by $\wt{g}_i:=s_i^{-2}g_i$, are \emph{not} $\eps$-close in the $C^k$ topology to an annular portion of \emph{any} cone $\mathcal{C}(S^{n-1}/\Gamma)$.

By Theorem \ref{blow-up.thm}, we can take a pointed orbifold Cheeger--Gromov limit of $(M_i,\wt{g}_i,f_i,q_i)$ converging to an orbifold $(V,h,q_\infty)$. By the condition $s_i\in [2\eps_i^{-1/2}s^i_1, \eps_i^{1/2}s^i_2]$, we obtain for every $\ell \in \mathbb{N}$ that for sufficiently large $i$ we have $A_{\frac{1}{2\ell}s_i, \ell s_i}(q_i) \subset A_{s^i_1, s^i_2}(q_i)$ and therefore
\begin{equation*}
\int_{A_{\frac{1}{2\ell}s_i, \ell s_i}(q_i)} \abs{\Rm_{g_i}}^{n/2}_{g_i} dV_{g_i} \leq \eps_i \to 0.
\end{equation*}
In particular, after rescaling $\wt{g}_i:=s_i^{-2}g_i$, there are no points of bad convergence on the annulus $A^{\wt{g}_i}_{(2\ell)^{-1}\!,\, \ell}(q_i)$. Repeating this for larger and larger $\ell$, we see that the convergence is smooth away from $q_\infty$, i.e. $(V,h)$ has at most one orbifold point. Moreover, using the argument from \eqref{eq.rescaledepsreg} applied to larger and larger balls, we obtain that the limit is \emph{flat}.

Following the proof of Theorem 1.8 in \cite{AC91_diffeofin}, respectively Section 5 of \cite{BKN89_alecoords} (which we can certainly do because the Ricci-flatness of the limit gives pointwise Ricci bounds on the annuli $A^{\wt{g}_i}_{(2\ell)^{-1}\!,\, \ell}(q_i)$), we see that therefore the limit $(V,h)$ must be an Euclidean cone, i.e. there exists some $\Gamma \subset O(n)$ acting freely on $S^{n-1}$ such that $(V,h) = \mathcal{C}(S^{n-1}/\Gamma)$. As the convergence is smooth away from the origin, we obtain the desired contradiction and hence the claim holds true.
\end{proof}

Having obtained $\psi_s$ and groups $\Gamma_s$, all that remains is to rule out the possible $s$ dependence. This is the second step of the proof.
\begin{claim}
The subgroup $\Gamma_s$ is independent of $s$ and, after slight modifications, some (or all) of the maps $\psi_s$ can be combined to yield the map $\psi$ in the statement of the theorem.
\end{claim}

\begin{proof}
We know there are constants $\eps_{\mathrm{neck}}, \sigma_1, \gamma$ such that for any $s$ with $2\eps_{\mathrm{neck}}^{-1/2} s_1 \leq s \leq \eps_{\mathrm{neck}}^{1/2}s_2$, the annulus $A_{\frac{1}{2}s,s}(q)$ is $\eps$-quasi-isometric and $\eps s^{-k}$-close in the $C^k$ sense to an annular region in a cone, $\mathcal{C}_{\frac{1}{2}s,s}(S^{n-1}/\Gamma_s)$. Now take $\eps$ sufficiently small and, for some fixed $s$, take $s'$ very close to $s$. On its maximal domain of definition $\psi^{-1}_{s'} \circ \psi_s$ is a $2\eps$-quasi-isometry. Therefore $\Gamma_s$ is locally constant and thus independent of $s$.

Once we know that the cone $\mathcal{C}(S^{n-1}/\Gamma)$ is fixed, we can let $t_i := (\frac{2}{3})^{i-1}\eps_{\mathrm{neck}}^{1/2}s_2$ and set $\psi_i = \psi_{t_i}$. These maps $\psi_i$ almost agree after radial scaling and hence, after a further slight modification, can then be piece-wisely connected to yield the map $\psi$, precisely following the argument from \cite{AC91_diffeofin}, page 241.
\end{proof}

Combining the two claims, the theorem is proved.	
\end{proof}

\section{Improved Kato Inequality and Energy Estimate in Necks}\label{sec_Kato}
The main estimate of this section, Theorem \ref{no_neck_energy}, which is to some extent inspired by work of Bando and Bando--Kasue--Nakajima on Einstein manifolds \cite{Ba90_bubbling, BKN89_alecoords}, will allow us to show that energy does not concentrate in a neck region during the bubble tree construction.

The following proposition, on which the energy estimate from Theorem \ref{no_neck_energy} is based, can be seen as a purely analytical result, which requires only the uniform local Sobolev constant bounds from Lemma \ref{Sobolev_bounds}.

\begin{proposition}[Annulus Estimate]\label{grs_bando_lemma2}
Let $n\geq 4$, $r\geq 2$, $\underline{\mu} > -\infty$ and $\alpha>1$ be given constants. Then there exist $\eps_{\mathrm{ann}}>0$, $\sigma_2>0$ and $C_2<\infty$ such that the following holds.

Let $(M,g,f)$ be an $n$-dimensional gradient Ricci shrinker with $\mu(g) \geq \underline{\mu}$. Take $q \in B_g(p,r+1)$ where $p := \argmin_M f$ and let $A_{s_1, s_2}(q) \subset M$ be the unique connected component of the geodesic annulus $\bar{A}_{s_1, s_2}(q)$ which satisfies the condition $A_{s_1, s_2}(q) \cap \partial B_g(q,s_2) \neq \emptyset$ (according to Lemma \ref{small_annuli}) and with
\begin{equation*}
s_2 \leq \sigma_2, \qquad s_1 \leq \tfrac{1}{4} s_2.
\end{equation*}
Finally, let $u,v$ be non-negative functions such that $\Lap_f u = \Lap u - \scal{\nabla f,\nabla u} \geq - uv$ and suppose that $v \in L^{\frac{n}{2}}$ with
\begin{equation}\label{eq.smallv}
\int_{A_{s_1,s_2}(q)} v^{\frac{n}{2}} dV_g \leq \eps_{\mathrm{ann}}
\end{equation}
and $u \in L^\alpha$. Then for $\gamma=\frac{n}{n-2}$, we have 
\begin{equation*}
\int_{A_{s_1,s_2}(q)} u^{\alpha\gamma} dV_g \leq C_2 \int_{A_{s_1, 2s_1}(q)\, \cup\,  A_{\frac{1}{2}s_2, s_2}(q)}u^{\alpha\gamma} dV_g.
\end{equation*}
\end{proposition}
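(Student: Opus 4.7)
The plan is a weighted Moser iteration, built around a cutoff that partitions $A_{s_1,s_2}(q)$ into a core (where $\eta \equiv 1$) and two boundary shells (where $|\nabla\eta|$ is supported). The hypothesis \eqref{eq.smallv} enters precisely at the step where we would otherwise have no control over the zeroth-order term $\int u^\alpha v$, and the local Sobolev inequality from Lemma \ref{Sobolev_bounds} is the tool that allows us to absorb it into the gradient energy. The whole argument will be run in the weighted measure $e^{-f}dV$ and then translated to the unweighted $dV$ using the fact that $f$ is bounded between constants depending only on $n, r, \underline{\mu}$ on $B_g(p, r+2)$ (which contains the annulus since $q \in B_g(p,r+1)$ and $s_2 \leq 1$); this uses $f \geq \mu(g) \geq \underline{\mu}$ together with the quadratic growth estimate in Section \ref{sec_blowup}.

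To begin, I replace $u$ by $u + \epsilon$ to assume positivity and pass to the limit at the end. I choose a smooth radial cutoff $\eta$ supported in $A_{s_1,s_2}(q)$ with $\eta \equiv 1$ on $A_{2s_1, s_2/2}(q)$, $|\nabla\eta| \leq C/s_1$ on $A_{s_1, 2s_1}(q)$, and $|\nabla\eta| \leq C/s_2$ on $A_{s_2/2, s_2}(q)$. Testing $\Delta_f u \geq -uv$ against $\eta^2 u^{\alpha-1}$ with respect to $e^{-f}dV$ and using the self-adjointness $\int (\Delta_f u)\varphi \, e^{-f}dV = -\int \langle\nabla u, \nabla\varphi\rangle e^{-f}dV$, followed by the standard Cauchy--Schwarz absorption on the cross term (which is where $\alpha > 1$ is used), yields, with $w := \eta u^{\alpha/2}$,
\[
c(\alpha) \int |\nabla w|^2 e^{-f} dV \,\leq\, C(\alpha) \int u^\alpha |\nabla\eta|^2 e^{-f} dV \,+\, \int w^2 v \, e^{-f} dV.
\]

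Now I exploit the two tools above. The equivalence of weighted and unweighted measures turns the display into the same inequality (up to constants depending on $n, r, \underline{\mu}$) with plain $dV$ in place of $e^{-f}dV$. Next, since $w$ is compactly supported in $B_g(q,s_2) \subset B_g(p, r+2)$, taking $\sigma_2 \leq \delta_0(r+2)$ lets me apply Lemma \ref{Sobolev_bounds} to get $\|w\|_{L^{2^*}}^2 \leq C_S^2 \|\nabla w\|_{L^2}^2$. H\"older's inequality with exponents $(n/2, n/(n-2))$ on the $w^2 v$ integral, combined with \eqref{eq.smallv}, yields
\[
\int w^2 v \, dV \,\leq\, \|v\|_{L^{n/2}(A_{s_1,s_2})} \|w\|_{L^{2^*}}^2 \,\leq\, \eps_{\mathrm{ann}}^{2/n} C_S^2 \|\nabla w\|_{L^2}^2,
\]
so choosing $\eps_{\mathrm{ann}}$ small enough (depending on $\alpha, n, r, \underline{\mu}$) absorbs this term into the left-hand side, leaving $\|\nabla w\|_{L^2}^2 \leq C \int u^\alpha |\nabla\eta|^2 dV$.

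For the remaining term, H\"older with exponents $(\gamma, n/2)$ gives
\[
\int u^\alpha |\nabla\eta|^2 dV \,\leq\, \Big(\int u^{\alpha\gamma} dV\Big)^{1/\gamma} \Big(\int |\nabla\eta|^n dV\Big)^{2/n},
\]
and the second factor is uniformly bounded because on each shell $|\nabla\eta|^n$ is of order $s_j^{-n}$ while the volume of the shell is at most $V_0 s_j^n$ by the Euclidean volume growth \eqref{eq.volumegrowth}; moreover the first factor is supported on the two shells $A_{s_1, 2s_1}(q) \cup A_{s_2/2, s_2}(q)$. One more application of Sobolev to $w$, restricted to the core where $\eta = 1$, gives $\bigl(\int_{A_{2s_1, s_2/2}} u^{\alpha\gamma}\bigr)^{1/\gamma} \leq C \bigl(\int_{\text{shells}} u^{\alpha\gamma}\bigr)^{1/\gamma}$, and raising to the $\gamma$-th power and adding the shell integral to both sides produces the claimed bound. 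The only real subtlety is bookkeeping to confirm that every constant depends only on $\alpha, n, r, \underline{\mu}$, which holds because all inputs (volume growth, Sobolev constant, $|f|$ bounds) are uniform over the relevant class of shrinkers.
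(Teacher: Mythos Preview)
Your argument is correct and follows the same Moser-iteration-with-cutoff strategy as the paper, but with one technical simplification worth noting. You integrate by parts in the weighted measure $e^{-f}dV_g$, where $\Delta_f$ is self-adjoint, and only pass to the unweighted volume at the end; the paper instead works in $dV_g$ from the start and must therefore handle the drift term $\langle\nabla f,\nabla u\rangle$ explicitly. After a further integration by parts this produces an additional zeroth-order contribution $\tfrac{C(r)}{\alpha}\int_M\varphi^2 u^\alpha\,dV_g$ (with $C(r)$ a bound on $|\nabla f|^2+\Delta f$ on $B_g(p,2r)$), which the paper then absorbs by imposing a \emph{second} smallness condition on $\sigma_2$, namely $\mathrm{Vol}^{2/n}(A_{s_1,s_2}(q))\leq\tfrac{1}{4C_S(C+1)}$. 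Your weighted route avoids this term and this extra constraint entirely, so your $\sigma_2$ is determined solely by the Sobolev radius $\delta_0$.

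One minor imprecision: asserting that $f$ itself is bounded between constants depending only on $n,r,\underline\mu$ tacitly uses an upper bound on $\mu(g)$ that the paper does not supply; what the growth estimate in Section~\ref{sec_blowup} actually controls is $f-\mu(g)$. This is harmless for your purposes, since only the \emph{oscillation} of $f$ on the annulus enters the comparison of $e^{-f}dV_g$ with $dV_g$ --- the overall factor $e^{-\mu(g)}$ cancels from both sides of every inequality.
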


\begin{proof}
The first part of the proof (up to \eqref{eq.Mosteritstep1} below) is related to the first step in a standard Moser iteration or epsilon regularity argument with some extra work to take care of the $\nabla f$ terms coming from the drift Laplacian.

We will work with a cutoff function $0\leq \varphi \leq 1$ with compact support in $A_{s_1,s_2}(q) \subset B_g(p,2r)$ which we will determine more precisely further below. We have
\begin{align*}
-\int_M \varphi^2 u^{\alpha-1}\Lap u \, dV_g &= \int_M \varphi^2\nabla u^{\alpha-1} \nabla u \, dV_g + \int_M \nabla\varphi^2 \cdot u^{\alpha-1} \nabla u \, dV_g\\
&= \tfrac{4(\alpha-1)}{\alpha^2} \int_M \varphi^2\abs{\nabla u^{\alpha/2}}^2 dV_g + \tfrac{4}{\alpha} \int_M \varphi\nabla \varphi \cdot u^{\alpha/2}\nabla u^{\alpha/2} dV_g
\end{align*}
Rearranging this and applying the differential inequality $\Lap_f u = \Lap u - \scal{\nabla f,\nabla u} \geq -uv$, we get
\begin{equation*}
\tfrac{4(\alpha-1)}{\alpha^2} \int_M \varphi^2 \abs{\nabla u^{\alpha/2}}^2 dV_g \leq \int_M \varphi^2 u^\alpha v \, dV_g - \int_M \varphi^2 u^{\alpha-1} \scal{\nabla f, \nabla u}  dV_g - \tfrac{4}{\alpha}\int_M \varphi\nabla \varphi \cdot u^{\alpha/2}\nabla u^{\alpha/2} dV_g.
\end{equation*}
Using Young's inequality, the last term on the right hand side can be estimated by
\begin{equation*}
- \tfrac{4}{\alpha}\int_M \varphi\nabla \varphi \cdot u^{\alpha/2}\nabla u^{\alpha/2} dV_g \leq \tfrac{2}{\alpha}\bigg[\tfrac{\alpha-1}{\alpha} \int_M \varphi^2 \abs{\nabla u^{\alpha/2}}^2 dV_g + \tfrac{\alpha}{\alpha-1} \int_M \abs{\nabla\varphi}^2 u^\alpha dV_g\bigg].
\end{equation*}
Hence, after absorption, we find
\begin{equation}\label{eq.propproof1}
\tfrac{2(\alpha-1)}{\alpha^2} \int_M \varphi^2 \abs{\nabla u^{\alpha/2}}^2 dV_g \leq \int_M \varphi^2 u^\alpha v \, dV_g + \tfrac{2}{\alpha-1}\int_M \abs{\nabla\varphi}^2 u^\alpha dV_g - \int_M \varphi^2 u^{\alpha-1} \scal{\nabla f, \nabla u}  dV_g.
\end{equation}
Let us now estimate the term involving $\nabla f$. Integrating by parts yields
\begin{align*}
- \int_M \varphi^2 u^{\alpha-1} \scal{\nabla f, \nabla u}  dV_g &= \int_M u^\alpha \scal{\nabla \varphi^2 , \nabla f } dV_g + \int_M \varphi^2 u \scal{\nabla u^{\alpha-1}, \nabla f}  dV_g + \int_M \varphi^2 u^\alpha \Lap f  \, dV_g\\
&= \int_M u^\alpha \scal{\nabla \varphi^2 , \nabla f } dV_g + (\alpha-1)\int_M \varphi^2 u^{\alpha-1} \scal{\nabla f, \nabla u}  dV_g + \int_M \varphi^2 u^\alpha \Lap f  \, dV_g
\end{align*}
and thus after subtracting the second term on the right hand side
\begin{align*}
-\alpha \int_M \varphi^2 u^{\alpha-1} \scal{\nabla f, \nabla u}  dV_g &= \int_M u^\alpha \scal{\nabla \varphi^2 , \nabla f } dV_g + \int_M \varphi^2 u^\alpha \Lap f  \, dV_g.
\end{align*}
We can therefore estimate, using Young's inequality,
\begin{align*}
- \int_M \varphi^2 u^{\alpha-1} \scal{\nabla f, \nabla u} dV_g &= \tfrac{1}{\alpha} \int_M u^\alpha \big(\scal{\nabla\varphi^2,\nabla f}+\varphi^2\Lap f\big) dV_g\\
&= \tfrac{1}{\alpha} \int_M u^\alpha \big(2\varphi\scal{\nabla\varphi,\nabla f}+\varphi^2\Lap f\big) dV_g\\
&\leq \tfrac{1}{\alpha} \int_M \abs{\nabla\varphi}^2 u^\alpha dV_g + \tfrac{1}{\alpha} \int_M \varphi^2 u^\alpha \big(\abs{\nabla f}^2+\Lap f\big) dV_g\\
&\leq \tfrac{1}{\alpha} \int_M \abs{\nabla\varphi}^2 u^\alpha dV_g + \tfrac{C(r)}{\alpha} \int_M \varphi^2 u^\alpha dV_g,
\end{align*}
where $C(r)$ is a bound on $\abs{\nabla f}^2+\Lap f$ inside $B_g(p,2r)$. (Such a bound clearly exists: for $\abs{\nabla f}^2$ we have derived it in \eqref{eq.nablafest} and, using the trace of the shrinker equation \eqref{eq.shrinker} and the fact that $R_g \geq 0$, we also have $\Lap f \leq \tfrac{n}{2}$ everywhere.) Plugging this last estimate into \eqref{eq.propproof1}, we obtain
\begin{align*}
\int_M \varphi^2 \abs{\nabla u^{\alpha/2}}^2 dV_g &\leq \tfrac{\alpha^2}{2(\alpha-1)}\bigg[\int_M \varphi^2 u^\alpha v\, dV_g+ \big(\tfrac{2}{\alpha-1}+\tfrac{1}{\alpha}\big)\int_M \abs{\nabla \varphi}^2 u^\alpha dV_g+ \tfrac{C(r)}{\alpha}\int_M \varphi^2 u^\alpha dV_g\bigg]\\
&\leq C \int_M \varphi^2 u^\alpha v + \abs{\nabla \varphi}^2 u^\alpha + \varphi^2 u^\alpha dV_g,
\end{align*}
where 
\begin{equation}\label{eq.valueofC}
C=C(n,r,\underline{\mu},\alpha)= \tfrac{\alpha^2}{2(\alpha-1)}\max\Big\{1,\tfrac{2}{\alpha-1}+\tfrac{1}{\alpha}, \tfrac{C(r)}{\alpha}\Big\}. 
\end{equation}
Next, combining this estimate with the uniform Sobolev inequality from Lemma \ref{Sobolev_bounds} (which we can apply if $\sigma_2 \leq \delta_0(2r)$) and the smallness assumption \eqref{eq.smallv}, and noting that $2^*=2\gamma$, we conclude
\begin{align*}
\left(\int_M \big(\varphi u^{\alpha/2}\big)^{2\gamma}dV_g\right)^{\frac{1}{\gamma}} &\leq C_S \int_M \abs{\nabla\left(\varphi u^{\alpha/2}\right)}^2 dV_g\\
&\leq C_S \int_M \abs{\nabla \varphi}^2 u^\alpha + \varphi^2\abs{\nabla u^{\alpha/2}}^2 dV_g\\
&\leq C_S (C+1) \int_M \varphi^2 u^\alpha v + \abs{\nabla \varphi}^2 u^\alpha + \varphi^2 u^\alpha dV_g\\
&\leq C_S (C+1) \bigg[\left(\int_M v^{\frac{n}{2}}dV_g\right)^{\frac{2}{n}}\left(\int_M \varphi^{2\gamma} u^{\alpha\gamma}dV_g\right)^{\frac{1}{\gamma}} + \int_M \varphi^2 u^\alpha + \abs{\nabla \varphi}^2 u^\alpha dV_g\bigg]\\
&\leq C_S (C+1) \bigg[\eps_{\mathrm{ann}}^{2/n}\left(\int_M \big(\varphi u^{\alpha/2}\big)^{2\gamma}dV_g\right)^{\frac{1}{\gamma}} + \int_M \varphi^2 u^\alpha + \abs{\nabla \varphi}^2 u^\alpha dV_g\bigg].
\end{align*}
We can absorb the first term on the last line by taking $\eps_{\mathrm{ann}}$ small enough. For example, letting $\eps_{\mathrm{ann}}^{2/n} \leq \frac{1}{2C_S(C+1)}$, we obtain
\begin{equation}\label{eq.Mosteritstep1}
\left(\int_M \big(\varphi u^{\alpha/2}\big)^{2\gamma}dV_g\right)^{\frac{1}{\gamma}} \leq 2C_S(C+1) \int_M \varphi^2 u^\alpha + \abs{\nabla \varphi}^2 u^\alpha dV_g.
\end{equation}

Now, choose $0\leq \varphi \leq 1$ so that $\varphi = 1$ on $A_{2s_1,\frac{1}{2}s_2}(q)$, $\varphi = 0$ on $M \setminus A_{s_1,s_2}\left(q\right)$, and 
\begin{equation}\label{eq.nablaphiassumption}
\abs{\nabla \varphi} \leq 
\begin{cases}
\frac{C'}{s_1} & \text{on } A_{s_1,2s_1}(q) \\
\frac{C'}{s_2} & \text{on } A_{\frac{1}{2}s_2, s_2}(q)
\end{cases}
\end{equation}
for some universal constant $C'<\infty$. Using \eqref{eq.Mosteritstep1}, we get
\begin{align*}
\bigg(\int_{A_{s_1,s_2}(q)} u^{\alpha\gamma} dV_g\bigg)^{\frac{1}{\gamma}} &\leq \bigg(\int_{A_{s_1,2s_1}(q)\, \cup \, A_{\frac{1}{2}s_2,s_2}(q)} u^{\alpha\gamma} dV_g\bigg)^{\frac{1}{\gamma}} + \bigg(\int_{A_{2s_1,\frac{1}{2}s_2}(q)} \big(u^{\alpha/2}\big)^{2\gamma}dV_g\bigg)^{\frac{1}{\gamma}}\\
&\leq \bigg(\int_{A_{s_1,2s_1}(q)\, \cup \, A_{\frac{1}{2}s_2,s_2}(q)} u^{\alpha\gamma} dV_g\bigg)^{\frac{1}{\gamma}} + \bigg(\int_M \big(\varphi u^{\alpha/2}\big)^{2\gamma}dV_g\bigg)^{\frac{1}{\gamma}}\\
&\leq \bigg(\int_{A_{s_1,2s_1}(q)\, \cup \, A_{\frac{1}{2}s_2,s_2}(q)} u^{\alpha\gamma} dV_g\bigg)^{\frac{1}{\gamma}} + 2C_S(C+1) \int_M \varphi^2 u^\alpha + \abs{\nabla \varphi}^2 u^\alpha dV_g.
\end{align*}
H\"older's inequality yields
\begin{equation*}
\int_M \varphi^2 u^\alpha dV_g \leq \Vol^{\frac{2}{n}}(A_{s_1,s_2}(q)) \cdot \bigg(\int_{A_{s_1,s_2}(q)} u^{\alpha\gamma} dV_g\bigg)^{\frac{1}{\gamma}} 
\end{equation*}
Hence, if $\sigma_2$ is chosen sufficiently small such that $\Vol^{\frac{2}{n}}(A_{s_1,s_2}(q)) \leq \frac{1}{4C_S(C+1)}$ -- which can be done due to the uniform volume growth estimate \eqref{eq.volumegrowth} -- then this term can be absorbed, leading to
\begin{equation*}
\bigg(\int_{A_{s_1,s_2}(q)} u^{\alpha\gamma} dV_g\bigg)^{\frac{1}{\gamma}} \leq 2\bigg(\int_{A_{s_1,2s_1}(q)\, \cup \, A_{\frac{1}{2}s_2,s_2}(q)} u^{\alpha\gamma} dV_g\bigg)^{\frac{1}{\gamma}} + 4C_S(C+1) \int_M \abs{\nabla \varphi}^2 u^\alpha dV_g.
\end{equation*}
Finally, applying H\"older's inequality also to the last term, we find for some $C''<\infty$
\begin{equation*}
\int_M \abs{\nabla \varphi}^2 u^\alpha dV_g \leq \left(\int_{\mathrm{supp}(\nabla\varphi)} \abs{\nabla \varphi}^n dV_g\right)^{\frac{2}{n}}\left(\int_{\mathrm{supp}(\nabla\varphi)} u^{\alpha\gamma} dV_g\right)^{\frac{1}{\gamma}} \leq C'' \bigg(\int_{A_{s_1,2s_1}(q)\, \cup \, A_{\frac{1}{2}s_2,s_2}(q)} u^{\alpha\gamma} dV_g\bigg)^{\frac{1}{\gamma}}.
\end{equation*}
Here, we have used the volume growth estimate \eqref{eq.volumegrowth} and the assumption \eqref{eq.nablaphiassumption} for the last estimate. We therefore conclude
\begin{equation*}
\bigg(\int_{A_{s_1,s_2}(q)} u^{\alpha\gamma} dV_g\bigg)^{\frac{1}{\gamma}} \leq (2+4C_S(C+1)C'')\bigg(\int_{A_{s_1,2s_1}(q)\, \cup \, A_{\frac{1}{2}s_2,s_2}(q)} u^{\alpha\gamma} dV_g\bigg)^{\frac{1}{\gamma}}
\end{equation*}
and hence the proposition is proved with $C_2 = (2+4C_S(C+1)C'')^\gamma$.
\end{proof}

Endowed with this proposition, we would now like to show that for small annuli $A_{s_1, s_2}(q)$ (under assumptions similar to the ones in the neck theorem), the energy of the entire annulus can be estimated by the energy of the two dyadic annuli $A_{s_1, 2s_1}(q)$ and $A_{\frac{1}{2}s_2, s_2}(q)$. It is tempting to use the equation
\begin{equation}\label{eq.wrongequation}
\Lap_f \abs{\Rm} \geq - C\abs{\Rm}^2,
\end{equation}
and try to apply Proposition \ref{grs_bando_lemma2} to $u=\abs{\Rm}$, $v=C\abs{\Rm}$ with $\alpha\gamma=\frac{n}{2}$, but unfortunately, this does \emph{not} work: For example if $n=4$, we have $\gamma=\frac{n}{n-2}=2=\frac{n}{2}$, so would need to work with $\alpha=1$, but the proposition crucially needs $\alpha>1$ and, as can be clearly seen from \eqref{eq.valueofC}, the constant $C_2$ degenerates as $\alpha \searrow 1$. It is therefore necessary to improve the differential inequality \eqref{eq.wrongequation}, which we will do in the following. A key ingredient for this is the following improved Kato inequality for gradient Ricci shrinkers.
\begin{lemma}[Improved Kato Inequality]\label{Br00_kato_grs}
There exists a constant $\delta_K = \delta_K(n)>0$ such that the following holds. If $(M,g,f)$ is an $n$-dimensional oriented gradient Ricci shrinker, then
\begin{equation*}
(1+\delta_K)\abs{\nabla \abs{\Rm}}^2 \leq \abs{\nabla \Rm}^2.
\end{equation*}
\end{lemma}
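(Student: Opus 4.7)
The strategy is to derive the improved inequality from the algebraic symmetries of the Riemann tensor and the second Bianchi identity; crucially, the soliton equation itself is not needed, so the constant $\delta_K$ depends only on $n$, consistent with the statement. At a point where $\Rm \neq 0$, the standard Kato inequality $\abs{\nabla\abs{\Rm}}^2 \leq \abs{\nabla\Rm}^2$ follows from $\abs{\Rm}\nabla_m\abs{\Rm} = R^{ijkl}\nabla_m R_{ijkl}$ and Cauchy--Schwarz. Equality for each index $m$ would force the ansatz $\nabla_m R_{ijkl} = \psi_m R_{ijkl}/\abs{\Rm}$ for some $1$-form $\psi$. But then the differential Bianchi identity $\nabla_m R_{ijkl} + \nabla_i R_{jmkl} + \nabla_j R_{mikl} = 0$ reduces to the algebraic relation $\psi_m R_{ijkl} + \psi_i R_{jmkl} + \psi_j R_{mikl} = 0$, and combining this with the pair and first-Bianchi symmetries of $\Rm$ (e.g.\ by contracting with $\psi^m R^{ijkl}$) forces $\psi \equiv 0$, i.e.\ $\nabla\Rm \equiv 0$ at that point. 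So equality in the standard Kato inequality is excluded at every point where $\nabla\Rm \neq 0$.

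To upgrade this qualitative gap into a quantitative $\delta_K(n) > 0$, my plan is to follow the refined Kato framework of Branson \cite{Br00_kato_grs} (see also Calderbank--Gauduchon--Herzlich). One views $\nabla\Rm$ as a section of $T^\ast M \otimes \mathcal{R}$, where $\mathcal{R}$ is the $O(n)$-irreducible bundle of algebraic curvature tensors, and decomposes $T^\ast M \otimes \mathcal{R}$ into its $O(n)$-irreducible summands. The Cauchy--Schwarz saturating tensor $(\psi/\abs{\Rm}^2) \otimes R$ lies in a single distinguished summand, while the differential Bianchi identity forces $\nabla \Rm$ into the complementary summands (or grants only a strictly constrained component in the distinguished one). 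The improvement constant $\delta_K$ then emerges as the associated ``conformal weight'' or eigenvalue gap.

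I expect the main obstacle to be the representation-theoretic bookkeeping needed to identify these summands explicitly and extract a sharp value of $\delta_K$. Since we only need the existence of \emph{some} positive constant, a more concrete route is available: define the deficit tensor $T_{mijkl} := \nabla_m R_{ijkl} - \psi_m R_{ijkl}/\abs{\Rm}^2$ with $\psi_m := \abs{\Rm}\nabla_m\abs{\Rm}$, so that $T$ is pointwise orthogonal (in the last four indices) to $R$ and the orthogonal splitting gives $\abs{\nabla\Rm}^2 = \abs{\nabla\abs{\Rm}}^2 + \abs{T}^2$. Plugging the decomposition $\nabla\Rm = (\psi/\abs{\Rm}^2) \otimes R + T$ into the differential Bianchi identity and squaring yields, after rearranging and using the algebraic symmetries of $R$, an inequality of the form $\abs{T}^2 \geq c(n)\abs{\nabla\Rm}^2$ for some $c(n)>0$. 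This rearranges to $(1+\delta_K)\abs{\nabla\abs{\Rm}}^2 \leq \abs{\nabla\Rm}^2$ with $\delta_K := c(n)/(1-c(n)) > 0$, completing the proof.
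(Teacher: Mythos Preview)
There is a genuine gap: your central claim that the soliton equation is not needed is false. Take $M=\Sigma^2\times\mathbb{R}^{n-2}$ with the product metric, where $\Sigma$ is a surface with nowhere-vanishing, non-constant Gauss curvature $K$. In normal coordinates at a point, the only nonzero components of $\Rm$ are $R_{1212}=K$ (and its symmetries), and $\nabla_a R_{ijkl}=(\partial_a K)\,\omega_{ij}\omega_{kl}$ with $\omega=e^1\wedge e^2$. Hence $\nabla\Rm=\psi\otimes\Rm$ with $\psi=d(\log K)$, so $\abs{\nabla\abs{\Rm}}^2=\abs{\nabla\Rm}^2$ exactly. This shows two things. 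First, your qualitative step fails: the algebraic relation $\psi_{[m}R_{ij]kl}=0$ does \emph{not} force $\psi=0$ (here $\psi\wedge\omega$ is a $3$-form on a $2$-plane, hence vanishes). Second, your ``concrete route'' with the deficit tensor also fails: in this example $T\equiv 0$ while $\nabla\Rm\neq 0$, so no inequality $\abs{T}^2\geq c(n)\abs{\nabla\Rm}^2$ with $c(n)>0$ can hold from the second Bianchi identity alone.

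The paper's proof uses exactly the ingredient you discarded. In the Branson framework one needs $D\Rm=0$ for an operator $D$ that is a sum of Stein--Weiss operators with $D^\ast D$ \emph{elliptic}; taking $D=d$ alone gives $D^\ast D=d^\ast d$, which is not elliptic on the curvature bundle, and indeed the example above shows no refined constant is available. The shrinker equation enters via the computation $\div\Rm=\iota_{\nabla f}\Rm$, equivalently $d^\ast_f\Rm=0$, so that $D=d+d^\ast_f$ satisfies $D\Rm=0$ and $D^\ast D=\Lap^H_f$ is elliptic. This is the shrinker analogue of the Einstein case in Bando--Kasue--Nakajima, where $\div\Rm=0$ and one uses $d+d^\ast$. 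Without a divergence-type identity of this sort, the improved Kato inequality for $\Rm$ simply does not hold.
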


\begin{proof}
One can deduce an improved Kato inequality from an explicit calculation similar to the work of Bando--Kasue--Nakajima \cite{BKN89_alecoords} in the Einstein case. Here however, we will rely on a general framework, due to Branson \cite{Br00_kato} (see also Calderbank--Gauduchon--Herzlich \cite{CGH00_kato} for a similar result with a quite different proof), for determining when an improved Kato inequality holds on an oriented manifold. Specifically, in order to apply Theorem $4$ in \cite{Br00_kato} we need to consider a first order operator $D$ and a tensor bundle $T$ with sections $\psi$. Then, if $D^\ast D$ is elliptic when acting on $T$ and $D\psi = 0$ we will have an improved Kato inequality for $\psi$ away from its zero set. Such conditions are typically satisfied for a curvature tensor because of the Bianchi identites and some extra structure, which in our case is the shrinker equation.

Branson's framework requires one to work with an operator $D$ which is the sum of generalised gradients (also called Stein--Weiss operators), two examples of which are the exterior derivative $d$ and its adjoint $d^\ast$ acting on differential forms, see \cite{SW68}. Viewing $\Rm$ as a vector bundle valued $2$-form in $\Omega^2\left(M, \mathrm{End}\left(TM\right)\right)$ and taking $d$ to be the exterior \textit{covariant} derivative, we note that $d \Rm = 0$ by the second Bianchi identity. 

Instead of $d^\ast$, we would like to work with $d^\ast_f = -\div_f = - e^f \div (e^{-f} \cdot)$, the adjoint of $d$ with respect to $e^{-f}dV_g$. This is the natural adjoint to work with in the shrinker setting, but it is not immediately clear if it is a Stein--Weiss operator. However we can use that forms are linear with respect to smooth functions which gives
\begin{equation*}
\div_f \Rm\left(\cdot\right) = e^f \div\left(e^{-f}\Rm\left(\cdot,\cdot\right)\right) = e^f \div\left(\Rm\left(e^{-f}\cdot, \cdot\right)\right).
\end{equation*} 
Thus we are actually dealing with $\div$, or equivalently $d^\ast$, which we know is Stein--Weiss, except now we have applied a transformation to the domain of $\Rm$. However, since this transformation is confromal the new domain is isomorphic to $TM$. This is enough for our purposes, since being Stein--Weiss is an algebraic property. Using the second Bianchi identity, the shrinker quation \eqref{eq.shrinker}, and the commutator rule we can compute the following in coordinates:
\begin{equation}\label{eq.weighteddiv}
\begin{aligned}
\div \Rm &= \nabla_p \Rm_{ij\ell p}\\
&= \nabla_j \Ric_{i\ell} - \nabla_i \Ric_{j\ell}\\
&= \nabla_j \big(\tfrac{1}{2}g_{i\ell} - \nabla_i \nabla_\ell f\big) - \nabla_i \big(\tfrac{1}{2}g_{j\ell} - \nabla_j \nabla_\ell f\big)\\
&= -\nabla_j \nabla_i \nabla_\ell f + \nabla_i \nabla_j \nabla_\ell f \\
&= \Rm_{ij\ell p}\nabla_p f.
\end{aligned}
\end{equation}
This is equivalent to $d^\ast_f \Rm = 0$.

With all of this in mind, we take $D = d + d^\ast_f$ and have $D \Rm = 0$. This also gives $D^\ast D = \left(d + d^\ast_f\right)^2 = \Lap^H_f$ where $\Lap^H_f$ is the f-Hodge Laplacian, which is certainly elliptic. Therefore we can apply Theorem $4$ in \cite{Br00_kato} to get the desired improved Kato inequality away from the set of points where $\Rm = 0$. However, such a set is empty on a non-trivial shrinker. This completes the proof.

To be precise, in dimension $n=4$, we need to split $2$-forms into their self-dual and anti-self-dual parts in order to obtain Stein--Weiss operators $d_{\pm}$ and $(d^\ast_f)_{\pm}$, see Branson's work in \cite{Br97_SW} for details.
\end{proof}

As a corollary, we obtain the following improvement of \eqref{eq.wrongequation}.
\begin{corollary}[Improved Differential Inequality for the Riemann Tensor on a Ricci Shrinker]\label{grs_rm_ineqs}
There exists a constant $C_K=C_K(n)<\infty$ such that for every $n$-dimensional oriented gradient Ricci shrinker $(M,g,f)$ and $\delta_K(n)$ from the improved Kato inequality, we have
\begin{equation}\label{eq.improveddiffineq}
\Lap_f \abs{\Rm}^{1-\delta_K} \geq -C_K \abs{\Rm}^{2-\delta_K},
\end{equation}
where $\Lap_f u = \Lap u - \scal{\nabla f,\nabla u}$ denotes the drift Laplacian on $(M,g,f)$.
\end{corollary}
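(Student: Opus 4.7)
The plan is to combine a Lichnerowicz-type formula for $\Rm$ on a gradient shrinker with the improved Kato inequality from Lemma \ref{Br00_kato_grs} and then to exploit a chain-rule cancellation for $u^{1-\delta_K}$.

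First, I would establish (or recall) the identity
\begin{equation*}
\Lap_f \Rm = \Rm + \Rm \ast \Rm,
\end{equation*}
where $\ast$ denotes a dimensional quadratic contraction. This is obtained by differentiating the contracted second Bianchi identity in the form \eqref{eq.weighteddiv}, substituting the shrinker equation \eqref{eq.shrinker} to trade $\Hess f$ for $\tfrac{1}{2}g - \Ric$, and commuting covariant derivatives (the commutator terms produce exactly the $\Rm \ast \Rm$ contribution). Pairing with $\Rm$ and using the standard Bochner identity $\tfrac{1}{2}\Lap_f |\Rm|^2 = |\nabla \Rm|^2 + \langle \Lap_f \Rm,\Rm\rangle$ yields
\begin{equation*}
\tfrac{1}{2}\Lap_f |\Rm|^2 \geq |\nabla \Rm|^2 + |\Rm|^2 - C|\Rm|^3 \geq |\nabla \Rm|^2 - C|\Rm|^3
\end{equation*}
for a dimensional constant $C$.

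Next, I would set $u := |\Rm|$ and use $\tfrac{1}{2}\Lap_f u^2 = u\Lap_f u + |\nabla u|^2$ together with Lemma \ref{Br00_kato_grs} (which gives $|\nabla \Rm|^2 \geq (1+\delta_K)|\nabla u|^2$) to derive
\begin{equation*}
\Lap_f u \geq \delta_K\,\frac{|\nabla u|^2}{u} - C u^2.
\end{equation*}
Recall that on a non-trivial shrinker the zero set of $\Rm$ is empty (as already used in Lemma \ref{Br00_kato_grs}), so no regularisation is needed. Finally, with $\beta := 1 - \delta_K$, the chain rule yields
\begin{equation*}
\Lap_f u^\beta = \beta u^{\beta-1}\Lap_f u + \beta(\beta-1) u^{\beta-2}|\nabla u|^2 \geq \beta\bigl[\delta_K + (\beta-1)\bigr] u^{\beta-2}|\nabla u|^2 - \beta C u^{\beta+1}.
\end{equation*}
The algebraic identity $\delta_K + (\beta-1) = 0$ eliminates the gradient term exactly, leaving $\Lap_f u^{1-\delta_K} \geq -C_K u^{2-\delta_K}$ with $C_K := (1-\delta_K)C$.

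The only real obstacle is the first step: verifying the Lichnerowicz-type formula $\Lap_f \Rm = \Rm + \Rm \ast \Rm$ on a shrinker. This is a known but tensorial computation, requiring one to carefully track which quadratic contractions appear when commuting $\nabla_p\nabla_q$ past the Bianchi-type identity \eqref{eq.weighteddiv}; once this is in place, the remainder is a purely algebraic chain-rule cancellation and the Kato improvement of Lemma \ref{Br00_kato_grs} does all the geometric work. The estimate is otherwise sharp in the precise sense needed to apply Proposition \ref{grs_bando_lemma2} with $u = |\Rm|^{1-\delta_K}$ and $v = C_K|\Rm|$, noting that the exponent relation $\alpha\gamma = \tfrac{n/2}{1-\delta_K}$ now admits a choice of $\alpha>1$ as required.
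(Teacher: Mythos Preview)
Your proposal is correct and follows essentially the same route as the paper: you establish the Lichnerowicz-type identity $\Lap_f \Rm = \Rm + Q(\Rm)$ (the paper's Claim \ref{rm_drift_evo}), invoke the improved Kato inequality of Lemma \ref{Br00_kato_grs}, and then exploit the chain-rule cancellation at the exponent $1-\delta_K$, arriving at the same constant $C_K=(1-\delta_K)\bar{C}$. The only cosmetic difference is that the paper expands $\Lap_f|\Rm|^{1-\delta_K}$ directly rather than passing through the intermediate inequality $\Lap_f u \geq \delta_K u^{-1}|\nabla u|^2 - Cu^2$, but the underlying computation is identical.
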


\begin{proof}
The proof is in two steps. We first show the following shrinker version of the evolution equation of the Riemann tensor along the Ricci flow.
\begin{claim}\label{rm_drift_evo}
The Riemann tensor on a gradient Ricci shrinker $(M,g,f)$ satisfies the following equation
\begin{equation*}
\Lap_f \Rm = \Rm + Q(\Rm),
\end{equation*}
where $Q(\Rm)$ is a quadratic expression in $\Rm$.
\end{claim}

\begin{proof}
In the argument below, the quadratic expression $Q(\Rm)$ may change from line to line. Working in coordinates, we first note that using the commutator rule, the second Bianchi identity, and \eqref{eq.weighteddiv} we have
\begin{equation}\label{eq.rmevo1}
\begin{aligned}
\nabla_p \nabla_p \Rm_{ijk \ell} &= -\nabla_p \nabla_k \Rm_{ij \ell p} - \nabla_p \nabla_\ell \Rm_{ijpk}\\
&= -\nabla_k \nabla_p \Rm_{ij \ell p} - \nabla_\ell \nabla_p \Rm_{ijpk} + Q(\Rm)\\
&= \nabla_k \Rm_{ji\ell p}\nabla_p f + \nabla_\ell \Rm_{ijkp}\nabla_p f + \Rm_{ji\ell p}\nabla_k \nabla_p f + \Rm_{ijkp} \nabla_\ell \nabla_p f + Q(\Rm).
\end{aligned}
\end{equation}
Using the second Bianchi identity for the terms involving first derivatives of the Riemann tensor yields $(\nabla_k \Rm_{ji \ell p} + \nabla_\ell \Rm_{ijkp})\nabla_p f = \nabla_p \Rm_{ijk\ell} \nabla_p f$. The terms involving second derivatives of the shrinker potential are handled using the shrinker equation \eqref{eq.shrinker} one last time:
\begin{align*}
\Rm_{ji \ell p}\nabla_k \nabla_p f + \Rm_{ijkp} \nabla_\ell \nabla_p f &= \Rm_{ji \ell p}\big(\tfrac{1}{2}g_{kp} - \Ric_{kp}\big) + \Rm_{ijkp}\big(\tfrac{1}{2}g_{\ell p} - \Ric_{\ell p}\big)\\
&= \Rm_{ijk \ell} + Q(\Rm).
\end{align*}
Putting everything together, the claim follows.
\end{proof}

Using the identity $\nabla \abs{\Rm} = \abs{\Rm}^{-1}\scal{\nabla \Rm, \Rm}$ and the improved Kato inequality
\begin{align*}
\Lap_f \abs{\Rm}^{1-\delta_K} &= (1-\delta_K) \nabla \cdot (\scal{\nabla \Rm, \Rm} \abs{\Rm}^{-1-\delta_K}) - (1-\delta_K)\scal{\scal{\nabla f, \nabla \Rm}, \Rm }\abs{\Rm}^{-1-\delta_K}\\
&= (1-\delta_K) \scal{\Lap \Rm, \Rm} \abs{\Rm}^{-1-\delta_K} - (1-\delta_K)\scal{\scal{\nabla f, \nabla \Rm}, \Rm }\abs{\Rm}^{-1-\delta_K}\\
&\quad+ (1-\delta_K)\abs{\nabla \Rm}^2 \abs{\Rm}^{-1-\delta_K} - (1-\delta_K)(1+\delta_K)\scal{\nabla \Rm, \Rm}\nabla \abs{\Rm} \abs{\Rm}^{-2-\delta_K}\\
&\geq (1-\delta_K) \scal{\Lap_f \Rm, \Rm} \abs{\Rm}^{-1-\delta_K}.
\end{align*}
Thus, using Claim \ref{rm_drift_evo} as well as the fact that $Q(\Rm)\geq - \bar{C}\abs{\Rm}^2$ for some constant $\bar{C}$, we find
\begin{align*}
\Lap_f \abs{\Rm}^{1-\delta_K} &\geq (1-\delta_K)\scal{\Rm + Q(\Rm), \Rm}\abs{\Rm}^{-1-\delta_K}\\
&= (1-\delta_K)\abs{\Rm}^{1-\delta_K} + (1-\delta_K)\scal{Q(\Rm), \Rm}\abs{\Rm}^{-1-\delta_K}\\
&\geq -(1-\delta_K)\bar{C} \abs{\Rm}^{2-\delta_K}
\end{align*}
Hence the corollary follows by setting $C_K := (1-\delta_K)\bar{C}$.
\end{proof}

We can now combine this improved differential inequality with Proposition \ref{grs_bando_lemma2} to obtain an energy estimate in neck regions for oriented gradient shrinkers as desired. This is the main result of this section.
\begin{theorem}[Energy Estimate in Necks for Ricci Shrinkers]\label{no_neck_energy}
Given $n \geq 4$, $r\geq 2$, and $\underline{\mu} > -\infty$, there exist $\eps_{\mathrm{ee}}>0$, $\sigma_3>0$ and $C_3<\infty$ such that the following holds.

Let $(M,g,f)$ be an $n$-dimensional oriented gradient Ricci shrinker with $\mu(g) \geq \underline{\mu}$. Take $q \in B_g(p,r+1)$ where $p := \argmin_M f$ and let $A_{s_1, s_2}(q) \subset M$ be the unique connected component of the geodesic annulus $\bar{A}_{s_1, s_2}(q)$ which satisfies the condition $A_{s_1, s_2}(q) \cap \partial B_g(q,s_2) \neq \emptyset$ (according to Lemma \ref{small_annuli}) and with
\begin{equation}\label{eq.annuluscondition1}
s_2 \leq \sigma_3, \qquad s_1 \leq \tfrac{1}{4} s_2.
\end{equation}
Finally, assume that
\begin{equation}\label{eq.annuluscondition2}
\int_{A_{s_1, s_2}(q)} \abs{\Rm_g}^{n/2}_g dV_g \leq \eps_{\mathrm{ee}}.
\end{equation}
Then we have 
\begin{equation*}
\int_{A_{s_1,s_2}(q)} \abs{\Rm_g}^{n/2}_g dV_g \leq C_3 \int_{A_{s_1, 2s_1}(q)\, \cup\,  A_{\frac{1}{2}s_2, s_2}(q)}\abs{\Rm_g}^{n/2}_g dV_g.
\end{equation*}
\end{theorem}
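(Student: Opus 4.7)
The plan is to combine the improved differential inequality from Corollary \ref{grs_rm_ineqs} with the analytic annulus estimate from Proposition \ref{grs_bando_lemma2}, choosing the exponents so as to sidestep the obstruction flagged right before Lemma \ref{Br00_kato_grs}. Specifically, the naive choice $u = \abs{\Rm_g}$ and $v = C \abs{\Rm_g}$ in Proposition \ref{grs_bando_lemma2} would force $\alpha = 1$ when $n=4$ in order to recover $u^{\alpha\gamma} = \abs{\Rm_g}^{n/2}$, and this value is exactly excluded by the degeneration of $C_2$ as $\alpha \searrow 1$. Corollary \ref{grs_rm_ineqs} is designed to shift the exponent off this critical value.

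I would set
\begin{equation*}
u := \abs{\Rm_g}^{1-\delta_K}, \qquad v := C_K \abs{\Rm_g}, \qquad \alpha := \tfrac{n-2}{2(1-\delta_K)},
\end{equation*}
so that Corollary \ref{grs_rm_ineqs} gives exactly $\Lap_f u \geq -C_K \abs{\Rm_g}^{2-\delta_K} = -uv$. Since $\delta_K > 0$ and $n \geq 4$, this choice satisfies $\alpha > 1$, which is the hypothesis required by Proposition \ref{grs_bando_lemma2}. With $\gamma = n/(n-2)$, one checks
\begin{equation*}
\alpha\gamma = \tfrac{n}{2(1-\delta_K)}, \qquad u^{\alpha\gamma} = \abs{\Rm_g}^{n/2},
\end{equation*}
so both sides of the proposition's conclusion match precisely the quantities appearing in the statement of the theorem. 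The local $L^\alpha$-integrability of $u$, i.e.\ local integrability of $\abs{\Rm_g}^{(n-2)/2}$, follows from \eqref{eq.annuluscondition2} together with the volume bound \eqref{eq.volumegrowth} via H\"older's inequality.

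To apply Proposition \ref{grs_bando_lemma2} it only remains to ensure the smallness hypothesis \eqref{eq.smallv} on $v$. From \eqref{eq.annuluscondition2} we get
\begin{equation*}
\int_{A_{s_1,s_2}(q)} v^{n/2} \, dV_g = C_K^{n/2} \int_{A_{s_1,s_2}(q)} \abs{\Rm_g}^{n/2} \, dV_g \leq C_K^{n/2}\, \eps_{\mathrm{ee}},
\end{equation*}
so it suffices to set $\eps_{\mathrm{ee}} := C_K^{-n/2} \eps_{\mathrm{ann}}$ and $\sigma_3 := \sigma_2$, where $\eps_{\mathrm{ann}}, \sigma_2$ are the constants provided by Proposition \ref{grs_bando_lemma2} for this value of $\alpha$. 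The conclusion of the proposition then reads
\begin{equation*}
\int_{A_{s_1,s_2}(q)} \abs{\Rm_g}^{n/2} \, dV_g \leq C_2 \int_{A_{s_1, 2s_1}(q)\, \cup\, A_{\frac{1}{2}s_2, s_2}(q)} \abs{\Rm_g}^{n/2} \, dV_g,
\end{equation*}
so the theorem follows with $C_3 := C_2$.

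I do not expect a serious obstacle at this stage: the key analytic input (the annulus estimate) and the key geometric input (the improved Kato inequality for shrinkers, which exploits the shrinker structure through the identity $d^\ast_f \Rm = 0$) have already been established. The only point requiring care is the verification that the $\delta_K$-shift in the exponent of $\abs{\Rm_g}$ simultaneously produces $\alpha > 1$ and $u^{\alpha\gamma} = \abs{\Rm_g}^{n/2}$; this is precisely the role played by Corollary \ref{grs_rm_ineqs}.
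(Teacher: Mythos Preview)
Your proposal is correct and follows essentially the same argument as the paper: the same choices $u=\abs{\Rm_g}^{1-\delta_K}$, $v=C_K\abs{\Rm_g}$, $\alpha=\tfrac{n-2}{2(1-\delta_K)}$, $\sigma_3=\sigma_2$, $\eps_{\mathrm{ee}}=C_K^{-n/2}\eps_{\mathrm{ann}}$, and $C_3=C_2$ are made, and Proposition~\ref{grs_bando_lemma2} is invoked in exactly this way.
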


\begin{proof}
We set $u:=\abs{\Rm}^{1-\delta_K}$ and $v:=C_K\abs{\Rm}$, where $\delta_K$ and $C_K$ are from Corollary \ref{grs_rm_ineqs}. Then \eqref{eq.improveddiffineq} is equivalent to $\Lap_f u \geq -uv$. Letting $\sigma_3=\sigma_2$ and $\eps_{\mathrm{ee}} = C_K^{-n/2}\eps_{\mathrm{ann}}$ (with $\sigma_2$ and $\eps_{\mathrm{ann}}$ given by Proposition \ref{grs_bando_lemma2}), we have
\begin{equation*}
\int_{A_{s_1,s_2}(q)} v^{\frac{n}{2}} dV_g \leq C_K^{n/2}\eps_{\mathrm{ee}} \leq \eps_{\mathrm{ann}}
\end{equation*}
and moreover for $\alpha:=\frac{n-2}{2(1-\delta_K)}>1$ the fact that $\abs{\Rm}\in L^{\frac{n}{2}}$ (and therefore by H\"older's inequality $\abs{\Rm}\in L^{\frac{n-2}{2}}$) shows that $u\in L^\alpha$. We can therefore apply Proposition \ref{grs_bando_lemma2} which yields the claimed estimate as $u^{\alpha\gamma}=\abs{\Rm}^{n/2}$ with $C_3=C_2(\alpha=\frac{n-2}{2(1-\delta_K)})$.
\end{proof}


\section{Construction of the Bubble Tree}\label{sec_bubbling}
It is finally time to construct the bubble tree and prove Theorem \ref{shrinker_bubbling}. So let $n\geq 4$ and let $(M_i,g_i,f_i)$ be a sequence of $n$-dimensional oriented gradient Ricci shrinkers with entropy uniformly bounded below $\mu(g_i)\geq\underline{\mu}>-\infty$ and basepoints $p_i=\argmin_M f_i$. If $n>4$, then additionally assume \eqref{eq.energybounds} -- recall that for $n=4$ this is always satisfied automatically. Finally, we also fix a small $\eps>0$, $k\in\mathbb{N}$, and $r\geq 2$ such that $\sQ \cap \partial B_{g_\infty}(p_\infty,r) = \emptyset$ and let $\eps_{\mathrm{neck}}$, $\sigma_1$ and $\gamma$ be the corresponding constants from Theorem \ref{shrinker_neck_thm}.

By the arguments from Section \ref{sec_blowup} and in particular \eqref{eq.energyconcentration}, we know that for each $q^\ell\in\sQ_r$ there are $M_i \ni q^\ell_i \to q^\ell$ such that the convergence of $B_{g_i}(p_i,r) \setminus \bigcup_{\ell} B_{g_i}(q^\ell_i,\delta)$ is smooth for any sufficiently small $\delta<<\delta_0$. In particular, we obtain
\begin{equation}\label{eq.energywithoutdeltaballs}
\lim_{\delta\to 0}\, \lim_{i \to \infty} \int_{B_{g_i}(p_i,r) \setminus \bigcup_{\ell} B_{g_i}(q^\ell_i,\delta)} \abs{\Rm_{g_i}}^{n/2}_{g_i} dV_{g_i} = \int_{B_{g_\infty}(p_\infty,r)} \abs{\Rm_{g_\infty}}^{n/2}_{g_\infty} dV_{g_\infty}.
\end{equation}
In the following, we investigate what happens inside the $\delta$-balls. In the following argument, we may only consider $\delta$ sufficiently small so that the regions $B_{g_\infty}(q^\ell,10\cdot \delta) \setminus \{q^\ell\}$ do not contain any other orbifold points, and $i$ sufficiently large so that all $B_{g_i}(q^\ell_i,\delta)$ are disjoint. This allows us to focus on a single orbifold point $q$. 

Given such a point $q \in \sQ_r$, we fix a corresponding sequence $M_i \ni q_i\to q$ along which the curvature concentrates in the sense of \eqref{eq.energyconcentration}. The task is then to extract a (finite) number of point-scale sequences that detect all the ALE bubbles that form at $q$.

\paragraph{The first bubble:}
Let $\bar{\eps}:=\min\{\eps_{\mathrm{reg}},\eps_{\mathrm{gap}},\eps_{\mathrm{neck}}, \eps_{\mathrm{ee}}\}$ where $\eps_{\mathrm{reg}}$ is the constant from the $\eps$-regularity result (Lemma \ref{shrinker_eps_reg}), $\eps_{\mathrm{gap}}$ is from Bando's gap result (Proposition \ref{prop.Bandogap}), $\eps_{\mathrm{neck}}$ has been chosen above as in the neck theorem (Theorem \ref{shrinker_neck_thm}), and $\eps_{\mathrm{ee}}$ is from the energy estimate in necks (Theorem \ref{no_neck_energy}). Set
\begin{equation*}
r_i^1 := \inf \Big\{ r>0 \, \Big\mid \int_{B_{g_i}(q,r)} \abs{\Rm_{g_i}}^{n/2}_{g_i} dV_{g_i} \geq \frac{\bar{\eps} }{2} \text{ for some } B_{g_i}(q,r) \subseteq B_{g_i}(q_i,\delta) \Big\}
\end{equation*}
and let $q_i^1$ be points in $M_i$ such that $B_{g_i}(q_i^1,r_i^1) \subseteq B_{g_i}(q_i,\delta)$ and
\begin{equation*}
\int_{B_{g_i}(q_i^1,r_i^1)} \abs{\Rm_{g_i}}^{n/2}_{g_i} dV_{g_i} \geq \frac{\bar{\eps} }{2}.
\end{equation*}
	
Clearly $r^1_i \to 0$, otherwise there is no curvature concentration as described by \eqref{eq.energyconcentration}. By Theorem \ref{blow-up.thm} the rescaled sequence $(M_i, \wt{g}_i = (r_i^1)^{-2}g_i, q_i^1)$ subconverges in the pointed orbifold Cheeger--Gromov sense to a complete, non-compact, Ricci-flat limit $(V^1, h^1, q_\infty^1)$ with bounded $L^{n/2}$ Riemannian curvature and which is ALE of order $n-1$ in general and ALE of order $n$ if either $n = 4$ or $(V^1,h^1)$ is K\"ahler. By Corollary \ref{cor.oneend}, $(V^1,h^1)$ has one end. Moreover, by the choice of $r_i^1$, any ball of radius $r\leq1$ with respect to the rescaled metric $(r_i^1)^{-2}g_i$ (and contained in $B_{\tilde{g}_i}(q_i,(r^1_i)^{-1}\delta)$) has energy at most $\bar{\eps} /2$ and hence the convergence and the limit are smooth everywhere by the characterisation of singular points (respectively points of bad convergence) in Theorem \ref{blow-up.thm}. We then conclude that
\begin{equation*}
\int_{B_{h^1}(q_\infty^1,1)} \abs{\Rm_{h^1}}^{n/2}_{h^1} dV_{h^1} \geq \frac{\bar{\eps} }{2}
\end{equation*}
which implies the limit is non-flat and hence a (smooth) ALE bubble as in Definition \ref{ale}. By smooth convergence, we conclude that
\begin{equation}\label{eq.firstbubbleenergy}
\lim_{R\to\infty} \lim_{i\to\infty} \int_{B_{g_i}(q_i^1,Rr_i^1)} \abs{\Rm_{g_i}}^{n/2}_{g_i} dV_{g_i} = \int_{V^1} \abs{\Rm_{h^1}}^{n/2}_{h^1} dV_{h^1}.
\end{equation}

We have now extracted the \emph{deepest} bubble corresponding to the \emph{smallest} scale, motivating the following definition.
\begin{definition}[Leaf and Intermediate Bubbles]\label{bubble_types}
An ALE bubble as in Definition \ref{ale} is called a \emph{leaf bubble} if it is smooth. If instead it has finitely many orbifold singularities it is called an \emph{intermediate bubble}.
\end{definition}

If there is further curvature concentration, we continue to extract more point-scale sequences. We first set
\begin{equation}\label{eq.defN}
N:= \frac{4E(2r)}{\bar{\eps}}
\end{equation}
and note that since $B_{g_i}(q_i,\delta)\subseteq B_{g_i}(p_i,2r)$ contains at most $E(2r)$ energy and our method detects \emph{disjoint} regions containing at least $\bar{\eps}/4$ energy, the process will terminate after a finite number of steps $N_q\leq N$.

\paragraph{The second bubble:}
First, in order to make sure we do not simply find the same bubble again, we pick $K^1 >> 1$ large enough, so that
\begin{equation*}
\int_{V^1 \setminus B_{h^1}(q_\infty^1,K^1)} \abs{\Rm_{h^1}}^{n/2}_{h^1} dV_{h^1} \leq \frac{\bar{\eps} }{10N},
\end{equation*}
with $N$ given by \eqref{eq.defN}, which is possible as $(V^1,h^1)$ has bounded $L^{n/2}$ curvature. From this, we conclude that for any constant $R>K^1$ we have
\begin{equation}\label{eq.notenoughenergy}
\int_{A_{K^1r_i^1, Rr_i^1}(q_i^1)} \abs{\Rm_{g_i}}^{n/2}_{g_i} dV_{g_i} \leq \frac{\bar{\eps} }{8N}
\end{equation}
for sufficiently large $i$. We then set
\begin{equation*}
r_i^2 := \inf \Big\{ r>0 \, \Big\mid \int_{B_{g_i}(q,r) \setminus B_{g_i}(q_i^1,K^1 r_i^1)} \abs{\Rm_{g_i}}^{n/2}_{g_i} dV_{g_i} \geq \frac{\bar{\eps} }{2} \text{ for some } B_{g_i}(q,r) \subseteq B_{g_i}(q_i,\delta) \Big\}
\end{equation*}
and let $q_i^2$ be points in $M_i$ such that $B_{g_i}(q_i^2,r_i^2) \subseteq B_{g_i}(q_i,\delta)$ and
\begin{equation}\label{eq.secondcon}
\int_{B_{g_i}(q_i^2,r_i^2) \setminus B_{g_i}(q_i^1,K^1 r_i^1)} \abs{\Rm_{g_i}}^{n/2}_{g_i} dV_{g_i} \geq \frac{\bar{\eps} }{2}.
\end{equation}
Note that $r^2_i \geq r^1_i$ by construction. We can assume that $r^2_i \to 0$, otherwise there is no more curvature concentration and the process of extracting point-scale sequences stops. We first claim the following.
\begin{claim}
The point-scale sequences satisfy
\begin{equation}\label{eq.claim}
\frac{r^2_i}{r^1_i} + \frac{d(q^1_i, q^2_i)}{r^2_i} \to \infty.
\end{equation}
\end{claim}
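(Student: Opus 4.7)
The plan is to argue by contradiction: assume that along some subsequence the quantity $\frac{r^2_i}{r^1_i} + \frac{d_{g_i}(q^1_i, q^2_i)}{r^2_i}$ stays bounded by a constant $C < \infty$. Since both terms are non-negative, each is individually bounded. In particular $r_i^2/r_i^1 \leq C$ (and recall $r_i^2 \geq r_i^1$ by construction), and $d_{g_i}(q_i^1, q_i^2) \leq C r_i^2 \leq C^2 r_i^1$. Rescaling by $(r_i^1)^{-2}$ and writing $\wt{g}_i := (r_i^1)^{-2}g_i$, the point $q_i^2$ therefore lies at $\wt{g}_i$-distance at most $C^2$ from $q_i^1$, and $r_i^2/r_i^1 \in [1, C]$.

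By Theorem \ref{blow-up.thm} and the fact established above that $(V^1, h^1)$ is a smooth (leaf) ALE bubble to which $(M_i, \wt{g}_i, q_i^1)$ converges smoothly on compact subsets, we may pass to a further subsequence so that $q_i^2 \to q_\infty^2 \in V^1$ with $d_{h^1}(q_\infty^1,q_\infty^2) \leq C^2$ and $r_i^2/r_i^1 \to r_\infty^2 \in [1,C]$. Since all balls of bounded $\wt{g}_i$-radius around $q_i^1$ are eventually contained in $B_{\wt g_i}(q_i,\delta/r_i^1)$ (where the $\eps$-regularity energy bound applies), the convergence is genuinely smooth on these balls, so
\begin{equation*}
\int_{B_{\wt{g}_i}(q_i^2, r_i^2/r_i^1) \setminus B_{\wt{g}_i}(q_i^1, K^1)} \abs{\Rm_{\wt{g}_i}}^{n/2}_{\wt{g}_i} dV_{\wt{g}_i} \longrightarrow \int_{B_{h^1}(q_\infty^2, r_\infty^2) \setminus B_{h^1}(q_\infty^1, K^1)} \abs{\Rm_{h^1}}^{n/2}_{h^1} dV_{h^1}
\end{equation*}
as $i \to \infty$.

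By the scale invariance of the $L^{n/2}$ norm of the Riemann tensor, the left-hand side is identical to $\int_{B_{g_i}(q_i^2, r_i^2) \setminus B_{g_i}(q_i^1, K^1 r_i^1)} \abs{\Rm_{g_i}}^{n/2}_{g_i} dV_{g_i}$, which is at least $\bar{\eps}/2$ by the choice \eqref{eq.secondcon} of $(q_i^2, r_i^2)$. On the other hand, the right-hand side is bounded above by $\int_{V^1 \setminus B_{h^1}(q_\infty^1, K^1)} \abs{\Rm_{h^1}}^{n/2}_{h^1} dV_{h^1} \leq \bar{\eps}/(10N)$ by the choice of $K^1$. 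Since $N \geq 1$, we conclude $\bar{\eps}/2 \leq \bar{\eps}/(10N) \leq \bar{\eps}/10$, a contradiction.

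The only delicate point is making sure the smooth-convergence step is legitimate, i.e.\ that the sets $B_{\wt{g}_i}(q_i^2, r_i^2/r_i^1) \setminus B_{\wt{g}_i}(q_i^1, K^1)$ lie in a region where we actually have $C^\infty$ convergence of $\wt{g}_i$ to $h^1$; this is why we need to know in advance that $V^1$ is smooth (a leaf bubble at this stage of the construction) and that the rescaled $\eps$-regularity of \eqref{eq.rescaledepsreg} applies on all balls at uniformly bounded $\wt{g}_i$-distance from $q_i^1$. Everything else is a routine use of scale invariance and the definition of $K^1$.
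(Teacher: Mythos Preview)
Your argument is correct and follows the same core idea as the paper's: if both $r_i^2/r_i^1$ and $d(q_i^1,q_i^2)/r_i^2$ stay bounded, then the region $B_{g_i}(q_i^2,r_i^2)\setminus B_{g_i}(q_i^1,K^1 r_i^1)$ lies ``near infinity'' in the first bubble and thus cannot carry $\bar\eps/2$ energy, contradicting \eqref{eq.secondcon}.

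The only difference is in how you extract the energy bound on this region. You pass once more to the smooth limit $(V^1,h^1)$ and invoke the defining property of $K^1$ directly, whereas the paper has already packaged that information into the annulus estimate \eqref{eq.notenoughenergy} and simply observes the set inclusion
\[
B_{g_i}(q_i^2,r_i^2)\setminus B_{g_i}(q_i^1,K^1 r_i^1)\ \subseteq\ A_{K^1 r_i^1,(M^2+M)r_i^1}(q_i^1),
\]
then applies \eqref{eq.notenoughenergy} with $R=M^2+M$. The paper's route is slightly more economical (no need to reinvoke convergence, no care needed about varying domains in the limit), while yours makes the geometric picture in $V^1$ more explicit. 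Either way the contradiction is the same: at least $\bar\eps/2$ versus at most $\bar\eps/(8N)$ (respectively $\bar\eps/(10N)$).
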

\begin{proof}
If \eqref{eq.claim} is not true, then there is some number $M > K^1$ such that
\begin{equation*}
1 \leq \frac{r^2_i}{r^1_i} \leq M, \qquad \frac{d(q^1_i, q^2_i)}{r^2_i} \leq M
\end{equation*}
and therefore
\begin{equation*}
\frac{d(q^1_i, q^2_i)}{r^1_i} \leq M^2.
\end{equation*}
This implies that $q^2_i \in B_{g_i}(q^1_i, M^2 r_i^1)$ and thus
\begin{equation*}
B_{g_i}(q_i^2,r_i^2) \setminus B_{g_i}(q_i^1,K^1 r_i^1) \subseteq A_{K^1r_i^1, (M^2+M)r_i^1}(q_i^1).
\end{equation*}
In particular, \eqref{eq.notenoughenergy} and \eqref{eq.secondcon} now yield a contradiction (for $R=M^2+M$) and hence the claim must hold.
\end{proof}

\begin{remark}
An alternative approach is to not pass from $B_{g_i}(q_i^1,r_i^1)$ to the larger balls $B_{g_i}(q_i^1,K^1r_i^1)$ but instead mark and later discard the point-scale sequences that do not satisfy \eqref{eq.claim}. Such a strategy was used in the bubble tree construction in \cite{BS18_minsurf}.
\end{remark}

We then distinguish two cases.

\paragraph{Case 1:}
We have
\begin{equation*}
\frac{d(q^1_i, q^2_i)}{r^2_i} \to \infty.
\end{equation*}

This is the easy case because the bubbles are forming separately. Indeed, the reader can easily verify that if we blow up using $(q^2_i,r^2_i)$ in a similar way as for $(q^1_i,r^1_i)$ above, we get the same conclusion and the first bubble will disappear off at infinity. In particular, we obtain another \emph{leaf bubble}. Clearly, since $r^2_i \geq r^1_i$, we also have 
\begin{equation*}
\frac{d(q^1_i, q^2_i)}{r^1_i} \to \infty,
\end{equation*}
motivating the following definition.
\begin{definition}[Separable Bubbles]
If $(q_i^k,r_i^k)$ and $(q_i^\ell,r_i^\ell)$ are two point-scale sequences such that
\begin{equation*}
\frac{d(q^k_i, q^\ell_i)}{r^k_i} \to \infty \quad\text{and}\quad \frac{d(q^k_i, q^\ell_i)}{r^\ell_i} \to \infty,
\end{equation*}
then we say that the two associated bubbles $(V^k,h^k)$ and $(V^\ell,h^\ell)$ are \emph{separable}.
\end{definition}

\paragraph{Case 2:}
For some $M$, we have
\begin{equation}\label{eq.case2}
\frac{d(q^1_i, q^2_i)}{r^2_i} \leq M < \infty.
\end{equation}

This is the much more delicate case as the bubbles will form on top of each other. We consider the rescaled sequence $(M_i, \wt{g}_i = (r_i^2)^{-2}g_i, q_i^2)$ which by Theorem \ref{blow-up.thm} and Corollary \ref{cor.oneend} subconverges in the pointed orbifold Cheeger--Gromov sense to a complete, non-compact, Ricci-flat limit $(V^2, h^2, q_\infty^2)$ with bounded $L^{n/2}$ Riemannian curvature and with one end satisfying the required ALE condition. The assumption \eqref{eq.case2} shows that, by possibly passing to a further subsequence, $q_i^1$ converge to some $\hat{q}^1_\infty\in V^2$ (with $d(\hat{q}^1_\infty,q_\infty^2)\leq M$). Since by \eqref{eq.claim} and \eqref{eq.case2} we know that $r^1_i/r^2_i \to 0$, we have energy concentration for $\wt{g}_i$ around $q_i^1$ and hence the limit point $\hat{q}^1_\infty \in V^2$ is an orbifold point. By the choice of $r^2_i$, we see that there are no other energy concentrations and hence no further orbifold singularities. 

Note that for any $R\geq K^1$, by the choice of $r^2_i$, we have
\begin{equation*}
\int_{B_{g_i}(q_i^1,\frac{1}{R} r_i^2) \setminus B_{g_i}(q_i^1,R r_i^1)} \abs{\Rm_{g_i}}^{n/2}_{g_i} dV_{g_i} < \frac{\bar{\eps} }{2}.
\end{equation*}
and hence, for sufficiently large $i$, $A_{R r_i^1, \frac{1}{R} r_i^2}(q_i^1)$ satisfies the assumptions \eqref{eq.neckcondition1}--\eqref{eq.neckcondition2} of the neck theorem -- we therefore call such an annulus a \emph{neck region}. We claim the following.

\begin{claim}\label{claim.neck}
No energy is concentrating in the neck region in the following sense:
\begin{equation*}
\lim_{R\to\infty} \lim_{i\to\infty} \int_{A_{R r_i^1, \frac{1}{R} r_i^2}(q_i^1)} \abs{\Rm_{g_i}}^{n/2}_{g_i} dV_{g_i} = 0.
\end{equation*}
\end{claim}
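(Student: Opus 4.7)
The plan is to invoke the energy estimate in necks, Theorem \ref{no_neck_energy}, directly on the full annulus $A_{R r_i^1, \frac{1}{R} r_i^2}(q_i^1)$, which will reduce the claim to showing vanishing energy on the two dyadic endpoint annuli $A_{R r_i^1, 2R r_i^1}(q_i^1)$ and $A_{\frac{1}{2R} r_i^2, \frac{1}{R} r_i^2}(q_i^1)$. Each of these has a natural blow-up scale ($r_i^1$ and $r_i^2$, respectively), and in the corresponding rescaled picture the relevant integral can be computed in the limit.

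First I would verify the hypotheses of Theorem \ref{no_neck_energy}. For $R \geq K^1$ and $i$ sufficiently large (depending on $R$), the scale condition $\frac{1}{R} r_i^2 \leq \sigma_3$ and the thinness $R r_i^1 \leq \frac{1}{4R} r_i^2$ both hold thanks to $r_i^2 \to 0$ and $r_i^1/r_i^2 \to 0$, the latter a consequence of \eqref{eq.claim} and \eqref{eq.case2}. The required smallness $\int|\Rm|^{n/2}\leq \eps_{\mathrm{ee}}$ on the annulus follows because the annulus sits inside $B_{g_i}(q_i^1, \frac{1}{R} r_i^2) \setminus B_{g_i}(q_i^1, K^1 r_i^1)$, and applying the defining infimum for $r_i^2$ at radius $\frac{1}{R} r_i^2 < r_i^2$ gives an energy strictly less than $\bar\eps/2 \leq \eps_{\mathrm{ee}}$. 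The theorem then yields
\begin{equation*}
\int_{A_{R r_i^1, \frac{1}{R} r_i^2}(q_i^1)} \abs{\Rm_{g_i}}_{g_i}^{n/2} dV_{g_i} \leq C_3 \bigg( \int_{A_{R r_i^1, 2R r_i^1}(q_i^1)} + \int_{A_{\frac{1}{2R} r_i^2, \frac{1}{R} r_i^2}(q_i^1)} \bigg) \abs{\Rm_{g_i}}_{g_i}^{n/2} dV_{g_i}.
\end{equation*}

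For the inner dyadic annulus, rescaling by $r_i^1$ turns it into $A_{R, 2R}(q_i^1) \subset (M_i, (r_i^1)^{-2} g_i)$, which lies entirely in the region of smooth convergence to $V^1$ identified during the first-bubble extraction. Passing to the limit $i \to \infty$ gives $\int_{A_{R,2R}(q_\infty^1)} |\Rm_{h^1}|^{n/2} dV_{h^1}$, which vanishes as $R \to \infty$ since $\Rm_{h^1} \in L^{n/2}(V^1)$. For the outer dyadic annulus, rescaling by $r_i^2$ gives $A_{\frac{1}{2R}, \frac{1}{R}}(q_i^1) \subset (M_i, (r_i^2)^{-2} g_i)$. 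Here $q_i^1 \to \hat{q}^1_\infty$, the unique orbifold point of $V^2$, but for each \emph{fixed} $R$ the closed annulus of radii $(\frac{1}{2R}, \frac{1}{R})$ around $q_i^1$ remains at distance at least $\frac{1}{2R}$ from $\hat{q}^1_\infty$, so it lies in the smooth part of $V^2$ and the convergence is smooth on it. Consequently the energy converges to $\int_{A_{\frac{1}{2R},\frac{1}{R}}(\hat{q}^1_\infty)} |\Rm_{h^2}|^{n/2} dV_{h^2}$, which tends to zero as $R \to \infty$ by absolute continuity of the finite measure $|\Rm_{h^2}|^{n/2} dV_{h^2}$ (the annuli avoid $\hat{q}^1_\infty$ and have shrinking volume). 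Combining the three displays proves the claim.

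The main obstacle is the outer dyadic term, whose inner boundary tends to the orbifold singularity $\hat{q}^1_\infty$ of $V^2$ as $R \to \infty$, precisely where the convergence fails to be smooth. The resolution is to take the iterated limit in the order $i \to \infty$ first, $R \to \infty$ second: at the first stage the annulus still sits uniformly in the smooth part of $V^2$, so smooth convergence applies and transfers the question to a measure-theoretic one on the fixed orbifold $V^2$, where the absence of any concentrated curvature atom at $\hat{q}^1_\infty$ closes the argument.
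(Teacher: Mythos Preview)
Your proof is correct and follows essentially the same approach as the paper: apply Theorem~\ref{no_neck_energy} to the full neck to bound its energy by that of the two endpoint dyadic annuli, then show each endpoint contribution vanishes in the iterated limit via smooth convergence under the appropriate rescaling. The only minor difference is cosmetic: the paper argues that the rescaled endpoint annuli converge to annular pieces of \emph{flat cones} (coming from the ALE end of $V^1$ and the orbifold tangent cone of $V^2$ at $\hat q^1_\infty$), whereas you pass directly to the limits in $V^1$ and $V^2$ and invoke finiteness of $\int|\Rm|^{n/2}$ there; both routes yield the same conclusion.
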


\begin{proof}
It is clear that for $R\to\infty$ and $i\to\infty$ the innermost dyadic sub-annulus $A_{Rr_i^1, 2Rr_i^1}(q_i^1)$ converges smoothly, after rescaling by $2Rr_i^1$, to an annular portion of a flat cone $\mathcal{C}(S^{n-1}/\Gamma_1)$, where $\Gamma_1$ is given by the asymptotic structure of the end of the ALE bubble $(V^1,h^1)$. Similarly, the outermost dyadic sub-annulus $A_{\frac{1}{2R} r_i^2, \frac{1}{R} r_i^2}(q_i)$ converges smoothly, after rescaling by $\frac{1}{R} r_i^2$, to an annular portion of a flat cone $\mathcal{C}(S^{n-1}/\Gamma_2)$ where $\Gamma_2$ is given by the orbifold singularity structure at $\hat{q}^1_\infty$ in $(V^2,h^2)$. In particular, the energy of these inner- and outermost dyadic annuli is converging to zero.

If $R>\eps_{\mathrm{neck}}^{-1/2}$, then by the neck theorem, for large $i$ there exists an $\eps$-quasi isometry from the entire neck region $A_{Rr_i^1, \frac{1}{R}r_i^2}(q_i^1)$ to $\mathcal{C}_{Rr_i^1, \frac{1}{R}r_i^2}(S^{n-1}/\Gamma)$ for some $\Gamma$ with $\abs{\Gamma}<\gamma$. This shows that $\Gamma_1=\Gamma_2=\Gamma$. One might then show that it is possible to let $\eps\to 0$ as $R\to\infty$, so that, after rescaling, one obtains (smooth) convergence of \emph{any} dyadic sub-annulus $A_{\frac{1}{2}s_i,s_i}(q_i^1) \subseteq A_{Rr_i^1, \frac{1}{R} r_i^2}(q_i^1)$ to a portion of $\mathcal{C}(S^{n-1}/\Gamma)$ hence the energy on each such sub-annulus is converging to zero (as $R\to\infty$, $i\to\infty$). But this is not sufficient to conclude the claim, as the number of dyadic sub-annuli could increase very fast with $i\to\infty$, and hence a more careful argument is required -- a delicate fact that is unfortunately ignored in some articles proving bubbling theorems. This is exactly where our energy estimate in annular neck regions from the last section comes into play. In fact, Theorem \ref{no_neck_energy} shows that the energy over the entire neck region can be estimated by the energy of the innermost and outermost dyadic sub-annuli -- for which we have just deduced that the energy converges to zero (as $R\to\infty$ and $i\to\infty$). The claim therefore follows from this theorem.
\end{proof}

Endowed with this claim, we continue the analysis of Case 2. First, we fix some $R>K^1$ sufficiently large so that
\begin{equation*}
\lim_{i\to\infty} \int_{A_{R r_i^1, \frac{1}{R} r_i^2}(q_i^1)} \abs{\Rm_{g_i}}^{n/2}_{g_i} dV_{g_i} \leq \frac{\bar{\eps} }{8N}.
\end{equation*}
Combining this with \eqref{eq.notenoughenergy}, we obtain
\begin{equation*}
\lim_{i\to\infty}\int_{A_{K^1 r_i^1, \frac{1}{R} r_i^2}(q_i^1)} \abs{\Rm_{g_i}}^{n/2}_{g_i} dV_{g_i} \leq \frac{\bar{\eps} }{8N}+\frac{\bar{\eps} }{8N} = \frac{\bar{\eps} }{4N} \leq \frac{\bar{\eps} }{4}.
\end{equation*}
Finally, combining this with \eqref{eq.secondcon} shows that for this $R$, similarly as in the case of the first bubble
\begin{equation*}
\int_{B_{h^2}(q_\infty^2,1)\setminus B_{h^2}(\hat{q}^1_\infty,\frac{1}{R})} \abs{\Rm_{h^2}}^{n/2}_{h^2} dV_{h^2} = \lim_{i\to\infty} \int_{B_{g_i}(q_i^2,r_i^2) \setminus B_{g_i}(q_i^1,\frac{1}{R}r_i^2)} \abs{\Rm_{g_i}}^{n/2}_{g_i} dV_{g_i} \geq \frac{\bar{\eps} }{4},
\end{equation*}
therefore $(V^2,h^2)$ is non-flat and thus an \emph{intermediate ALE bubble} (see Definitions \ref{ale} and \ref{bubble_types}). We might also say that $(V^2,h^2)$ is a \emph{parent} of $(V^1,h^1)$.

By smooth convergence, we have
\begin{equation}\label{eq.secondbubbleenergy}
\lim_{R\to\infty}\lim_{i\to\infty} \int_{B_{g_i}(q_i^2,Rr_i^2) \setminus B_{g_i}(q_i^1,\frac{1}{R}r_i^2)} \abs{\Rm_{g_i}}^{n/2}_{g_i} dV_{g_i} = \int_{V^2} \abs{\Rm_{h^2}}^{n/2}_{h^2} dV_{h^2}.
\end{equation}
Therefore, combining \eqref{eq.firstbubbleenergy}, \eqref{eq.secondbubbleenergy}, and Claim \ref{claim.neck}, we obtain the energy estimate
\begin{align*}
\lim_{R\to\infty} \lim_{i\to\infty} \int_{B_{g_i}(q_i^2,Rr_i^2)} \abs{\Rm_{g_i}}^{n/2}_{g_i} dV_{g_i} &= \lim_{R\to\infty} \lim_{i\to\infty} \int_{B_{g_i}(q_i^2,Rr_i^2) \setminus B_{g_i}(q_i^1,\frac{1}{R} r_i^2)} \abs{\Rm_{g_i}}^{n/2}_{g_i} dV_{g_i}\\
&\quad + \lim_{R\to\infty} \lim_{i\to\infty} \int_{B_{g_i}(q_i^1,\frac{1}{R}r_i^2) \setminus B_{g_i}(q_i^1,Rr_i^1)} \abs{\Rm_{g_i}}^{n/2}_{g_i} dV_{g_i}\\
&\quad + \lim_{R\to\infty} \lim_{i\to\infty} \int_{B_{g_i}(q_i^1,Rr_i^1)} \abs{\Rm_{g_i}}^{n/2}_{g_i} dV_{g_i}\\
&= \int_{V^2} \abs{\Rm_{h^2}}^{n/2}_{h^2} dV_{h^2} + \int_{V^1} \abs{\Rm_{h^1}}^{n/2}_{h^1} dV_{h^1}.
\end{align*}
In particular, all the energy is fully accounted for by the two bubbles detected. 

\paragraph{Further bubbles:}
We then continue to extract more bubbles and to build bubble trees, a concept which is defined as follows.
\begin{definition}[Bubble Tree]\label{bubble_tree_definition}
A \emph{bubble tree} $T$ is a tree whose vertices are ALE bubbles and whose edges are neck regions. The single ALE end of each vertex is connected by a neck region (which it meets at its smaller boundary component) to its parent and possibly further ancestors toward the \emph{root bubble} of the tree $T$, while at possibly finitely many isolated orbifold points it is connected by more necks (which it meets at their larger boundary components) to its children and possibly further descendants toward leaf bubbles of $T$. We say two bubble trees $T_1$ and $T_2$ are separable if their root bubbles are separable.
\end{definition}

We proceed inductively, assuming that we have already extracted $(\ell-1)$ point-scale sequences and the associated bubbles that will form separable bubble trees $\{T_j\}_{j\in J}$. After possibly re-labelling, we assume $\{(V^j,h^j)\}_{j\in J}$ are their separable root bubbles and we can ignore all descendants for the argument that follows. Assume further that $K^j$ are picked (as described for the first bubble above) such that for $R>K^j$ we have
\begin{equation}\label{eq.Lnotenoughenergy}
\int_{A_{K^jr_i^j, Rr_i^j}(q_i^j)} \abs{\Rm_{g_i}}^{n/2}_{g_i} dV_{g_i} \leq \frac{\bar{\eps} }{8N}, \qquad \forall j\in J.
\end{equation}
We then set
\begin{equation*}
r_i^\ell := \inf \Big\{ r>0 \, \Big\mid \int_{B_{g_i}(q,r) \setminus \bigcup_{j\in J} B_{g_i}(q_i^j,K^j r_i^j)} \abs{\Rm_{g_i}}^{n/2}_{g_i} dV_{g_i} \geq \frac{\bar{\eps} }{2} \text{ for some } B_{g_i}(q,r) \subseteq B_{g_i}(q_i,\delta) \Big\}
\end{equation*}
and let $q_i^\ell$ be points in $M_i$ such that $B_{g_i}(q_i^\ell,r_i^\ell) \subseteq B_{g_i}(q_i,\delta)$ and
\begin{equation}\label{eq.lthcon}
\int_{B_{g_i}(q_i^\ell,r_i^\ell) \setminus \bigcup_{j\in J}  B_{g_i}(q_i^j,K^j r_i^j)} \abs{\Rm_{g_i}}^{n/2}_{g_i} dV_{g_i} \geq \frac{\bar{\eps} }{2}.
\end{equation}
Note that $r^\ell_i \geq r^j_i$ for each $j \in J$ by construction. We can assume that $r^\ell_i \to 0$, otherwise there is no more curvature concentration and the process of extracting point-scale sequences stops with the $(\ell-1)$ point-scale sequences already extracted.

As for the second bubble, there are now two cases.

\paragraph{Case 1:}
For all $j \in J$
\begin{equation*}
\frac{d(q^j_i, q^\ell_i)}{r^\ell_i} \to \infty.
\end{equation*}

In this case, just as in the case of the second bubble, if we blow up using $(q_i^\ell,r_i^\ell)$, all other bubbles disappear at infinity and we obtain another \emph{leaf bubble} which is separable from all all trees $T_j$ (thus forming a new tree consisting only of one vertex).

\paragraph{Case 2:}
For some $j \in J$
\begin{equation}\label{eq.caseL2}
\frac{d(q^j_i, q^\ell_i)}{r^\ell_i} \leq M^j < \infty.
\end{equation}

In this case, denote by $\mathcal{J}\subseteq J$ the set of indices $j$ for which \eqref{eq.caseL2} holds. By assumption $\mathcal{J}\neq\emptyset$. We claim the following.

\begin{claim}\label{claim.L2}
There exists $\eta>0$ such that for each pair of indices $j\neq k$ in $\mathcal{J}$ we have
\begin{equation*}
\liminf_{i\to\infty} \frac{d(q^j_i, q^k_i)}{r^\ell_i} \geq 2\eta>0.
\end{equation*}
\end{claim}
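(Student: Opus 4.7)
The plan is to argue by contradiction. Since $\mathcal{J} \subseteq J$ is finite ($|J| \leq N$), obtaining a uniform $\eta > 0$ reduces to showing that $\liminf_{i \to \infty} d_{g_i}(q_i^j, q_i^k)/r_i^\ell > 0$ for each individual pair $j \neq k$ in $\mathcal{J}$. Suppose for contradiction that there exist $j \neq k$ in $\mathcal{J}$ and a subsequence along which $s_i := d_{g_i}(q_i^j, q_i^k)$ satisfies $s_i/r_i^\ell \to 0$. The separability of the already-constructed trees $T_j$ and $T_k$ forces $s_i/r_i^j \to \infty$ and $s_i/r_i^k \to \infty$, so the three scales arrange themselves as $r_i^j, r_i^k \ll s_i \ll r_i^\ell$. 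I would then pass to the rescaled sequence $(M_i, \wt{g}_i := s_i^{-2} g_i, q_i^j)$ and apply Theorem \ref{blow-up.thm} to extract a pointed orbifold Cheeger--Gromov subsequential limit $(W, h_W, w_0)$ which is complete, non-compact, Ricci-flat, ALE of the appropriate order, has bounded $L^{n/2}$ curvature, and has one end by Corollary \ref{cor.oneend}.

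The contradiction will come from showing that $W$ simultaneously has at least two distinct orbifold singular points and is flat. For the first property: the tree $T_j$ sits inside a $\wt{g}_i$-ball of radius $K^j r_i^j/s_i \to 0$ about $w_0$ while carrying energy at least $\bar{\eps}/2$, so by the energy-concentration criterion \eqref{eq.energyconcentration} in Theorem \ref{blow-up.thm}, $w_0$ is an orbifold point of $W$; and since $d_{\wt{g}_i}(q_i^j, q_i^k) = 1$ by construction, passing to a further subsequence produces a second orbifold point $w_k := \lim q_i^k \in W$ with $d_{h_W}(w_0, w_k) = 1$ by the analogous argument using $T_k$. For flatness, I would exploit the minimality built into the definition of $r_i^\ell$: for any fixed $R$ and $i$ large enough that $Rs_i < r_i^\ell$, the $g_i$-energy of $B_{g_i}(q_i^j, R s_i) \setminus \bigcup_{j' \in J} B_{g_i}(q_i^{j'}, K^{j'}r_i^{j'})$ is strictly less than $\bar{\eps}/2$. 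The excluded balls collapse to isolated points in $W$, so using smooth convergence on the regular part and letting the excluded-ball radii shrink we obtain
\begin{equation*}
\int_{B_{h_W}(w_0, R)} \abs{\Rm_{h_W}}^{n/2}_{h_W} dV_{h_W} \leq \tfrac{\bar{\eps}}{2}
\end{equation*}
(integrated over the smooth part of the ball). Sending $R \to \infty$ and recalling $\bar{\eps} \leq \eps_{\mathrm{gap}}$, Bando's gap theorem (Proposition \ref{prop.Bandogap}) then applies to $W$ and forces it to be flat.

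The final step is to invoke the structure of flat Ricci-flat orbifolds: $W$ is a complete, flat orbifold with one ALE end and isolated singularities, so its orbifold universal cover is $\mathbb{R}^n$, the deck transformation group is forced into $O(n)$ by the ALE condition, and the isolated-singularities assumption forces this finite subgroup $\Gamma \subset O(n)$ to act freely on $\mathbb{R}^n \setminus \{0\}$; hence $W \cong \mathbb{R}^n/\Gamma$, which has exactly one singular point (at the origin). This contradicts the two distinct singular points $w_0 \neq w_k$, completing the proof. The main technical obstacle is the flatness step, where the minimality of $r_i^\ell$ in its defining infimum must be carefully combined with the fact that the collapsing bubble regions absorb all the concentrated energy in the smooth-convergence limit, leaving the regular part of $W$ with almost no energy.
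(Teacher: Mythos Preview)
Your proposal is correct and follows essentially the same route as the paper's proof: contradiction, rescale by the distance between a collapsing pair, obtain a flat ALE orbifold via Bando's gap (using the minimality of $r_i^\ell$), and contradict the existence of two distinct orbifold points. The only cosmetic difference is that the paper first isolates a maximal subset $\mathcal{J}' \subseteq \mathcal{J}$ of mutually collapsing indices and rescales by the \emph{minimum} pairwise distance $\mu_i = \min_{j,k \in \mathcal{J}'} d(q_i^j,q_i^k)$, whereas you work directly with a single offending pair; both choices yield the same two-singularity-versus-flat-cone contradiction.
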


Let us for the moment assume that the claim holds and continue. We consider the rescaled sequence $(M_i, \wt{g}_i = (r_i^\ell)^{-2}g_i, q_i^\ell)$ which by Theorem \ref{blow-up.thm} and Corollary \ref{cor.oneend} subconverges in the pointed orbifold Cheeger--Gromov sense to a complete, non-compact, Ricci-flat limit $(V^\ell, h^\ell, q_\infty^\ell)$ with bounded $L^{n/2}$ Riemannian curvature and with one end satisfying the required ALE condition. By assumption, after possibly passing to a further subsequence, for each $j\in\mathcal{J}$ the sequence $q_i^j$ converges to some $\hat{q}^j_\infty\in V^\ell$ (with $d(\hat{q}^j_\infty,q_\infty^\ell)\leq M^j$) and by Claim \ref{claim.L2} these limit points are distinct and at least distance $\eta$ away from one another. This is the crucial ingredient that allows us to proceed essentially in the exact same way as if there was only one such point. More precisely, as for the second bubble, we can conclude that these points $\hat{q}^j_\infty$ are orbifold points of $(V^\ell,h^\ell)$ and there are no other orbifold singularities. Furthermore, as in Claim \ref{claim.neck}, no energy is concentrating in the neck regions around $q_i^j$, i.e.
\begin{equation}\label{eq.Jnecks}
\lim_{R\to\infty} \lim_{i\to\infty} \int_{A_{R r_i^j, \frac{1}{R} r_i^\ell}(q_i^j)} \abs{\Rm_{g_i}}^{n/2}_{g_i} dV_{g_i} = 0, \quad \forall j\in\mathcal{J}.
\end{equation}
In particular, for $R>\max_{j\in\mathcal{J}} K^j$ sufficiently large, we obtain for every $j\in\mathcal{J}$
\begin{equation*}
\lim_{i\to\infty} \int_{A_{R r_i^j, \frac{1}{R} r_i^\ell}(q_i^j)} \abs{\Rm_{g_i}}^{n/2}_{g_i} dV_{g_i} \leq \frac{\bar{\eps} }{8N}.
\end{equation*}
Combining this with \eqref{eq.Lnotenoughenergy}--\eqref{eq.lthcon} and using the obvious estimate $\abs{\mathcal{J}} \leq N$ then implies
\begin{equation*}
\int_{B_{h^\ell}(q_\infty^\ell,1)\setminus \bigcup_{j\in \mathcal{J}} B_{h^\ell}(\hat{q}^j_\infty,\frac{1}{R})} \abs{\Rm_{h^\ell}}^{n/2}_{h^\ell} dV_{h^\ell} \geq \frac{\bar{\eps} }{2} - \abs{\mathcal{J}}\cdot \frac{\bar{\eps}}{4N} \geq \frac{\bar{\eps}}{4}.
\end{equation*}
Therefore $(V^\ell,h^\ell)$ is non-flat and thus a new parent bubble of all the bubbles $(V^j,h^j)$ with $j\in\mathcal{J}$. This means that the trees $\{T_j\}_{j\in\mathcal{J}}$ will be combined to a single tree with the new root $(V^\ell,h^\ell)$. Finally, as for the second bubble, we obtain the energy estimate
\begin{align*}
\lim_{R\to\infty} \lim_{i\to\infty} \int_{B_{g_i}(q_i^\ell,Rr_i^\ell)} \abs{\Rm_{g_i}}^{n/2}_{g_i} dV_{g_i} &= \lim_{R\to\infty} \lim_{i\to\infty} \int_{B_{g_i}(q_i^\ell,Rr_i^\ell) \setminus \bigcup_{j\in \mathcal{J}} B_{g_i}(q_i^j,\frac{1}{R} r_i^j)} \abs{\Rm_{g_i}}^{n/2}_{g_i} dV_{g_i}\\
&\quad + \sum_{j\in\mathcal{J}} \lim_{R\to\infty} \lim_{i\to\infty} \int_{B_{g_i}(q_i^j,\frac{1}{R}r_i^\ell) \setminus B_{g_i}(q_i^j,Rr_i^j))} \abs{\Rm_{g_i}}^{n/2}_{g_i} dV_{g_i}\\
&\quad + \sum_{j\in\mathcal{J}} \lim_{R\to\infty} \lim_{i\to\infty} \int_{B_{g_i}(q_i^j,Rr_i^j)} \abs{\Rm_{g_i}}^{n/2}_{g_i} dV_{g_i}\\
&= \int_{V^\ell} \abs{\Rm_{h^\ell}}^{n/2}_{h^\ell} dV_{h^\ell} + \sum_{j\in\mathcal{J}} \sum_{V^k\in T_j} \int_{V^k} \abs{\Rm_{h^k}}^{n/2}_{h^k} dV_{h^k}.
\end{align*}
In particular, all the energy is fully accounted for by $(V^\ell,h^\ell)$ and all its descendants. This uses the fact that no energy concentrates in the new neck regions according to \eqref{eq.Jnecks}, as well as the inductive assumption that the energy in each $B_{g_i}(q_i^j,Rr_i^j)$ is already fully accounted for by the bubbles in the tree $T_j$.

It remains to prove the claim.

\begin{proof}[Proof of Claim \ref{claim.L2}]
Assume towards a contradiction that there exists a non-empty subset $\mathcal{J}' \subseteq \mathcal{J}$ such that, after possibly passing to a subsequence
\begin{equation}\label{eq.57contradiction}
\lim_{i\to\infty} \frac{d(q^j_i, q^k_i)}{r^\ell_i} =0, \quad \forall j,k\in \mathcal{J}' .
\end{equation}
Then we set
\begin{equation*}
\mu_i =\min\{d(q^j_i, q^k_i) \mid  j,k\in \mathcal{J}' \} = d(q^{j_1}_i,q^{k_1}_i).
\end{equation*}
As we started with separable trees by the inductive assumption, we have $r^j_i/\mu_i \to 0$ for all $j\in \mathcal{J}'$. Therefore, as in the previous argument, the rescaled sequence $(M_i, \wt{g}_i = (\mu_i^1)^{-2}g_i, q_i^{j_1})$ subconverges in the pointed orbifold Cheeger--Gromov sense to a complete, non-compact, Ricci-flat limit $(X,h)$ with one ALE end and at most $\abs{\mathcal{J}'}$ \emph{isolated} orbifold singularities. Note also that there are at least two orbifold singularities (coming from the sequences $q_i^{j_1}$ and $q_i^{k_1}$). On the other hand, by \eqref{eq.57contradiction} we have $r_i^\ell/\mu_i \to \infty$ and therefore $(X,h)$ has energy at most $\bar{\eps}/2$ and is hence flat by Bando's gap result (Proposition \ref{prop.Bandogap}). But a flat ALE orbifold is either smooth or a flat cone with only one orbifold singularity, yielding the desired contradiction. The claim is proved.
\end{proof}

\paragraph{Termination of the process and completion of the proof of Theorem \ref{shrinker_bubbling}:}
As already noted above, the process of finding new point-scale sequences for $q\in\sQ_r$ terminates after a finite number of steps $N_q\leq N$ because for each bubble we have found disjoint regions in each $M_i$ containing at least $\bar{\eps}/4$ energy, and by assumption the energy in the ball $B_{g_i}(q_i,\delta)$ is uniformly bounded in $i$. We can therefore move on to the next orbifold point in $\sQ_r$ after a finite number of bubbles have been extracted.

By construction, Points \ref{point1} and \ref{point2} of Theorem \ref{shrinker_bubbling} obviously hold. As we made sure that the $\delta$-balls around the sequences corresponding to different orbifold points in $\sQ_r$ are disjoint, Point \ref{point4} also follows immediately. Point \ref{point3} can be seen as follows: If we take a point-scale sequence $(q_i,\varrho_i)$ as in the theorem, if it converges to a limit which is non-flat, then we must be able to detect a new region of energy concentration (disjoint to all the regions from our point-scale sequences), but this cannot happen as we have exhausted all such regions in our process. Hence, to complete the proof of the theorem, it only remains to prove the energy identity from Point \ref{point5}.

We first note that at each singular point $q\in\sQ$ there is only one tree forming. This is proved with the exact same argument a Claim \ref{claim.L2}. Assume that the tree forming at $q$ consists of bubbles $\{(V^k,h^k)\}_{k=1}^{N_q}$ and that its root bubble $(V^{N_q},h^{N_q})=:(V,h)$ is detected by a point-scale sequence $(q_i^{N_q},r_i^{N_q})=:(q_i,r_i)$. By the above construction, we already know that
\begin{equation*}
\lim_{R\to\infty} \lim_{i\to\infty} \int_{B_{g_i}(q_i,Rr_i)} \abs{\Rm_{g_i}}^{n/2}_{g_i} dV_{g_i} = \sum_{k=1}^{N_q} \int_{V^k} \abs{\Rm_{h^k}}^{n/2}_{h^k} dV_{h^k}.
\end{equation*}

There is one further neck region connecting the tree to $M_\infty$. As in Claim \ref{claim.neck}, we can show
\begin{equation*}
\lim_{R\to\infty} \lim_{i\to\infty} \int_{B_{g_i}(q_i,\frac{1}{R})\setminus B_{g_i}(q_i,Rr_i)} \abs{\Rm_{g_i}}^{n/2}_{g_i} dV_{g_i} = 0,
\end{equation*}
so that this neck also does not contribute to the total energy. Writing $\delta=1/R$, we therefore conclude that
\begin{equation*}
\lim_{\delta\to 0} \lim_{i\to\infty} \int_{B_{g_i}(q_i,\delta)} \abs{\Rm_{g_i}}^{n/2}_{g_i} dV_{g_i} = \sum_{k=1}^{N_q} \int_{V^k} \abs{\Rm_{h^k}}^{n/2}_{h^k} dV_{h^k}.
\end{equation*}
The claimed energy identity now immediately follows by repeating this for all orbifold points in $\sQ$ and combining with \eqref{eq.energywithoutdeltaballs}. Note that the condition $\sQ \cap \partial B_{g_\infty}(p_\infty,r) = \emptyset$ ensures that for sufficiently large $i$ and sufficiently small $\delta$, each $B_{g_i}(q_i,\delta)$ will be \emph{fully contained} in $B_{g_i}(q_i,r)$, avoiding potential issues with capturing ``half-bubbles''. This finishes the proof of Theorem \ref{shrinker_bubbling}.

\section{Proofs of the Corollaries from the Introduction}\label{sec_further}
We will first transform the energy identity into an identity for the Euler characteristic. This reinforces the notion that, while the formation of orbifold singularities can cause some topological degeneration, we can recover the lost topology in a quantitative and systematic way.		
\begin{proof}[Proof of Corollary \ref{cor.eulerchar}]
As noted in Anderson's work on Einstein manifolds \cite{An89_einstein}, bubbling can be excluded if the dimension $n$ is odd. In this setting we have $\mathcal{Q}_r, \mathcal{Q}^k = \emptyset$ and the result trivially holds. Therefore, we only need to consider the case when $n$ is even. The proof is rather direct and will be clear for experts; we therefore only give the full details for the easiest case $n = 4$ and then briefly point out the necessary modifications for higher dimensions. 

One of the main ingredients is the Chern-Gauss-Bonnet theorem for compact $4$-manifolds $N$ with boundary $\partial N$, namely
\begin{equation}\label{cgb_boundary}
\begin{aligned}
32\pi^2 \chi\left(N\right) &= \int_N \big(\abs{\Rm}^2 - 4\abs{\Ric}^2 + R^2\big) dV\\
&\quad + 16\int_{\partial N} \kappa_1 \kappa_2 \kappa_3 dA + 8\int_{\partial N} \big(\kappa_1 K_{23} + \kappa_2 K_{13} + \kappa_3 K_{12}\big) dA,
\end{aligned}
\end{equation}
see for example \cite{G84_euler}. Here $\kappa_a = \mathrm{II}(e_a,e_a)$ are the principal curvatures of $\partial N$ (hence $\{e_1,e_2,e_3\}$ is an orthonormal basis of $T\partial N$ diagonalising the second fundamental form), and $K_{ab} = \Rm\left(e_a,e_b,e_a,e_b\right)$ are the sectional curvatures of $N$. In particular, if $(V^k,h^k)$ is a Ricci-flat ALE orbifold with one ALE end with fundamental group $\Theta_k$ and with a finite discrete set (possibly empty) of orbifold points $\mathcal{Q}^k=\{q_\infty^{k,j}\}$ with isometry groups $\{\Gamma_{k,j}\}$, respectively, then \eqref{cgb_boundary} implies the well known formula
\begin{equation*}
\frac{1}{32\pi^2}\int_{V^k} \abs{\Rm_{h^k}}^2_{h^k} dV_{h^k} = \chi(V^k\setminus \mathcal{Q}^k)-\frac{1}{\abs{\Theta_k}}+\sum_{q_\infty^{k,j}\in\mathcal{Q}^k} \frac{1}{\abs{\Gamma_{k,j}}}
\end{equation*}

Now fix $r>0$. Take a sequence $(M_i,g_i, f_i,p_i)$ as in Theorem \ref{shrinker_bubbling}. Denote by $\mathcal{Q}_r = \mathcal{Q} \cap B_{g_\infty}(p_\infty, r)$ the orbifold points forming and fix some $q\in \mathcal{Q}_r$. Then denote by $\{(V^k,h^k)\}_{k=1}^{N_q}$ the ALE bubbles of the bubble tree $T_q$. Intermediate bubbles will have a non-empty discrete set of orbifold points $\mathcal{Q}^k=\{q_\infty^{k,j}\}$ (where the bubble tree is connected via neck regions to the children $\{(V^j,h^j)\}$) while for leaf bubbles $\mathcal{Q}^k=\emptyset$. 

By the neck theorem, as explained in the proof of Claim \ref{claim.neck}, the fundamental group at infinity $\Theta_j$ of each child bubble is the same as the orbifold group $\Gamma_{k,j}$ at $q_\infty^{k,j}$, hence these terms cancel each other when summing over all bubbles and for the entire tree $T_q$ at $q$ we find
\begin{equation}\label{cgb_onetree}
\frac{1}{32\pi^2}\sum_{k=1}^{N_q} \int_{V^k} \abs{\Rm_{h^k}}^2_{h^k} dV_{h^k} = \sum_{k = 1}^{N_q}\chi(V^k\setminus \mathcal{Q}^k)-\frac{1}{\abs{\Theta_{N_q}}}.
\end{equation}
Here $\Theta_{N_q}$ is the fundamental group at infinity of the root bubble $(V^{N_q},h^{N_q})$ of the tree $T_q$.

Similarly, we also have
\begin{equation*}
\frac{1}{32\pi^2}\bigg[\int_{B_{g_\infty}(p_\infty,r)} \big(\abs{\Rm}^2 - 4\abs{\Ric}^2 + R^2\big) dV_{g_\infty} + T(\partial B_{g_\infty}(p_\infty,r))\bigg]= \chi\big(B_{g_\infty}(p_\infty,r) \setminus \mathcal{Q}_r\big) + \sum_{q\in\mathcal{Q}_r} \frac{1}{\abs{\Gamma_q}}
\end{equation*}
where $\Gamma_q$ is the finite isometry group associated to the orbifold point $q$ and $T(\partial B_{g_\infty}(p_\infty,r))$ denotes the boundary integral in \eqref{cgb_boundary} above. Using once again the neck theorem for the neck connecting the root bubble of the tree $T_q$ to the smoothly converging body part, similarly as above, we find $\Theta_{N_q} =\Gamma_q$. Hence, by Point \ref{point5} of Theorem \ref{shrinker_bubbling} (respectively its version using the Chern-Gauss-Bonnet integrand $\abs{\Rm}^2 - 4\abs{\Ric}^2 + R^2$, which holds with identical proof\footnote{Note that in neck regions the energy vanishes and thus also the Chern-Gauss-Bonnet integral disappears. Instead in bubble regions, the energy and Chern-Gauss-Bonnet integrals agree due to Ricci-flatness}), we conclude
\begin{align*}
\lim_{i \to \infty} \chi(B_{g_i}(q_i,r)) &= \lim_{i\to\infty} \frac{1}{32\pi^2}\bigg[\int_{B_{g_i(p_i,r)}} \big(\abs{\Rm}^2 - 4\abs{\Ric}^2 + R^2\big) dV_{g_i} + T(\partial B_{g_i}(p_i,r))\bigg]\\
&=\frac{1}{32\pi^2}\bigg[\int_{B_{g_\infty}(p_\infty,r)} \big(\abs{\Rm}^2 - 4\abs{\Ric}^2 + R^2\big) dV_{g_\infty} + T(\partial B_{g_\infty}(p_\infty,r))\bigg]\\
&\quad + \frac{1}{32\pi^2}\sum_{q\in\mathcal{Q}_r}\sum_{k=1}^{N_q} \int_{V^k} \abs{\Rm_{h^k}}^2_{h^k} dV_{h^k}\\
&= \chi(B_{g_\infty}(q_\infty,r)\setminus \mathcal{Q}_r) + \sum_{q\in\mathcal{Q}_r} \frac{1}{\abs{\Gamma_q}}
 + \sum_{q \in \sQ_r} \bigg(\sum_{k = 1}^{N_q} \chi(V^k \setminus \mathcal{Q}^k)-\frac{1}{\abs{\Theta_{N_q}}}\bigg)\\
 &= \chi(B_{g_\infty}(q_\infty,r)\setminus \mathcal{Q}_r) + \sum_{q \in \sQ_r} \sum_{k = 1}^{N_q} \chi(V^k \setminus \mathcal{Q}^k).
\end{align*}
We have proved the result for $n=4$. In higher even dimensions, we can use the Chern--Gauss--Bonnet formula for a compact manifold with boundary  from Theorem $1.9.2$ in \cite{G04_asymptotic}:
\begin{equation}\label{high_dim_cgb}
\chi\left(N\right) = \int_N C_n ~\eps^I_J~ \mathcal{R}^{I,n}_{J,1} dV_g + \int_{\partial N} \sum^{n-1}_{l=0} C_{l,n}~ \eps^A_B~ \mathcal{R}^{A,2l}_{B,1}~ \mathrm{II}^{A,n-1}_{B,2l+1} dA_g.
\end{equation}
Here			
\begin{align*}
\eps^I_J &:= \eps_{i_1 \dots i_l j_1 \dots j_l} \\
\mathcal{R}^{I,t}_{J,s} &:= \Rm_{i_s i_{s+1} j_{s+1} j_s} \; \cdots\; \Rm_{i_{t-1}i_t j_t j_{t-1}}\\
\mathrm{II}^{I,t}_{J,s} &:= \mathrm{II}_{i_s j_s} \; \cdots \; \mathrm{II}_{i_t j_t}.
\end{align*}
where $\eps^I_J$ is shorthand for the Levi-Civita symbol, $\mathrm{II}_{ij}$ is the second fundamental form of $\partial N$, and $I,J$ are $(n-1)$ tuples of indices associated to an orthonormal basis $\{e_{i_1}, \ldots, e_{i_l}, e_{j_1}, \ldots, e_{j_l}\}$ of $T\partial N$ that diagonalises $\mathrm{II}_{ij}$. We also note the following:
\begin{enumerate}
\item The first term in Equation \eqref{high_dim_cgb} is the integral over a sum of products of $\frac{n}{2}$ Riemann curvature tensors and is bounded above by a multiple of the energy $E\left(r\right)$. Similar to the energy identity in Point \ref{point5} of Theorem \ref{shrinker_bubbling}, one can prove an identity for these integrals.
\item $\eps^A_B ~ \mathrm{II}^{A,n-1}_{B,1}$ is, up to a constant, the Gauss curvature of $\partial M$:
\begin{equation}\label{prin_curvs}
\eps^A_B ~ \mathrm{II}^{A,n-1}_{B,1} = \left(n-1\right) !\prod^{n-1}_{s=1} \kappa_s.
\end{equation}
where $\kappa_s$ is a principal curvature of $\partial N$. However, as in the argument in $n=4$, these terms are only needed at the boundary of $B_{g_i}(p_i),r)$ (as all ``inner'' boundary terms near orbifold points will appear twice with opposite sign and thus cancel out).
\end{enumerate}
Using Equation \eqref{high_dim_cgb} the proof is therefore the same as the $n=4$ case, up to dealing with the much more cumbersome notation.
\end{proof}


Now we will use the bubble tree construction to prove the local diffeomorphism finiteness result.

\begin{proof}[Proof of Corollary \ref{shrinker_diffeo_fin}]
Fix $r>0$. Take a sequence $(M_i,g_i, f_i,p_i)$ in $\mathcal{M}$ and assume for a contradiction that $M_i \cap B_{g_i}(p_i,r)$ have pairwise distinct diffeomorphism types. By Theorem \ref{hm_compactness_thm}, we obtain pointed orbifold convergence to an orbifold shrinker $(M_\infty,g_\infty,f_\infty,p_\infty)$. As before, let $\mathcal{Q}$ be the set of orbifold points. By possibly slightly enlarging $r$ (without relabelling it), we can assume that $\mathcal{Q} \cap \partial B_{g_\infty}(p_\infty, r) = \emptyset$. We then set $\mathcal{Q}_r = \mathcal{Q} \cap B_{g_\infty}(p_\infty, r)$. By Theorem \ref{shrinker_bubbling} we know that at each of the finitely many orbifold points $q\in \mathcal{Q}_r$ a finite number of ALE bubbles $\{(V^k,h^k)\}_{k=1}^{N_q}$ will be detected via point-scale sequences $(q_i^k,r_i^k)_{i\in\mathbb{N}}$, forming a bubble tree $T_q$. Finally assume that the last bubble, i.e. $(V^{N_q},h^{N_q})$, is the root bubble of $T_q$.

Next pick $R$ sufficiently large, so that for each bubble $(V^k,h^k)$ we have $R>K^k$ (where $K^k$ come from the bubble tree construction in Section \ref{sec_bubbling}) and $R>\eps^{-1/2}_{\mathrm{neck}}$ (where $\eps_{\mathrm{neck}}$ comes from the Neck Theorem \ref{shrinker_neck_thm}). 

Then each $B_{g_i}(p_i, r)$ can be covered by finitely many of the following regions:
\begin{enumerate}
\item \textbf{Body Regions:} These are the regions $B_{g_i}(p_i, r) \setminus \bigcup_{q\in \mathcal{Q}_r} B_{g_i}\big(q^{N_q}_i, \tfrac{1}{2R}\big)$. By the construction in Section \ref{sec_bubbling}, we have smooth convergence
\begin{equation*}
\Big(B_{g_i}(p_i, r) \setminus \bigcup_{q\in \mathcal{Q}_r} B_{g_i}\big(q^{N_q}_i, \tfrac{1}{2R}\big), g_i\Big) \to \Big(B_{g_\infty}(p_\infty, r) \setminus \bigcup_{q\in \mathcal{Q}_r} B_{g_\infty}\big(q, \tfrac{1}{2R}\big), g_\infty\Big)
\end{equation*}
and hence these regions will eventually all be diffeomorphic to each other.
\item \textbf{Bubble Regions:} Let $q$ be an orbifold point and $(V^k,h^k)$ a fixed ALE bubble of the bubble tree $T_q$. Denote by $\{(V^j,h^j)\}_{j\in J}$ its children in the bubble tree $T_q$ (with $J=\emptyset$ for a leaf bubble). Then the corresponding bubble regions are $B_{g_i}\big(q_i^k, 2R r_i^k\big) \setminus \bigcup_{j\in J} B_{g_i}\big(q^j_i, \tfrac{1}{2R}r^k_i\big)$. After rescaling with $(r_i^k)^{-2}$, these regions will smoothly converge to a region in $V^k \setminus \{q^{k,j}_\infty\}_{j \in J}$, more precisely
\begin{equation*}
\Big(B_{g_i}\big(q_i^k, 2R r_i^k\big) \setminus \bigcup_{j\in J} B_{g_i}\big(q^j_i, \tfrac{1}{2R}r^k_i\big), (r_i^k)^{-2} g_i\Big) \to \Big( B_{h^k}\big(q_\infty^k,2R\big) \setminus \bigcup_{j\in J} B_{h^k}\big(q^{k,j}_\infty,\tfrac{1}{2R}\big),h^k\Big).
\end{equation*}
This implies that the bubble regions will eventually be diffeomorphic to each other and this argument works for each of the finitely many bubbles $(V^k,h^k)$.
\item \textbf{Neck Regions:} Again, let $(V^k,h^k)$ be a fixed ALE bubble in a tree $T_q$ detected by the point-scale sequence $(q_i^k,r_i^k)$. Denote by $(V^\ell,h^\ell)$ its parent bubble, detected by $(q_i^\ell,r_i^\ell)$, if it exists. If instead $(V^k,h^k) = (V^{N_q},h^{N_q})$ is a root bubble and therefore does not have a parent, then we set $(q_i^\ell,r_i^\ell):=(q_i^k,1)$. Then $B_{g_i}(q^k_i, \tfrac{1}{R} r_i^\ell) \setminus B_{g_i}\big(q^k_i, Rr_i^k\big)$ are the corresponding neck regions. By Theorem \ref{shrinker_neck_thm}, for sufficiently large $i$ these annular regions will be diffeomorphic to an annulus on the cone $\mathcal{C}(\mathbb{S}^{n-1}/\Gamma)$ for some $\Gamma \subset O(n)$ with $\abs{\Gamma} \leq \gamma$ and thus in particular diffeomorphic to each other.
\end{enumerate}

The regions above are defined in such a way that each (annular) neck region overlaps with a bubble region on its innermost dyadic annulus and with the corresponding parent bubble region (or the body region in case of root bubbles) at its outermost dyadic annulus giving controlled regions where the diffeomorphism can be ``glued together''. In particular, after possibly passing to a subsequence, for $i$ sufficiently large $M_i \cap B_{g_i}(p_i,r)$ are all diffeomorphic to each other. This then obviously also holds true for the original (not enlarged) $r$, which is the desired contradiction. 
\end{proof}


\makeatletter
\def\@listi{%
  \itemsep=0pt
  \parsep=1pt
  \topsep=1pt}
\makeatother
{\fontsize{10}{12.5}\selectfont

\printaddress


\begin{thebibliography}{99}

\bibitem{AG90_diamgrowth}
U.~Abresch and D.~Gromoll. \textit{On Complete Manifolds with Non-Negative Ricci Curvature}. 
J. Amer. Math. Soc. Vol. 3, No. 2 (1990).

\bibitem{An89_einstein}
M.T.~Anderson. \textit{Ricci Curvature Bounds and Einstein Metrics on Compact Manifolds}. 
J. Amer. Math. Soc. Vol. 2, No. 3, pp. 455--490 (1989).

\bibitem{AC91_diffeofin}
M.T.~Anderson and J.~Cheeger. \textit{Diffeomorphism Finiteness for Manifolds with Ricci Curvature and $L^{n/2}$-Norm of Curvature Bounded}. 
Geom. Funct. Anal. Vol. 1, No. 3 (1991).

\bibitem{Ba20_nash}
R.H.~Bamler. \textit{Entropy and Heat Kernel Bounds on a Ricci Flow Background}. 
Preprint. arXiv:2008.07093v1.

\bibitem{Ba20_compactness}
R.H.~Bamler. \textit{Compactness Theory of the Space of Super Ricci Flows}. 
Preprint. arXiv:2008.09298v1.	

\bibitem{Ba20_structure}
R.H.~Bamler. \textit{Structure Theory of Non-Collapsed Limits of Ricci Flows}. 
Preprint. arXiv:2009.03243v1.

\bibitem{Ba90_bubbling}
S.~Bando. \textit{Bubbling Out of Einstein Manifolds}. 
T\^ohoku Math. J. Vol. 42, pp. 205--215 (1990). 

\bibitem{Ba90_correction}
S.~ Bando. \textit{Correction and Addition: Bubbling Out of Einstein Manifolds}. 
T\^ohoku Math. J. Vol. 42, pp. 587--588 (1990). 

\bibitem{BKN89_alecoords}
S.~Bando, A.~Kasue, and H.~Nakajima \textit{On A Construction of Coordinates at Infinity on Manifolds with Fast Curvature Decay and Maximal Volume Growth}. 
Invent. Math.Vol. 97, pp. 313--349 (1989).

\bibitem{Br97_SW}
T.~ Branson. \textit{Stein--Weiss Operators and Ellipticity}. 
J. Funct. Anal. Vol. 151, No. 2, pp. 334--383 (1997).

\bibitem{Br00_kato}
T.~Branson. \textit{Kato Constants in Riemannian Geometry}. 
Math Res. Lett. Vol. 7, pp. 245--261 (2000).

\bibitem{BC85_bubbling}
H.~Brezis and J.-M.~Coron. \textit{Convergence of Solutions of H-Systems or How to Blow Bubbles}. 
Arch. Ration. Mech. Anal. Vol. 89, pp. 21--56 (1985).

\bibitem{BS18_minsurf}
R.~Buzano and B.~Sharp. \textit{Qualitative and Quantitative Estimates for Minimal Hypersurfaces with Bounded Index and Area}. 
Trans. Amer. Math. Soc. Vol. 370, No. 6, pp. 4373--4399 (2018).

\bibitem{CGH00_kato}
D.M.J.~Calderbank, P. ~Gauduchon, and M.~Herzlich. \textit{Refined Kato Inequalities and Conformal Weights in Riemannian Geometry}.
J. Funct. Anal. Vol. 173, No. 1, pp. 214--255 (2000).

\bibitem{CS07_compactnesss}
H.-D.~Cao and N.~Sesum. \textit{A Compactness Result for K\"ahler-Ricci Solitons}. 
Adv. Math. Vol. 211, No. 2, pp. 794--818 (2007).

\bibitem{CQY07_bubbling}
S.-Y.A.~Chang, J.~Qing and P.~Yang. \textit{On a Conformal Gap and Finiteness Theorem for a Class of Four-Manifolds}. 
Geom. Funct. Anal. Vol. 17, pp. 404--434 (2007).

\bibitem{CN15_regularity}
J.~Cheeger and A.~Naber. \textit{Regularity of Einstein Manifolds and the Codimension $4$ Conjecture}. 
Ann. of Math. Vol. 182, pp. 1093--1165 (2015).

\bibitem{EMT11_typeI}
J,~Enders, R.~M\"uller and P.M.~Topping. \textit{On Type-I Singularities in Ricci Flow}. 
Comm. Anal. Geom. Vol. 19, No. 5, no. 5, pp. 905--922 (2011).

\bibitem{G84_euler}
P.~Gilkey. \textit{Invariance Theory, the Heat Equation, and the Atiyah-Singer Index Theorem}. 
Mathematics Lecture Series 11. Publish or Perish Inc. (1984).
	
\bibitem{G04_asymptotic}
P.~Gilkey. \textit{Asymptotic Formulae in Spectral Geometry}. 
Studies in Advanced Mathematics Vol. 43, Boca Raton, Florida: Chapman \& Hall/CRC (2004).

\bibitem{Ha88_surf}
R.S.~Hamilton. \textit{The Ricci Flow on Surfaces}. 
Contemp. Math., Vol. 71, pp. 237--262 (1988).

\bibitem{HM11_comp1}
R.~Haslhofer and R.~M\"uller. \textit{A Compactness Theorem for Complete Ricci Shrinkers}. 
Geom. Funct. Anal. Vol. 21 pp. 1091--1116 (2011).

\bibitem{HM15_comp2}
R.~Haslhofer and R.~M\"uller. \textit{A Note on the Compactness Theorem for $4$d Ricci Shrinkers}. 
Proc. Amer. Math. Soc. Vol. 143, No. 10, pp. 4433--4437 (2015).

\bibitem{MM15_typeI}
C.~Mantegazza and R.~M\"uller \textit{Perelman's Entropy Functional at Type I Singularities of the Ricci Flow}. 
J. Reine Angew. Math. Vol. 703, pp. 173--199 (2015).

\bibitem{MW15_cone}
O.~Munteanu and J.~Wang. \textit{Geometry of Shrinking Ricci Solitons}. 
Compositio Math. Vol. 151, pp. 2273--2300 (2015).

\bibitem{Na10_solitons}
A.~Naber. \textit{Noncompact Shrinking Four Solitons with Nonnegative Curvature}. 
J. Reine Angew. Math. Vol. 645, pp. 125--153 (2010).

\bibitem{Na88_bubbling}
H.~Nakajima. \textit{Hausdorff Convergence of Einstein $4$-Manifolds}. 
J. Fac. Sci. Univ. Tokyo. Vol. 35, pp. 411--424 (1988).

\bibitem{Pe02_entropy}
G.~Perelman. \textit{The Entropy Formula for the Ricci Flow and its Geometric Applications}. 
Preprint, arXiv:math/0211159v1.

\bibitem{SU81_bubbling}
J.~Sacks and K.~Uhlenbeck. \textit{The Existence of Minimal Immersions of $2$-Spheres}. 
Ann. of Math. Vol. 113, pp. 1--24 (1981).

\bibitem{St84_bubbling}
M.~Struwe. \textit{A Global Compactness Result For Elliptic Boundary Value Problems Involving Limiting Nonlinearities}. 
Math. Z. Vol. 187, pp. 511--517 (1984).

\bibitem{SW68}
E. ~Stein and G. ~Weiss. \textit{Generalization of the Cauchy--Riemann Equations and Representations of the Rotation Group}. Amer. J. Math. Vol. 90, No. 1, pp. 163-196 (1968).

\bibitem{Ti90_calabi}
G.~Tian. \textit{On Calabi’s Conjecture for Complex Surfaces with Positive First Chern Class}.
Invent. Math. Vol. 101, No. 1, pp. 101--172 (1990).

\bibitem{U82_remove}
K.~Uhlenbeck. \textit{Removable Singularities in Yang--Mills Fields}. 
Comm. Math. Phys. Vol. 83, No. 1, pp. 11--29 (1982).

\bibitem{We11_compactness}
B.~Weber. \textit{Convergence of Compact Ricci Solitons}. 
Int. Math. Res. Not. IMRN No. 1, pp. 96--118 (2011).

\bibitem{WW09_comparison}
G.~Wei and W.~Wylie. \textit{Comparison Geometry for the Bakry--\'Emery Ricci Tensor}. 
J. Differential Geom. Vol. 83, pp. 377--405 (2009).

\bibitem{Z06_compactness}
X.~Zhang. \textit{Compactness Theorems for Gradient Ricci Solitons}. 
J. Geom. Phys. Vol. 83, No. 12, pp. 2481--2499 (2006).

\end{thebibliography}
\end{document}